\newtheorem{example}[theorem]{Example}
\newtheorem{remark}[theorem]{Remark}
\newtheorem{teo}[theorem]{Theorem}
\newtheorem{lem}[theorem]{Lemma}
\begin{document}

\title{Control Sets for Bilinear and Affine Systems}
\author{Fritz Colonius (corresponding author)\\Institut f\"{u}r Mathematik, Universit\"{a}t Augsburg, Augsburg, Germany
\and Alexandre J. Santana and Juliana Setti\\Departamento de Matem\'{a}tica, Universidade Estadual de Maring\'{a}\\Maring\'{a}, Brazil}
\maketitle

\today

\textbf{Abstract. }For homogeneous bilinear control systems, the control sets
are characterized using a Lie algebra rank condition for the induced systems
on projective space. This is based on a classical Diophantine approximation
result. For affine control systems, the control sets around the equilibria for
constant controls are characterized with particular attention to the question
when the control sets are unbounded.

\textbf{Keywords.} affine control systems, bilinear control systems, control
sets, Diophantine approximations

\textbf{MSC\ 2020.} 93B05, 34H05, 11D04

\section{Introduction}

We will study controllability properties of affine control systems of the
form
\begin{equation}
\dot{x}(t)=Ax(t)+\sum_{i=1}^{m}u_{i}(t)(B_{i}x(t)+c_{i})+d, \label{affine}%
\end{equation}
where $A,B_{1},\ldots,B_{m}\in\mathbb{R}^{n\times n}$ and $c_{1},\ldots
,c_{m},d$ are vectors in $\mathbb{R}^{n}$. The controls $u=(u_{1},\ldots
,u_{m})$ have values in a set $\Omega\subset\mathbb{R}^{m}$. The set of
admissible controls is $\mathcal{U}=\{u\in L^{\infty}(\mathbb{R}%
,\mathbb{R}^{m})\left\vert u(t)\in\Omega\text{ for almost all }t\right.  \}$
or the set $\mathcal{U}_{pc}$ of all piecewise constant functions defined on
$\mathbb{R}$ with values in $\Omega$.

Controllability properties of bilinear and affine control systems have been
intensely studied in the last 50 years. The classical monograph by Mohler
\cite{Mohler} contains sufficient conditions for complete controllability and
many applications of bilinear control systems. The monograph Elliott
\cite{Elliott} emphasizes the use of matrix Lie groups and Lie semigroups and
contains a wealth of results on the control of bilinear control systems.

Motivated by the Kalman criterion for controllability of linear systems, an
early goal was show that controllability of bilinear control systems (without
control restrictions) has an algebraic characterization. This hope did not
bear out, in spite of many partial results. The present paper is mainly
concerned with the analysis of control sets, that is, maximal subsets of
complete approximate controllability in $\mathbb{R}^{n}$, cf. Definition
\ref{def:Dset} and Colonius and Kliemann \cite{ColK00} for a general theory.

Concerning the literature on controllability properties of affine and bilinear
systems, many contributions are based on their analysis via the theory of
semigroups in Lie groups, this includes Boothby and Wilson \cite{BW}, Bonnard
\cite{Bonn81}, Jurdjevic and Kupka \cite{JurK81}, Gauthier and Bornard
\cite{GauB82}, Bonnard, Jurdjevic, Kupka, and Sallet \cite{BJKS}, Jurdjevic
and Sallet \cite{JurS84}, San Martin \cite{SanM93}.

The main result of Do Rocio, Santana, and Verdi \cite[Theorem 1.3]{DoRoSV09}
concerns a connected semigroup $S$ with nonvoid interior in an affine group
$G=B\rtimes V$, where $V$ is a finite dimensional vector space and $B$ is a
semisimple Lie group that acts transitively on $V\setminus\{0\}$. If the
linear action of the canonical projection $\pi(S)$ on $B$ is transitive on
$V\setminus\{0\}$, then the affine action of $S$ on $V$ is transitive. This
improves an earlier result in \cite{JurS84}. An application to an affine
control system of the form%
\begin{equation}
\dot{x}=Ax+a+uBx+ub\text{ with }u\in\mathbb{R}, \label{affine2}%
\end{equation}
where $A,B\in\mathfrak{sl}(2,\mathbb{R})$ and $a,b\in\mathbb{R}^{2}$, results
in a sufficient controllability criterion in terms of these parameters.

Answering a question by Sachkov \cite{Sach}, Do Rocio, San Martin, and Santana
\cite{DoRoSMS06} prove that systems of the form (\ref{affine2}) with $a=b=0$
and unrestricted control may not be completely controllable on $\mathbb{R}%
^{n}\setminus\{0\}$ while there is no nontrivial proper closed convex cone in
$\mathbb{R}^{n}$ which is positively invariant. For the relation to the
results in the present paper see Remark \ref{Remark_inv_cone} and also
Proposition \ref{Proposition5.14}.

Our results on control sets will also yield some results on controllability on
$\mathbb{R}^{n}$. We do not restrict our attention to the situation where the
system semigroup has nonvoid interior in the system group. Correspondingly,
our main results are not based on methods for semigroups in Lie groups.

In the first part of this paper we discuss control sets for homogeneous
bilinear systems which are a special case of (\ref{affine}) with $c_{1}%
=\dotsb=c_{m}=d=0$. It is well known that, for this class of systems, one can
separate controllability properties into properties concerning the angular
part on the unit sphere $\mathbb{S}^{n-1}$ and the radial part. In particular,
by Bacciotti and Vivalda \cite[Theorem 1]{BacV13} the induced system on
projective space $\mathbb{P}^{n-1}$ is controllable if and only if the induced
system on $\mathbb{S}^{n-1}$ is controllable.

Theorem \ref{Theorem_cones} shows that every control set $_{\mathbb{S}}D$ with
nonvoid interior on $\mathbb{S}^{n-1}$ induces a control set $D$ on
$\mathbb{R}^{n}\setminus\{0\}$ given by the cone generated by $_{\mathbb{S}}D$
provided that exponential growth and decay can be achieved. Here we use a
classical result on Diophantine approximations which allows us to require only
the accessibility rank condition on $\mathbb{S}^{n-1}$ in the interior of
$_{\mathbb{S}}D$. This result is illustrated by two-dimensional examples. For
systems satisfying the accessibility rank condition on projective space, the
control sets on the unit sphere and on $\mathbb{R}^{n}\setminus\{0\}$ are
characterized in Theorem \ref{Theorem_sphere}\ and Theorem \ref{Theorem_95},
respectively. We remark that under the accessibility rank condition in
$\mathbb{R}^{2}$, a complete description of the control sets and of
controllability is given in Ayala, Cruz, Kliemann, and Laura-Guarachi
\cite{ACKL16}. Corollary \ref{Corollary_approximate} characterizes
controllability on $\mathbb{R}^{n}\setminus\{0\}$ for systems satisfying only
the accessibility rank condition on $\mathbb{P}^{n-1}$ using a recent result
by Cannarsa and Sigalotti \cite[Theorem 1]{CanS21} which shows that here
approximate controllability implies controllability.

In the second part we analyze control sets for general affine systems and
their relation to equilibria. If the systems linearized about equilibria are
controllable, Theorem \ref{Theorem3} shows that any pathwise connected set of
equilibria is contained in a control set. Additional assumptions on spectral
properties of the matrices $A(u)=A+\sum_{i=1}^{m}u_{i}B_{i},~u\in\Omega$,
allow us to get more detailed information. In particular, if $0$ is an
eigenvalue of $A(u^{0})$ for some $u^{0}\in\Omega$, one finds an unbounded
control set, cf. Theorem \ref{Theorem_unbounded}. The main open problem for
control sets of affine systems is, if every control set contains an equilibrium.

The contents of this paper are as follows. Section \ref{Section2} describes
basic properties of nonlinear control systems and control sets as well as some
notation for bilinear and affine control systems. Section \ref{Section3}
discusses homogeneous bilinear control systems using their projection to the
unit sphere. Section \ref{Section4} briefly describes equilibria of affine
systems and Section \ref{Section5} presents results on control sets around
such equilibria.

\section{Preliminaries\label{Section2}}

In this section we introduce some terminology and notations for control-affine
systems and discuss special cases of affine control systems.

\subsection{Control sets}

Control-affine systems on a smooth manifold $M$ have the form%
\begin{align}
\dot{x}(t)  &  =f_{0}(x(t))+\sum_{i=1}^{m}u_{i}(t)f_{i}(x(t)),\label{3.1}\\
u  &  \in\mathcal{U}:=\left\{  u\in L^{\infty}(\mathbb{R},\mathbb{R}%
^{m})\left\vert u(t)\in\Omega\,\text{for\ almost all}\,t\in\mathbb{R}\right.
\right\}  ,\nonumber
\end{align}
where $f_{0},f_{1},\ldots,f_{m}$ are smooth vector fields on $M$ and the
control range $\Omega\subset\mathbb{R}^{m}$ is compact with $0\in
\mathrm{int}\left(  \Omega\right)  $. We assume that for every initial state
$x\in M$ and every control function $u\in\mathcal{U}$ there exists a unique
solution $\varphi(t,x,u),t\in\mathbb{R}$, satisfying $\varphi(0,x,u)=x$ of
(\ref{3.1}) depending continuously on $x$. The system with $u\equiv0$ given by%
\begin{equation}
\dot{x}(t)=f_{0}(x(t)) \label{uncontrolled}%
\end{equation}
is called the uncontrolled system. It generates a continuous flow $\varphi
_{t}$ on $M$. For the general theory of nonlinear control systems we refer to
Sontag \cite{Son98} and Jurdjevic \cite{Jurd97}.

The set of points reachable from $x\in M$ and controllable to $x\in M$ up to
time $T>0$ are defined by
\begin{align*}
{\mathcal{O}}_{\leq T}^{+}(x)  &  :=\{y\in M\left\vert \text{there are}\;0\leq
t\leq T\;\text{and}\;u\in\mathcal{U}\;\text{with}\;y=\varphi(t,x,u)\right.
\},\\
{\mathcal{O}}_{\leq T}^{-}(x)  &  :=\{y\in M\left\vert \text{there are}\;0\leq
t\leq T\;\text{and}\;u\in\mathcal{U}\;\text{with}\;x=\varphi(t,y,u)\right.
\},
\end{align*}
resp. Furthermore, the reachable set (or \textquotedblleft positive
orbit\textquotedblright) from $x$ and the set controllable to $x$ (or
\textquotedblleft negative orbit\textquotedblright\ of $x$) are%
\[
\mathcal{O}^{+}(x)=\bigcup\nolimits_{T>0}O_{\leq T}^{+}(x),\quad
\mathcal{O}^{-}(x)=\bigcup\nolimits_{T>0}O_{\leq T}^{-}(x),
\]
resp. The system is called locally accessible in $x$, if $\mathcal{O}_{\leq
T}^{+}(x)$ and $\mathcal{O}_{\leq T}^{-}(x)$ have nonvoid interior for all
$T>0$ and the system is called locally accessible if this holds in every point
$x\in M$. This is guaranteed by the following accessibility rank condition%
\begin{equation}
\dim\mathcal{LA}\left\{  f_{0},f_{1},\ldots,f_{m}\right\}  (x)=\dim M\text{
for all }x\in M; \label{ARC}%
\end{equation}
here $\mathcal{LA}\left\{  f_{0},f_{1},\ldots,f_{m}\right\}  (x)$ is the
subspace of the tangent space $T_{x}M$ corresponding to the vector fields,
evaluated in $x$, in the Lie algebra generated by $f_{0},f_{1},\ldots,f_{m}$.

The trajectories for the convex hull of $\Omega$ can be uniformly approximated
on bounded intervals by the trajectories for $\Omega$. Furthermore,
trajectories for controls in $\mathcal{U}$ can be uniformly approximated on
bounded intervals by trajectories for piecewise constant controls in
$\mathcal{U}_{pc}$.

The following definition introduces subsets of complete approximate
controllability which are of primary interest in the present paper.

\begin{definition}
\label{def:Dset}A nonvoid set $D\subset M$ is called a control set of system
(\ref{3.1}) if it has the following properties: (i) for all $x\in D$ there is
a control function $u\in\mathcal{U}$ such that $\varphi(t,x,u)\in D$ for all
$t\geq0$, (ii) for all $x\in D$ one has $D\subset\overline{\mathcal{O}^{+}%
(x)}$, and (iii) $D$ is maximal with these properties, that is, if $D^{\prime
}\supset D$ satisfies conditions (i) and (ii), then $D^{\prime}=D$.

A control set $D\subset M$ is called an invariant control set if $\overline
{D}=\overline{\mathcal{O}^{+}(x)}$ for all $x\in D$. All other control sets
are called variant.
\end{definition}

If the intersection of two control sets is nonvoid, the maximality property
(ii) implies that they coincide. If the system is locally accessible in all
$x\in\mathrm{int}\left(  D\right)  $, then $\mathrm{int}\left(  D\right)
\subset\mathcal{O}^{+}(x)$ for all $x\in D$ and $D=\mathcal{O}^{-}%
(x)\cap\overline{\mathcal{O}^{+}(x)}$ for every $x\in\mathrm{int}\left(
D\right)  $. The control sets with nonvoid interior for piecewise constant
controls in $\mathcal{U}_{pc}$ coincide with those for controls in
$\mathcal{U}$. For these and further properties of control sets, we refer to
Colonius and Kliemann \cite[Chapters 3 and 4]{ColK00}.

The following lemma shows that the controllable set of system (\ref{affine})
coincides with the reachable set of the time reversed system.

\begin{lem}
\label{Lemma_time_reversal}Consider together with system (\ref{3.1}) the time
reversed system
\begin{equation}
\dot{x}(t)=-f_{0}(x(t))-\sum_{i=1}^{m}v_{i}(t)f_{i}(x(t)),\quad v\in
\mathcal{U}. \label{reversed}%
\end{equation}
We denote by $\mathcal{O}_{1}^{+}(x)$ and $\mathcal{O}_{1}^{-}(x)$ the
reachable set from $x$ and the controllable set to $x,$ determined by the
system (\ref{3.1}), respectively, and by $\mathcal{O}_{2}^{+}(x)$ and
$\mathcal{O}_{2}^{-}(x)$ the reachable set from $x$ and the controllable set
to $x,$ determined by the system (\ref{reversed}), respectively. Then
$\mathcal{O}_{1}^{+}(x)=\mathcal{O}_{2}^{-}(x)$ and $\mathcal{O}_{1}%
^{-}(x)=\mathcal{O}_{2}^{+}(x)$.
\end{lem}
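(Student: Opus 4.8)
The plan is to exploit the exact symmetry between the two systems: reflecting time in a trajectory of (\ref{3.1}) turns it into a trajectory of (\ref{reversed}), and conversely. To prove the inclusion $\mathcal{O}_{1}^{+}(x)\subset\mathcal{O}_{2}^{-}(x)$ I would start from a point $y\in\mathcal{O}_{1}^{+}(x)$, so that $y=\varphi(t,x,u)$ for some $t\geq0$ and $u\in\mathcal{U}$, where $\varphi$ denotes the solution of (\ref{3.1}). I then define the reflected trajectory $\xi(s):=\varphi(t-s,x,u)$ for $s\in[0,t]$, which satisfies $\xi(0)=y$ and $\xi(t)=x$.

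The key computation is to differentiate $\xi$. Using the chain rule together with the fact that $s\mapsto\varphi(s,x,u)$ solves (\ref{3.1}), one obtains for almost all $s$
\[
\dot{\xi}(s)=-f_{0}(\xi(s))-\sum_{i=1}^{m}u_{i}(t-s)f_{i}(\xi(s)).
\]
Setting $v_{i}(s):=u_{i}(t-s)$, the function $v=(v_{1},\ldots,v_{m})$ again lies in $\mathcal{U}$, since the substitution $s\mapsto t-s$ preserves measurability, essential boundedness, and the pointwise constraint $v(s)\in\Omega$ for almost all $s$. Thus $\xi$ is precisely the solution of the time reversed system (\ref{reversed}) for the control $v$ with initial value $\xi(0)=y$. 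Since $\xi(t)=x$, this means that $x$ is reached from $y$ at time $t$ along (\ref{reversed}), that is, $y\in\mathcal{O}_{2}^{-}(x)$. This proves $\mathcal{O}_{1}^{+}(x)\subset\mathcal{O}_{2}^{-}(x)$.

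For the converse inclusion I would observe that the time reversed system of (\ref{reversed}) is again (\ref{3.1}), so the roles of the two systems are symmetric. Applying the inclusion just established with the two systems interchanged gives $\mathcal{O}_{2}^{+}(x)\subset\mathcal{O}_{1}^{-}(x)$ and, after relabeling, $\mathcal{O}_{2}^{-}(x)\subset\mathcal{O}_{1}^{+}(x)$, which yields the first identity $\mathcal{O}_{1}^{+}(x)=\mathcal{O}_{2}^{-}(x)$. The second identity $\mathcal{O}_{1}^{-}(x)=\mathcal{O}_{2}^{+}(x)$ follows by the identical argument with the roles of reachable and controllable sets exchanged, or simply by applying the first identity to the reversed system. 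I do not expect a genuine obstacle here: the only points that warrant explicit mention are that the reflected control $v(s)=u(t-s)$ is again admissible, i.e. that $s\mapsto t-s$ maps $\mathcal{U}$ into itself, and that the differentiation of $\xi$ is carried out in the almost-everywhere Carathéodory sense appropriate for $L^{\infty}$ controls, so that $\xi$ is indeed the unique solution of (\ref{reversed}) with the stated initial value.
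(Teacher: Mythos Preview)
Your proof is correct and follows essentially the same approach as the paper: both take a point $y=\varphi(T,x,u)$ in $\mathcal{O}_{1}^{+}(x)$, form the time-reflected trajectory $s\mapsto\varphi(T-s,x,u)$, differentiate to verify it solves the reversed system with control $v(s)=u(T-s)$, and then note that the remaining inclusions follow by symmetry. Your version is slightly more explicit about why the reflected control stays in $\mathcal{U}$ and about the Carath\'{e}odory sense of the differentiation, which is a welcome clarification but not a different argument.
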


\begin{proof}
For $y=\varphi(T,x,u)\in\mathcal{O}_{1}^{+}(x)$, the absolutely continuous
function $\psi(t):=\allowbreak\varphi(T-t,x,u(T-\cdot)),t\in\lbrack0,T]$,
satisfies $\psi(0)=y,\psi(T)=x$. It is a solution of (\ref{reversed}) with
$v(t):=u(T-t),t\in\lbrack0,T]$, since for almost all $t\in\lbrack0,T]$%
\begin{align*}
&  \dot{\psi}(t)=\frac{d}{dt}\varphi(T-t,y,u(T-\cdot))\\
&  =-f_{0}(\varphi(T-t,y,u(T-\cdot)))-\sum_{i=1}^{m}u_{i}(T-t)f_{i}%
(\varphi(T-t,y,u(T-\cdot))\\
&  =-f_{0}(\psi(t))-\sum_{i=1}^{m}v_{i}(t)f_{i}(\psi(t)).
\end{align*}
Thus $\mathcal{O}_{1}^{+}(x)\subset\mathcal{O}_{2}^{-}(x)$. The other
inclusions follow analogously.
\end{proof}

\subsection{Affine and bilinear control systems}

Frequently, we abbreviate%
\begin{equation}
A(u):=A+\sum_{i=1}^{m}u_{i}B_{i}\text{ for }u\in\Omega\text{ and }%
C:=(c_{1},\ldots,c_{m})\in\mathbb{R}^{n\times m}, \label{A(u)}%
\end{equation}
hence the columns of $C$ are given by the $c_{i}$. Then (\ref{affine}) can be
written as%
\[
\dot{x}(t)=A(u(t))x(t)+Cu(t)+d.
\]
A special case are bilinear control systems obtained for $d=0$, i.e.
\begin{equation}
\dot{x}(t)=Ax(t)+\sum_{i=1}^{m}u_{i}(t)(B_{i}x(t)+c_{i})=A(u(t))x(t)+Cu(t),
\label{bilinear}%
\end{equation}
and homogeneous bilinear systems of the form%
\begin{equation}
\dot{x}(t)=Ax(t)+\sum_{i=1}^{m}u_{i}(t)B_{i}x(t)=A(u(t))x(t).
\label{bilinear_h}%
\end{equation}
For fixed control $u\in\mathcal{U}$ (\ref{affine}) is a nonautonomous
inhomogeneous linear differential equation. Denote by $\Phi_{u}(t,s)\in
\mathbb{R}^{n\times n}$ the principal matrix solution, i.e., the solution of
\[
\frac{d}{dt}\Phi_{u}(t,s)=A(u(t))\Phi_{u}(t,s),\quad\Phi_{u}(s,s)=I.
\]
The solutions $\varphi(t,x_{0},u),t\in\mathbb{R}$, of (\ref{affine}) with
initial condition $\varphi(0,x_{0},u)=x_{0}\in\mathbb{R}^{n}$ are given by%
\[
\varphi(t,x_{0},u)=\Phi_{u}(t,0)x_{0}+\int_{0}^{t}\Phi_{u}%
(t,s)[Cu(s)+d]ds,\quad t\in\mathbb{R},
\]
and, in particular, the solutions of (\ref{bilinear_h}) are%
\[
\varphi(t,x_{0},u)=\Phi_{u}(t,0)x_{0},\quad t\in\mathbb{R}.
\]
This readily implies for $\alpha\in\mathbb{R}$%
\begin{equation}
\varphi(t,\alpha x_{0},u)=\Phi_{u}(t,0)\alpha x_{0}=\alpha\varphi(t,x_{0},u).
\label{homogene}%
\end{equation}

\section{Control sets for homogeneous bilinear systems\label{Section3}}

We consider homogeneous bilinear control systems of the form (\ref{bilinear_h}%
) and describe their control sets.

Since for fixed control $u$, the corresponding differential equations are
homogeneous, their controllability properties can often be split into
controllability properties for the angles and the radii separately; cf., e.g.,
Colonius and Kliemann \cite[Chapter 7]{ColK00}. Denote the projection of
$\mathbb{R}^{n}$ to the Euclidean unit sphere $\mathbb{S}^{n-1}$ by $\pi$ and
the projection to real projective space $\mathbb{P}^{n-1}$ (obtained by
identifying opposite points on the sphere) by $\mathbb{P}$. For a trajectory
of (\ref{bilinear_h}) define
\[
s(t):=\pi(x(t))=\frac{x(t)}{\left\Vert x(t)\right\Vert },\quad t\in\mathbb{R}.
\]
The projected trajectories are trajectories of control-affine systems on
$\mathbb{S}^{n-1}$ given by%
\begin{align}
\dot{s}(t) &  =h(u(t),s(t))=h_{0}(s(t))+\sum_{i=1}^{m}u_{i}(t)h_{i}%
(s(t)),\label{sphere}\\
h_{0}(s) &  =As-s^{\top}As\cdot s,\quad h_{i}(s)=B_{i}s-s^{\top}B_{i}s\cdot
s\text{ for }i=1,\ldots,m.\nonumber
\end{align}
The vector fields of the system on $\mathbb{S}^{n-1}$ are obtained by
subtracting the radial component. The solutions will be denoted by
$s(t,s_{0},u),t\in\mathbb{R}$. One also obtains an induced control system on
projective space $\mathbb{P}^{n-1}$ with vector fields $\mathbb{P}h(u,\cdot)$
since $h_{i}(s)=-h_{i}(-s)$ for all $i$.

Since bilinear control systems as well as their projections to $\mathbb{S}%
^{n-1}$ and $\mathbb{P}^{n-1}$ are analytic, for these systems, local
accessibility is equivalent to the corresponding accessibility rank condition
(\ref{ARC}); cf. Sontag \cite[Theorem 12 on p. 179]{Son98}.

We note the following simple result showing a first relation between control
sets on $\mathbb{R}^{n}$ and control sets on $\mathbb{S}^{n-1}$.

\begin{proposition}
Suppose that $D\subset\mathbb{R}^{n}\setminus\{0\}$ is a control set of system
(\ref{bilinear_h}). Then the projection $\mathbb{P}(D)$ to projective space
$\mathbb{P}^{n-1}$ is contained in a control set $_{\mathbb{P}}D$ for the
induced system on $\mathbb{P}^{n-1}$, and the projection $\pi(D)$ to the unit
sphere $\mathbb{S}^{n-1}$ is contained in a control set $_{\mathbb{S}}D$ for
the induced system (\ref{sphere}) on $\mathbb{S}^{n-1}$. If $D$ has nonvoid
interior, then also $_{\mathbb{P}}D$ and $\,_{\mathbb{S}}D$ have nonvoid interiors.
\end{proposition}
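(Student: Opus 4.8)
The plan is to exploit the homogeneity relation (\ref{homogene}), which shows that the radial projection $\pi$ intertwines the flow of (\ref{bilinear_h}) with the flow of the induced system (\ref{sphere}); the argument for $\mathbb{P}$ is word-for-word the same, so I describe only the case of $\mathbb{S}^{n-1}$. Concretely, for every $x\in\mathbb{R}^{n}\setminus\{0\}$, every $t$ and every $u\in\mathcal{U}$ one has $\pi(\varphi(t,x,u))=s(t,\pi(x),u)$, which is exactly the assertion that the projected curve $s(t)=\pi(x(t))$ solves (\ref{sphere}). Taking the union over $t\geq0$ and $u\in\mathcal{U}$ then gives $\pi(\mathcal{O}^{+}(x))=\mathcal{O}_{\mathbb{S}}^{+}(\pi(x))$, where $\mathcal{O}_{\mathbb{S}}^{+}$ denotes the reachable set of the induced system on $\mathbb{S}^{n-1}$.

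First I would check that $E:=\pi(D)$ satisfies conditions (i) and (ii) of Definition \ref{def:Dset} on $\mathbb{S}^{n-1}$. For (i), given $s=\pi(x)\in E$ with $x\in D$, choose by property (i) of $D$ a control $u$ with $\varphi(t,x,u)\in D$ for all $t\geq0$; applying $\pi$ and the intertwining relation yields $s(t,s,u)=\pi(\varphi(t,x,u))\in\pi(D)=E$ for all $t\geq0$. For (ii), take $s_{1}=\pi(x_{1})$ and $s_{2}=\pi(x_{2})$ in $E$ with $x_{1},x_{2}\in D$; property (ii) of $D$ gives $x_{2}\in\overline{\mathcal{O}^{+}(x_{1})}$, and continuity of $\pi$ together with the intertwining relation yields $s_{2}\in\pi(\overline{\mathcal{O}^{+}(x_{1})})\subset\overline{\pi(\mathcal{O}^{+}(x_{1}))}=\overline{\mathcal{O}_{\mathbb{S}}^{+}(s_{1})}$. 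Hence $E\subset\overline{\mathcal{O}_{\mathbb{S}}^{+}(s_{1})}$ for every $s_{1}\in E$.

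Next I would promote $E$ to a control set. Consider the family of all subsets of $\mathbb{S}^{n-1}$ that contain $E$ and satisfy (i) and (ii); it is nonempty since $E$ belongs to it, and one checks directly that the union of any chain again contains $E$ and satisfies (i) and (ii) (for (ii), any two points of the union lie in a common member of the chain, and for (i) any point of the union lies in a member to which property (i) applies). By Zorn's lemma there is a maximal such set $_{\mathbb{S}}D$, and maximality within this family is the same as maximality among all sets satisfying (i) and (ii), since any larger set with these properties would still contain $E$. Thus $_{\mathbb{S}}D$ is a control set of (\ref{sphere}) with $\pi(D)\subset{}_{\mathbb{S}}D$, and the identical construction with $\mathbb{P}$ in place of $\pi$ produces a control set $_{\mathbb{P}}D\supset\mathbb{P}(D)$.

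Finally, for the interior statement I would use that $\pi\colon\mathbb{R}^{n}\setminus\{0\}\to\mathbb{S}^{n-1}$ is an open map, being the first-factor projection under the diffeomorphism $x\mapsto(x/\|x\|,\|x\|)$ identifying $\mathbb{R}^{n}\setminus\{0\}$ with $\mathbb{S}^{n-1}\times(0,\infty)$. If $\mathrm{int}(D)\neq\varnothing$, then $\pi(\mathrm{int}(D))$ is a nonvoid open subset of $\mathbb{S}^{n-1}$ contained in $\pi(D)\subset{}_{\mathbb{S}}D$, so $_{\mathbb{S}}D$ has nonvoid interior, and likewise for $_{\mathbb{P}}D$. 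The proposition is essentially soft: the only step requiring genuine care is the closure step in verifying (ii), where the closure must be passed through the continuous map $\pi$, while the remaining assertions are formal consequences of the intertwining relation (\ref{homogene}) and the openness of $\pi$, so I do not anticipate a serious obstacle.
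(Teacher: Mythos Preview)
Your proof is correct and follows exactly the approach the paper has in mind: the paper's own proof consists of the single sentence ``The assertions immediately follow from the definitions and the fact that the projections $\pi$ and $\mathbb{P}$ are open,'' and you have simply unpacked that sentence carefully, verifying properties (i) and (ii) via the intertwining relation, extending to a maximal set by Zorn, and invoking openness of $\pi$ for the interior claim. No discrepancy.
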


\begin{proof}
The assertions immediately follow from the definitions and the fact that the
projections $\pi$ and $\mathbb{P}$ are open.
\end{proof}

Next we will analyze when a control set on the unit sphere $\mathbb{S}^{n-1}$
generates a control set on $\mathbb{R}^{n}$. This result is based on a
Diophantine approximation result used for Lemma \ref{Lemma_dio}.

\begin{theorem}
\label{Theorem_cones}Let $_{\mathbb{S}}D$ be a control set with nonvoid
interior for the system on the unit sphere $\mathbb{S}^{n-1}$ and suppose that

(i) every point in $\mathrm{int}\left(  _{\mathbb{S}}D\right)  $ is locally acessible;

(ii) there are $\alpha_{0}^{+}>1,~\delta_{0}>0$, and $\alpha^{-}\in(0,1)$ such
that for all $\alpha^{+}\in(\alpha_{0}^{+},\alpha_{0}^{+}+\delta_{0})$ there
are points $s^{+},s^{-}\in\mathrm{int}\left(  _{\mathbb{S}}D\right)  $,
controls $u^{+},u^{-}\in\mathcal{U}$, and times $\sigma^{+},\sigma^{-}>0$ with%
\begin{equation}
\varphi(\sigma^{+},s^{+},u^{+})=\alpha^{+}s^{+},\quad\varphi(\sigma^{-}%
,s^{-},u^{-})=\alpha^{-}s^{-}. \label{R1}%
\end{equation}
Then the cone $\{\alpha s\in\mathbb{R}^{n}\left\vert \alpha>0,s\in
\,_{\mathbb{S}}D\right.  \}$ is a control set in $\mathbb{R}^{n}$ with nonvoid interior.
\end{theorem}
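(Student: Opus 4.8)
The plan is to verify the three defining properties of a control set from Definition \ref{def:Dset} for the open cone $D=\{\alpha s\mid\alpha>0,\ s\in{}_{\mathbb{S}}D\}$, exploiting throughout the homogeneity identity (\ref{homogene}) and the separation of angular and radial motion. Writing a solution of (\ref{bilinear_h}) as $\varphi(t,x_0,u)=r(t)s(t)$ with $s(t)=s(t,\pi(x_0),u)$ the induced trajectory on $\mathbb{S}^{n-1}$, one computes from $\tfrac{d}{dt}\tfrac12\|x\|^2=x^\top A(u)x$ that $\dot r=\big(s^\top A(u)s\big)r$, hence
\[
r(t)=r(0)\exp\!\Big(\int_0^t s(\tau)^\top A(u(\tau))s(\tau)\,d\tau\Big)>0 .
\]
Thus the radial growth factor along any trajectory depends only on the projected trajectory and the control, is always positive, and multiplies under concatenation; this is exactly what decouples the angular bookkeeping from the radial one. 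Nonvoid interior of $D$ is immediate, since under the polar diffeomorphism $(0,\infty)\times\mathbb{S}^{n-1}\to\mathbb{R}^n\setminus\{0\}$ the cone over the nonvoid open set $\operatorname{int}({}_{\mathbb{S}}D)$ is open.

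For the invariance property (i) I would simply lift the sphere control set: given $x\in D$ with $\pi(x)=s\in{}_{\mathbb{S}}D$, property (i) for ${}_{\mathbb{S}}D$ furnishes a control $u$ with $s(t,s,u)\in{}_{\mathbb{S}}D$ for all $t\ge0$; since $r(t)>0$, we get $\varphi(t,x,u)=r(t)\,s(t,s,u)\in D$ for all $t\ge0$.

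The real work is the approximate reachability (ii), i.e. $D\subset\overline{\mathcal{O}^+(x)}$ for every $x=\alpha_1 s_1\in D$; fix a target $y=\alpha_2 s_2\in D$ and $\varepsilon>0$. Because every interior point of ${}_{\mathbb{S}}D$ is reachable \emph{exactly} from any point of ${}_{\mathbb{S}}D$ (the orbit property recalled after Definition \ref{def:Dset}, available by the local accessibility hypothesis (i)), I first steer on the sphere $s_1\rightsquigarrow s^+\rightsquigarrow s^-$, hitting $s^+,s^-$ exactly, and then $s^-\rightsquigarrow s_2$ to accuracy $\varepsilon'$. These three segments fix positive radial factors $\gamma_1,\gamma_2,\gamma_3$ through the formula above. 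Now inserting $k$ copies of the growth loop $u^+$ at $s^+$ and $l$ copies of the decay loop $u^-$ at $s^-$ — which by (\ref{homogene}) and (\ref{R1}) satisfy $\varphi(\sigma^+,\rho s^+,u^+)=\rho\alpha^+s^+$ and $\varphi(\sigma^-,\rho s^-,u^-)=\rho\alpha^-s^-$, scaling the current radius while returning the angular part to $s^+$, resp. $s^-$ — the concatenated trajectory ends with angular part within $\varepsilon'$ of $s_2$ and radius
\[
\alpha_1\,\gamma_1\gamma_2\gamma_3\,(\alpha^+)^k(\alpha^-)^l .
\]
With $\gamma_1,\gamma_2,\gamma_3$ already fixed, it remains to choose $k,l\in\mathbb{N}$ and $\alpha^+\in(\alpha_0^+,\alpha_0^++\delta_0)$ so that $(\alpha^+)^k(\alpha^-)^l$ realizes the prescribed ratio $\alpha_2/(\alpha_1\gamma_1\gamma_2\gamma_3)$; this is precisely Lemma \ref{Lemma_dio}, whose Diophantine input crucially uses that $\alpha^+$ may be tuned within an interval (equivalently, that $\{k\log\alpha^++l\log\alpha^-\}$ covers every prescribed value once $k$ is large). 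Taking $\varepsilon'<\varepsilon/\alpha_2$ places the endpoint within $\varepsilon$ of $y$, proving (ii).

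Finally, for maximality (iii) I argue by projection. Let $\tilde D\supseteq D$ satisfy (i)–(ii), let $y\in\tilde D$, and fix $x_0\in\operatorname{int}(D)$ with $s_0=\pi(x_0)\in\operatorname{int}({}_{\mathbb{S}}D)$. Applying (ii) for $\tilde D$ at $x_0$ and at $y$ gives $y\in\overline{\mathcal{O}^+(x_0)}$ and $x_0\in\overline{\mathcal{O}^+(y)}$. Projecting and using local accessibility on $\mathbb{S}^{n-1}$ to upgrade approximate reachability of the interior point $s_0$ to exact reachability (so that $\pi(y)\in\mathcal{O}^-_{\mathbb{S}}(s_0)$), the characterization ${}_{\mathbb{S}}D=\mathcal{O}^-_{\mathbb{S}}(s_0)\cap\overline{\mathcal{O}^+_{\mathbb{S}}(s_0)}$ recorded after Definition \ref{def:Dset} yields $\pi(y)\in{}_{\mathbb{S}}D$, hence $y\in D$; thus $\tilde D=D$. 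I expect the step packaged in Lemma \ref{Lemma_dio} to be the main obstacle, together with the careful tracking of the uncontrolled radial factors $\gamma_i$ accumulated while connecting $s_1,s^+,s^-,s_2$ on the sphere; the remaining steps reduce to homogeneity and standard control set facts.
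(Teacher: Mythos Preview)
Your overall architecture matches the paper's proof: lift invariance from the sphere, obtain approximate reachability by inserting growth/decay loops at $s^{+},s^{-}$ between connecting arcs on $\mathbb{S}^{n-1}$, and deduce maximality by projecting back to ${}_{\mathbb{S}}D$. The invariance and maximality arguments are fine and essentially identical to the paper's Step~3.

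There is, however, a genuine gap in your reachability step, precisely at the point where you invoke Lemma~\ref{Lemma_dio}. You write that with $\gamma_{1},\gamma_{2},\gamma_{3}$ \emph{already fixed} you will then choose $k,l\in\mathbb{N}$ \emph{and} $\alpha^{+}\in(\alpha_{0}^{+},\alpha_{0}^{+}+\delta_{0})$ so that $(\alpha^{+})^{k}(\alpha^{-})^{l}$ hits the prescribed ratio, and you describe the lemma as exploiting that ``$\alpha^{+}$ may be tuned within an interval''. This is circular: hypothesis~(ii) only guarantees that \emph{for each} $\alpha^{+}$ there exist $s^{+},s^{-}$, and these may change with $\alpha^{+}$. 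Since your connecting arcs $s_{1}\rightsquigarrow s^{+}$ and $s^{+}\rightsquigarrow s^{-}$ depend on $s^{+}$, the radial factors $\gamma_{1},\gamma_{2}$ are functions of $\alpha^{+}$; you cannot freeze them and then move $\alpha^{+}$. Moreover, Lemma~\ref{Lemma_dio} is not a tuning statement at all: it applies to \emph{fixed} $a,b$ with $\log b/\log a$ irrational and produces $k,l$ with $|a^{k}b^{-l}-c|<\varepsilon$.

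The correct order of choices, which is what the paper does, is: first use the interval $(\alpha_{0}^{+},\alpha_{0}^{+}+\delta_{0})$ once, to select a single $\alpha^{+}$ for which $\dfrac{-\log\alpha^{-}}{\log\alpha^{+}}$ is irrational (only countably many values of $\alpha^{+}$ fail this). With that $\alpha^{+}$ now fixed, hypothesis~(ii) supplies $s^{+},s^{-},u^{\pm},\sigma^{\pm}$, the connecting arcs and hence $\gamma_{1},\gamma_{2},\gamma_{3}$ are determined, and Lemma~\ref{Lemma_dio} then furnishes $k,l$ making $(\alpha^{+})^{k}(\alpha^{-})^{l}$ \emph{approximately} equal to the target ratio. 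Combined with the angular error $\varepsilon'$ on the last arc this gives an endpoint $\varepsilon$-close to $y$. Once you reorder the choices this way, your proof goes through and coincides with the paper's.
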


\begin{remark}
The proof of Theorem \ref{Theorem_cones} will show that we can replace
assumption (ii) by the following assumption:

(ii)' there are $\alpha^{+}>1,\delta_{0}\in(0,1)$, and $\alpha_{0}^{-}%
\in(0,1-\delta_{0})$ such that for all $\alpha^{-}\in(\alpha_{0}^{-}%
,\alpha_{0}^{-}+\delta_{0})$ there are points $s^{+},s^{-}\in\mathrm{int}%
\left(  _{\mathbb{S}}D\right)  $, controls $u^{+},u^{-}\in\mathcal{U}$, and
times $\sigma^{+},\sigma^{-}>0$ with (\ref{R1}).
\end{remark}

\begin{proof}
First observe that (\ref{R1}) implies for the projected system on
$\mathbb{S}^{n-1}$%
\[
s(\sigma^{+},s^{+},u^{+})=s^{+},\quad s(\sigma^{-},s^{-},u^{-})=s^{-}.
\]
Hence we get periodic solutions in $\mathrm{int}\left(  _{\mathbb{S}}D\right)
\subset\mathbb{S}^{n-1.}$.

\textbf{Step 1:} Let $s_{0}\in\mathrm{int}\left(  _{\mathbb{S}}D\right)  $.
Then for every $x_{0}\in l:=\{\alpha s_{0}\in\mathbb{R}^{n}\left\vert
\alpha>0\right.  \}$ the closure of the reachable set from $x_{0}$ contains
the half-line $l$.

For the proof of this claim, consider arbitrary points $x_{0}=\alpha_{0}%
s_{0},x_{1}=\alpha_{1}s_{0}\in l$ with $\alpha_{0},\alpha_{1}>0$. The strategy
is to steer the system from $s_{0}$ to $s^{+}$, then to go $k$ times through
the periodic trajectory for $u^{+}$, then to steer the system to $s^{-}$, go
$\ell$ times through the periodic trajectory for $u^{-}$, and finally steer
the system back to $s_{0}$. The numbers $k,\ell\in\mathbb{N}$ will be adjusted
such that the corresponding trajectories in $\mathbb{R}^{n}$ starting in
$x_{0}$ approach $x_{1}$.

By local accessibility in $\mathrm{int}\left(  _{\mathbb{S}}D\right)  $ there
are times $\tau_{1},\tau_{2},\tau_{3}>0$ and controls $v^{1},v^{2},\allowbreak
v^{3}\in\mathcal{U}$ with%
\[
s(\tau_{1},s_{0},v^{1})=s^{+},\quad s(\tau_{2},s^{+},v^{2})=s^{-},\quad
s(\tau_{3},s^{-},v^{3})=s_{0}.
\]
One finds for the system in $\mathbb{R}^{n}$ numbers $\beta_{1},\beta
_{2},\beta_{3}>0$ with%
\[
\varphi(\tau_{1},x_{0},v^{1})=\varphi(\tau_{1},\alpha_{0}s_{0},v^{1}%
)=\beta_{1}s^{+},~\varphi(\tau_{2},s^{+},v^{2})=\beta_{2}s^{-},~\varphi
(\tau_{3},s^{-},v^{3})=\beta_{3}s_{0}.
\]
Now define for $k,\ell\in\mathbb{N}$ a control function $w^{k,\ell}$ by%
\begin{align*}
w^{k,\ell}(t)  &  =v^{1}(t)\text{ for }t\in\lbrack0,\tau_{1}],\\
w^{k,\ell}(t)  &  =u^{+}(t-(\tau_{1}+(i-1)\sigma^{+}))\text{ for }t\in
(\tau_{1}+(i-1)\sigma^{+},\tau_{1}+i\sigma^{+}],~i=1,\ldots,k,\\
w^{k,\ell}(t)  &  =v^{2}(t-(\tau_{1}+k\sigma^{+}))\text{ for }t\in(\tau
_{1}+k\sigma^{+},\tau_{1}+k\sigma^{+}+\tau_{2}],\\
w^{k,\ell}(t)  &  =u^{-}(t-(\tau_{1}+k\sigma^{+}+\tau_{2}+(i-1)\sigma^{-}))\\
\text{for }t  &  \in(\tau_{1}+k\sigma^{+}+\tau_{2}+(i-1)\sigma^{-},\tau
_{1}+k\sigma^{+}+\tau_{2}+i\sigma^{-}],\quad i=1,\ldots,\ell,\\
w^{k,\ell}(t)  &  =v^{3}(t-(\tau_{1}+k\sigma^{+}+\tau_{2}+\ell\sigma^{-})),\\
\text{for }t  &  \in(\tau_{1}+k\sigma^{+}+\tau_{2}+\ell\sigma^{-},\tau
_{1}+k\sigma^{+}+\tau_{2}+\ell\sigma^{-}+\tau_{3}].
\end{align*}
The corresponding trajectory on $\mathbb{S}^{n-1}$ is periodic and satisfies%
\begin{align*}
s(\tau_{1}+i\sigma^{+},s_{0},w^{k,\ell})  &  =s^{+}\text{ for }i=0,1,\ldots
,k,\\
s(\tau_{1}+k\sigma^{+}+\tau_{2}+i\sigma^{-},s_{0},w^{k,\ell})  &  =s^{-}\text{
for }i=0,1,\ldots,\ell,\\
s(\tau_{1}+k\sigma^{+}+\tau_{2}+\ell\sigma^{-}+\tau_{3},s_{0},w^{k,\ell})  &
=s_{0},
\end{align*}
and for the corresponding trajectory on $\mathbb{R}^{n}$ one finds using
(\ref{homogene})%
\begin{align*}
\varphi(\tau_{1}+i\sigma^{+},x_{0},w^{k,\ell})  &  =\left(  \alpha^{+}\right)
^{i}\beta_{1}s^{+}\text{ for }i=0,1,\ldots,k,\\
\varphi(\tau_{1}+k\sigma^{+}+\tau_{2}+i\sigma^{-},x_{0},w^{k,\ell})  &
=\left(  \alpha^{-}\right)  ^{i}\beta_{2}\left(  \alpha^{+}\right)  ^{k}%
\beta_{1}s^{-}\text{ for }i=0,1,\ldots,\ell,\\
\varphi(\tau_{1}+k\sigma^{+}+\tau_{2}+\ell\sigma^{-}+\tau_{3},x_{0},w^{k,\ell
})  &  =\beta_{3}\left(  \alpha^{-}\right)  ^{\ell}\beta_{2}\left(  \alpha
^{+}\right)  ^{k}\beta_{1}s_{0}.
\end{align*}
Recall that our goal is to reach $x_{1}=\alpha_{1}s_{0}$ approximately. We
apply Lemma\ \ref{Lemma_dio} with $a=\alpha^{+},~b=\left(  \alpha^{-}\right)
^{-1}$, and $c=\alpha_{1}\left(  \beta_{3}\beta_{2}\beta_{1}\right)  ^{-1}$,
where we choose $\alpha^{+}\in(\alpha_{0}^{+},\alpha_{0}^{+}+\delta_{0})$ such
that $\frac{\log b}{\log a}=\frac{-\log\alpha^{-}}{\log\alpha^{+}}$ is
irrational. Thus for every $\varepsilon>0$ there are $k,\ell\in\mathbb{N}$
with%
\[
\left\vert \left(  \alpha^{+}\right)  ^{k}\left(  \alpha^{-}\right)  ^{\ell
}-\alpha_{1}\left(  \beta_{3}\beta_{2}\beta_{1}\right)  ^{-1}\right\vert
=\left\vert \frac{\left(  \alpha^{+}\right)  ^{k}}{\left(  \alpha^{-}\right)
^{-\ell}}-\alpha_{1}\left(  \beta_{3}\beta_{2}\beta_{1}\right)  ^{-1}%
\right\vert <\varepsilon,
\]
hence for all $\varepsilon>0$ there are $k,\ell\in\mathbb{N}$ with%
\[
\left\vert \beta_{3}\beta_{2}\beta_{1}\left(  \alpha^{+}\right)  ^{k}\left(
\alpha^{-}\right)  ^{\ell}-\alpha_{1}\right\vert <\varepsilon.
\]
It follows that for some $\delta\in(-\varepsilon,\varepsilon)$ one can choose
$k,\ell$ such that%
\[
\varphi(\tau_{1}+k\sigma^{+}+\tau_{2}+\ell\sigma^{-}+\tau_{3},x_{0},w^{k,\ell
})=\beta_{3}\beta_{2}\beta_{1}\left(  \alpha^{-}\right)  ^{\ell}\left(
\alpha^{+}\right)  ^{k}s_{0}=(\alpha_{1}+\delta)s_{0}.
\]
Since $\varepsilon>0$ is arbitrary, it follows that $x_{1}=\alpha_{1}s_{0}$ is
in the closure of the reachable set of $x_{0}$ and hence $l$ is contained the
closure of the reachable set from $x_{0}$.

\textbf{Step 2:} Let $x_{1},x_{2}\in\{\alpha s\in\mathbb{R}^{n}\left\vert
\alpha>0,s\in\,_{\mathbb{S}}D\right.  \}$, hence there are $\alpha_{1}%
,\alpha_{2}>0$ and $s_{1},s_{2}\in\,_{\mathbb{S}}D$ with $x_{1}=\alpha
_{1}s_{1}$ and $x_{2}=\alpha_{2}s_{2}$. Then there are a control $u_{1}$ and a
time $t_{1}\geq0$ with $s(t_{1},s_{1},u_{1})=s_{0}$, hence $\varphi
(t_{1},x_{1},u_{1})=\gamma_{1}s_{0}\in l$ for some $\gamma_{1}>0$. Since
$s_{0},s_{2}\in\mathrm{\,}_{\mathbb{S}}D$ one finds, for $\varepsilon>0$, a
control $u_{2}$ and a time $t_{2}\geq0$ such that, for $s_{3}:=s(t_{2}%
,s_{0},u_{2})$,%
\[
\left\Vert s_{3}-s_{2}\right\Vert <\varepsilon/\alpha_{2}\text{ and
}\left\Vert \alpha_{2}s_{3}-x_{2}\right\Vert =\left\Vert \alpha_{2}%
s_{3}-\alpha_{2}s_{2}\right\Vert <\varepsilon.
\]
The trajectory in $\mathbb{R}^{n}$ satisfies $\varphi(t_{2},s_{0}%
,u_{2})=\gamma_{2}s_{3}$ for some $\gamma_{2}>0$. By (\ref{homogene}) it
follows that%
\[
\varphi(t_{2},\frac{\alpha_{2}}{\gamma_{2}}s_{0},u_{2})=\frac{\alpha_{2}%
}{\gamma_{2}}\gamma_{2}s_{3}=\alpha_{2}s_{3}.
\]
Step 1 implies that one finds arbitrarily close to $\frac{\alpha_{2}}%
{\gamma_{2}}s_{0}\in l$ points in the reachable set from $\gamma_{1}s_{0}$,
hence in the reachable set from $x_{1}$. By continuous dependence on the
initial value, it follows that under the control $u_{2}$ points in the
reachable set from $x_{1}$ are steered into the $\varepsilon$-neighborhood of
$x_{2}$. Since $\varepsilon>0$ is arbitrary, this shows that $x_{2}$ is in the
closure of the reachable set from $x_{1}.$

\textbf{Step 3}: We have shown that the cone $D^{\prime}:=\{\alpha
s\in\mathbb{R}^{n}\left\vert \alpha>0,s\in\,_{\mathbb{S}}D\right.  \}$ is a
set of complete approximate controllability. It is maximal with this property,
since any set of approximate controllability in $\mathbb{R}^{n}$ projects to a
set of approximate controllability in $\mathbb{S}^{n-1}$, and $_{\mathbb{S}}D$
is a maximal set of approximate controllability. Finally, for every point
$x\in D^{\prime}$ there is a control $u$ with $\varphi(t,x,u)\in D^{\prime}$
for all $t\geq0$, since this holds in $_{\mathbb{S}}D$. Hence the cone
$D^{\prime}$ is a control set and it has a nonvoid interior.
\end{proof}

Step 1 in the proof above is based on the following lemma which uses a
Diophantine approximation property.

\begin{lemma}
\label{Lemma_dio}Let $a,b,c$ be real numbers with $a,b>1,c>0$, and $\frac{\log
b}{\log a}\in\mathbb{R}\setminus\mathbb{Q}$. Then for every $\varepsilon>0$
there are $k,\ell\in\mathbb{N}$ such that $\left\vert a^{k}b^{-\ell
}-c\right\vert <\varepsilon$.
\end{lemma}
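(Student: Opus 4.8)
The plan is to convert the multiplicative approximation into an additive one by taking logarithms, and then to invoke the classical density of the sequence $(k\theta \bmod 1)$ for irrational $\theta$.

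First I would fix $\varepsilon>0$ and, using continuity of the exponential at $\log c$, choose $\varepsilon'>0$ so that $|y-\log c|<\varepsilon'$ forces $|e^{y}-c|<\varepsilon$. It therefore suffices to produce $k,\ell\in\mathbb{N}$ with $|k\log a-\ell\log b-\log c|<\varepsilon'$. Dividing by $\log b>0$ and setting $\theta:=\log a/\log b$ and $\delta:=\log c/\log b$, the problem reduces to approximating the real number $\delta$ by expressions of the form $k\theta-\ell$ with error below $\varepsilon'':=\varepsilon'/\log b$. Here $\theta$ is irrational precisely because its reciprocal $\log b/\log a$ is, and $\delta\in\mathbb{R}$ is well defined since $c>0$.

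Next I would apply the classical equidistribution (Kronecker/Weyl) result: for irrational $\theta$ the orbit $\{k\theta \bmod 1 : k\geq 1\}$ is dense in $[0,1)$. Writing $\delta=n+r$ with $n=\lfloor\delta\rfloor$ and $r=\{\delta\}\in[0,1)$, density furnishes infinitely many $k\geq 1$ with $|\{k\theta\}-r|<\varepsilon''$. For any such $k$ I would set $\ell:=\lfloor k\theta\rfloor-n$, so that $k\theta-\ell=\{k\theta\}+n$ and hence $|k\theta-\ell-\delta|=|\{k\theta\}-r|<\varepsilon''$, which is exactly the additive estimate sought.

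The one genuine subtlety, which I expect to be the point requiring care, is that the lemma demands $k$ and $\ell$ to be \emph{natural} numbers, whereas Kronecker's theorem only asserts density of the group $\mathbb{Z}\theta+\mathbb{Z}$. This is resolved by noting that the density statement already supplies infinitely many positive $k$, and that for these the associated $\ell=\lfloor k\theta\rfloor-n$ grows without bound; choosing $k$ large enough therefore guarantees $\ell\geq 1$ as well. Tracing the logarithmic substitution backwards then yields $|a^{k}b^{-\ell}-c|<\varepsilon$, completing the proof.
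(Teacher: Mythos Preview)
Your argument is correct. Both you and the paper reduce the problem, via the logarithm, to an additive inhomogeneous approximation $|k\theta-\ell-\delta|<\varepsilon''$ with $\theta>0$ irrational, and in both cases the only nontrivial point is to secure $k,\ell\in\mathbb{N}$ rather than merely $k,\ell\in\mathbb{Z}$. The paper handles this by invoking a quantitative inhomogeneous Diophantine approximation theorem of Tchebychef: for irrational $\alpha$ and arbitrary $\beta$, the inequality $x\,|y-\alpha x-\beta|<2$ has infinitely many solutions $x\in\mathbb{N}$, $y\in\mathbb{Z}$; positivity of $\alpha$ then forces $y>0$ for large $x$, and taking $x=\ell$ large gives the desired smallness. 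You instead appeal to the Kronecker/Weyl density of $(\{k\theta\})_{k\geq1}$ in $[0,1)$, extract infinitely many $k$ hitting a prescribed neighborhood of the fractional part of $\delta$, and set $\ell=\lfloor k\theta\rfloor-\lfloor\delta\rfloor$, which is eventually positive since $\theta>0$. Your route avoids the somewhat specialized Tchebychef reference in favor of a fact most readers will already know, at the cost of not giving any quantitative rate; since the lemma is purely qualitative, nothing is lost.
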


\begin{proof}
Since the logarithm is continuously invertible, it suffices to show that for
every $\varepsilon>0$ there are $k,\ell\in\mathbb{N}$ with
\[
\varepsilon>\left\vert \log(a^{k}b^{-\ell})-\log c\right\vert =\left\vert
k\log a-\ell\log b-\log c\right\vert ,
\]
or, dividing by $\log a>0$,%
\[
\left\vert k-\ell\frac{\log b}{\log a}-\frac{\log c}{\log a}\right\vert
<\frac{\varepsilon}{\log a}.
\]
We use the following Diophantine approximation result which is due to
Tchebychef \cite[Th\'{e}or\`{e}me, p. 679]{Tcheb}: For any irrational number
$\alpha$ and any $\beta\in\mathbb{R}$ the inequality $x\left\vert y-\alpha
x-\beta\right\vert <2$ has an infinite number of solutions in $x\in
\mathbb{N},y\in\mathbb{Z}$. Observe that here also $y\in\mathbb{N}$ if
$\alpha>0$, since then $\mathrm{sgn}(y)=\mathrm{sgn}(\alpha x)=\mathrm{sgn}%
(x)=1$. For an application to the problem above, let $\alpha=\frac{\log
b}{\log a}>0,~\beta=\frac{\log c}{\log a},x=\ell,y=k$. One obtains that
\[
\ell\left\vert k-\ell\frac{\log b}{\log a}-\frac{\log c}{\log a}\right\vert
<2
\]
has an infinite number of solutions $k,\ell\in\mathbb{N}$. Choosing $\ell$
large enough such that $\frac{2\log a}{\ell}<\varepsilon$ and dividing by
$\ell$ one gets, as desired,%
\[
\left\vert k-\ell\frac{\log b}{\log a}-\frac{\log c}{\log a}\right\vert
<\frac{2}{\ell}<\frac{\varepsilon}{\log a}.
\]

\end{proof}

\begin{remark}
The Diophantine approximation result used above is closely related to a
theorem due to Minkowski on inhomogeneous linear Diophantine approximation,
cf. Cassels, \cite[Theorem I in Chapter III]{Cassels}. Here the existence of
integers $x,y$ solving $x\left\vert y-\alpha x-\beta\right\vert <\frac{1}{4}$
is established, but not the existence of infinitely many pairs $x,y$ with this
property, as required for the proof above.
\end{remark}

\begin{remark}
\label{Remark3.4}Suppose that for a control set $_{\mathbb{S}}D$ on the unit
sphere, every point in the interior is locally acessible and there are control
values $u^{\pm}\in\mathrm{int}\left(  \Omega\right)  $ such that $A(u^{+})$
has an eigenvalue $\lambda^{+}>0$ and $A(u^{-})$ has an eigenvalue
$\lambda^{-}<0$ with eigenspaces satisfying $E(\lambda^{\pm})\cap
\mathrm{int}\left(  _{\mathbb{S}}D\right)  \not =\varnothing$. Then assumption
(ii) of Theorem \ref{Theorem_95} holds. In fact, all points $s^{\pm}\in
E(\lambda^{\pm})\cap\mathrm{int}\left(  _{\mathbb{S}}D\right)  \,$ are
equilibria for the induced system on $\mathbb{S}^{n-1}$ with $A(u^{\pm}%
)s^{\pm}=\lambda^{\pm}s^{\pm}$. This implies for all $\sigma^{\pm}>0$ and the
constant controls $u^{\pm}\in\Omega$ that%
\begin{align*}
\varphi(\sigma^{+},s^{+},u^{+})  &  =\alpha_{0}^{+}s^{+}\text{ with }%
\alpha_{0}^{+}:=e^{\lambda^{+}\sigma^{+}}>1,\\
\varphi(\sigma^{-},s^{-},u^{-})  &  =\alpha^{-}s^{-}\text{ with }\alpha
^{-}:=e^{\lambda^{-}\sigma^{-}}<1.
\end{align*}
This follows, since the solutions of $\dot{x}=A(u^{\pm})x,~x(0)=s^{\pm}$, are
given by%
\[
\varphi(t,s^{\pm},u^{\pm})=e^{A(u^{\pm})t}s^{\pm}=e^{\lambda^{\pm}t}s^{\pm}.
\]
Varying $\sigma^{+}$, we get that $\varphi(\sigma^{+},s^{+},u^{+})=\alpha
^{+}s^{+}$ for all $\alpha^{+}\in(\alpha_{0}^{+},\alpha_{0}^{+}+\delta_{0})$
and some $\delta_{0}>0$.
\end{remark}

The following two examples illustrate Theorem \ref{Theorem_cones}. We consider
problems in $\mathbb{R}^{2}$ where the induced system on the unit circle is
not locally accessible. First let $A$ be given in Jordan normal form
$A=\left[
\begin{array}
[c]{cc}%
\lambda_{1} & 0\\
0 & \lambda_{2}%
\end{array}
\right]  $ and let the matrices $B_{1}$ and $B_{2}$ be diagonal. The situation
is a bit more complicated than in Remark \ref{Remark3.4}, since the
intersections of the relevant eigenspaces with the unit sphere yield boundary
points of the control set $_{\mathbb{S}}D$.

\begin{example}
\label{Ex1}Consider a system of the form%
\begin{equation}
\left[
\begin{array}
[c]{c}%
\dot{x}\\
\dot{y}%
\end{array}
\right]  =\left(  \left[
\begin{array}
[c]{cc}%
\lambda_{1} & 0\\
0 & \lambda_{2}%
\end{array}
\right]  +u(t)\left[
\begin{array}
[c]{cc}%
b_{11} & 0\\
0 & b_{21}%
\end{array}
\right]  +v(t)\left[
\begin{array}
[c]{cc}%
b_{12} & 0\\
0 & b_{22}%
\end{array}
\right]  \right)  \left[
\begin{array}
[c]{c}%
x\\
y
\end{array}
\right]  , \label{Exa1}%
\end{equation}
with $\lambda_{1},\lambda_{2}\in\mathbb{R}$ and control values $(u(t),v(t))\in
\Omega\subset\mathbb{R}^{2}$. This can be written as%
\[
\left[
\begin{array}
[c]{c}%
\dot{x}\\
\dot{y}%
\end{array}
\right]  =\left[
\begin{array}
[c]{cc}%
\lambda_{1}+b_{11}u+b_{12}v & 0\\
0 & \lambda_{2}+b_{21}u+b_{22}v
\end{array}
\right]  \left[
\begin{array}
[c]{c}%
x\\
y
\end{array}
\right]  =A(u,v)\left[
\begin{array}
[c]{c}%
x\\
y
\end{array}
\right]  .
\]
For all $(u,v)\in\Omega$ the eigenvalues $\mu_{1}(u,v)=\lambda_{1}%
+b_{11}u+b_{12}v$ and $\mu_{2}(u,v)=\lambda_{2}+b_{21}u+b_{22}v$ of $A(u,v)$
have the eigenspaces $\mathbb{R}\times\{0\}$ and $\{0\}\times\mathbb{R}$,
resp. Assume that there are control values $(u_{1},v_{1}),(u_{2},v_{2}%
)\in\Omega$ with%
\begin{equation}
\mu_{1}(u_{1},v_{1})>0,\ \mu_{2}(u_{1},v_{1})<0\text{ and }\mu_{1}(u_{2}%
,v_{2})<0,\ \mu_{2}(u_{2},v_{2})>0. \label{Ex1_A1}%
\end{equation}
For $(u_{1},v_{1})$ the eigenspace $\mathbb{R}\times\{0\}$ is attracting and
for $(u_{2},v_{2})$ the eigenspace $\{0\}\times\mathbb{R}$ is attracting. One
easily verifies that on the unit circle $\mathbb{S}^{1}$ there are four open
and invariant control sets $_{\mathbb{S}}D_{i},i=1,\ldots,4$, with nonvoid
interior on the unit sphere separated by the four points in the intersection
of the eigenspaces $\mathbb{R}\times\{0\}$ and $\{0\}\times\mathbb{R}$ with
$\mathbb{S}^{1}$. The four points in this intersection are invariant for all
$(u,v)$, hence they are not locally accessible, while every point in the
control sets is locally accessible.

In order to verify condition (\ref{R1}), assume that there is $(u_{3}%
,v_{3})\in\Omega$ with%
\begin{equation}
\mu_{1}(u_{3},v_{3})=0\text{ and }\mu_{2}(u_{3},v_{3})>0. \label{Ex1_A2}%
\end{equation}
Let $\tau_{1}>0$, and define $\tau_{2}:=\tau_{1}\frac{\mu_{1}(u_{1},v_{1}%
)-\mu_{2}(u_{1},v_{1})}{\mu_{2}(u_{3},v_{3})}>0$ and%
\[
(u^{+}(t),v^{+}(t)):=\left\{
\begin{array}
[c]{lll}%
(u_{1},v_{1}) & \text{for} & t\in\lbrack0,\tau_{1}]\\
(u_{3},v_{3}) & \text{for} & t\in(\tau_{1},\tau_{2}+\tau_{1}]
\end{array}
\right.  .
\]
Fix a point $s^{+}\in\,_{\mathbb{S}}D_{i}$. Then it follows that
\begin{align*}
\varphi(\tau_{2}+\tau_{1},s^{+},u^{+},v^{+})  &  =\varphi(\tau_{2}%
,\varphi(\tau_{1},s^{+},u_{1},v_{1}),u_{3},v_{3})\\
&  =\left[
\begin{array}
[c]{cc}%
e^{0} & 0\\
0 & e^{\tau_{2}\mu_{2}(u_{3},v_{3})}%
\end{array}
\right]  \left[
\begin{array}
[c]{c}%
e^{\tau_{1}\mu_{1}(u_{1},v_{1})}\\
e^{\tau_{1}\mu_{2}(u_{1},v_{1})}%
\end{array}
\right]  s^{+}=e^{\tau_{1}\mu_{1}(u_{1},v_{1})}s^{+}.
\end{align*}
Since $\tau_{1}>0$ is arbitrary, the first equality in (\ref{R1}) holds with
$\sigma^{+}=\tau_{2}+\tau_{1}$ and $\alpha^{+}=e^{\tau_{1}\mu_{1}(u_{1}%
,v_{1})}>1$.

Analogously, fix a point $s^{-}\in\,_{\mathbb{S}}D_{i}$. Assume that there is
$(u_{4},v_{4})\in\Omega$ with%
\begin{equation}
\mu_{1}(u_{4},v_{4})=0,~\mu_{2}(u_{4},v_{4})<0. \label{Ex1_A3}%
\end{equation}
Define, with $\tau_{1}>0$ and $\tau_{3}:=\tau_{1}\frac{\mu_{1}(u_{2}%
,v_{2})-\mu_{2}(u_{2},v_{2})}{\mu_{2}(u_{4},v_{4})}>0$,%
\[
(u^{-}(t),v^{-}(t))=\left\{
\begin{array}
[c]{lll}%
(u_{2},v_{2}) & \text{for} & t\in\lbrack0,\tau_{1}]\\
(u_{4},v_{4}) & \text{for} & t\in(\tau_{1},\tau_{3}+\tau_{1}]
\end{array}
\right.  .
\]
Then it follows that%
\begin{align*}
\varphi(\tau_{3}+\tau_{1},s^{-},u^{-},v^{-})  &  =\varphi(\tau_{3}%
,\varphi(\tau_{1},s^{-},u_{2},v_{2}),u_{4},v_{4})\\
&  =\left[
\begin{array}
[c]{cc}%
e^{0} & 0\\
0 & e^{\tau_{3}\mu_{2}(u_{4},v_{4})}%
\end{array}
\right]  \left[
\begin{array}
[c]{c}%
e^{\tau_{1}\mu_{1}(u_{2},v_{2})}\\
e^{\tau_{1}\mu_{2}(u_{2},v_{2})}%
\end{array}
\right]  s^{-}=e^{\tau_{1}\mu_{1}(u_{2},v_{2})}s^{-}.
\end{align*}
Thus also the second equality in (\ref{R1}) holds with $\sigma^{-}=\tau
_{3}+\tau_{1}$ and $\alpha^{-}=e^{\tau_{1}\mu_{1}(u_{2},v_{2})}<1$. Now
Theorem \ref{Theorem_cones} implies that there are four control set in
$\mathbb{R}^{2}$ given by the interiors of the four quadrants.

Observe that conditions (\ref{Ex1_A1}), (\ref{Ex1_A2}), and (\ref{Ex1_A3}) are
satisfied in the simple example with $A(u,v)=\left[
\begin{array}
[c]{cc}%
u & 0\\
0 & v
\end{array}
\right]  $ and $\Omega=[-1,1]\times\lbrack-1,1]$. Then $\mu_{1}(u,v)=u,~\mu
_{2}(u,v)=v$, and one may choose%
\[
(u_{1},v_{1})=(1,-1),~(u_{2},v_{2})=(-1,1),~(u_{3},v_{3})=(0,1),~(u_{4}%
,v_{4})=(0,-1).
\]

\end{example}

The next example shows that the situation is quite different if $A$ is a
two-dimensional Jordan block; in particular, scalar controls suffice to verify
assumption (\ref{R1}) in Theorem \ref{Theorem_cones} for a control set
$_{\mathbb{S}}D\not =\mathbb{S}^{1}$.

\begin{example}
Consider%
\begin{equation}
\left[
\begin{array}
[c]{c}%
\dot{x}\\
\dot{y}%
\end{array}
\right]  =\left(  \left[
\begin{array}
[c]{cc}%
\lambda & 1\\
0 & \lambda
\end{array}
\right]  +u(t)\left[
\begin{array}
[c]{cc}%
b_{11} & b_{12}\\
0 & b_{11}%
\end{array}
\right]  \right)  \left[
\begin{array}
[c]{c}%
x\\
y
\end{array}
\right]  , \label{Exa2}%
\end{equation}
with $\lambda\in\mathbb{R}$ and $u(t)\in\Omega$. The system can be written as%
\[
\left[
\begin{array}
[c]{c}%
\dot{x}\\
\dot{y}%
\end{array}
\right]  =\left[
\begin{array}
[c]{cc}%
\lambda+b_{11}u & 1+b_{12}u\\
0 & \lambda+b_{11}u
\end{array}
\right]  \left[
\begin{array}
[c]{c}%
x\\
y
\end{array}
\right]  =A(u)\left[
\begin{array}
[c]{c}%
x\\
y
\end{array}
\right]  .
\]
For all $u\in\Omega$ the eigenvalue $\mu(u)=\lambda+b_{11}u$ has the
eigenspace $\mathbb{R}\times\{0\}$. The intersection of the unit circle with
the eigenspace is given by $\{(1,0)^{\top},(-1,0)^{\top}\}$, which are fixed
under any control for the projected system. Suppose that $b_{12}\not =0$ and
$\Omega$ contains the two points $u_{1}:=0$ and $u_{2}:=-2/b_{12}$, and write
$\mu_{1}=\mu(u_{1})=\lambda$ and $\mu_{2}=\mu(u_{2})=\lambda-2\frac{b_{11}%
}{b_{12}}$. Thus we consider the two differential equations%
\begin{equation}
\left[
\begin{array}
[c]{c}%
\dot{x}\\
\dot{y}%
\end{array}
\right]  =\left[
\begin{array}
[c]{cc}%
\mu_{1} & 1\\
0 & \mu_{1}%
\end{array}
\right]  \left[
\begin{array}
[c]{c}%
x\\
y
\end{array}
\right]  \text{ and }\left[
\begin{array}
[c]{c}%
\dot{x}\\
\dot{y}%
\end{array}
\right]  =-\left[
\begin{array}
[c]{cc}%
-\mu_{2} & 1\\
0 & -\mu_{2}%
\end{array}
\right]  \left[
\begin{array}
[c]{c}%
x\\
y
\end{array}
\right]  . \label{Exa3}%
\end{equation}
The solutions of (\ref{Exa3}) are given by%
\[
\psi_{1}(t,x_{0},y_{0})=e^{\mu_{1}t}\left[
\begin{array}
[c]{c}%
x_{0}+ty_{0}\\
y_{0}%
\end{array}
\right]  ,\quad\psi_{2}(t,x_{0},y_{0})=e^{\mu_{2}t}\left[
\begin{array}
[c]{c}%
x_{0}-ty_{0}\\
y_{0}%
\end{array}
\right]  ,
\]
resp. For the projected systems on the unit circle the trajectory on the upper
half-plane of the first equation tends for $t\rightarrow\infty$ to $(1,0)$ and
for $t\rightarrow-\infty$ to $(-1,0)$. The trajectory for the second equation
moves in the opposite direction. This proves that the open upper semicircle on
$\mathbb{S}^{1}$ is an invariant control set $_{\mathbb{S}}D_{1}$.
Analogously, also the open lower semicircle on $\mathbb{S}^{1}$ is an
invariant control set $_{\mathbb{S}}D_{2}$.

In order to verify the conditions in (\ref{R1}) fix a point $s^{+}%
\in\,_{\mathbb{S}}D_{1}$. Let $\tau>0$ and define%
\[
u^{+}(t)=\left\{
\begin{array}
[c]{lll}%
u_{1} & \text{for} & t\in\lbrack0,\tau]\\
u_{2} & \text{for} & t\in(\tau,2\tau]
\end{array}
\right.  .
\]
It follows that%
\[
\varphi(2\tau,s^{+},u^{+})=\psi_{2}(\tau,\psi_{1}(\tau,s^{+}))=e^{\mu_{2}%
\tau+\mu_{1}\tau}s^{+}.
\]
Then $\alpha^{+}=e^{\mu_{2}\tau+\mu_{1}\tau}>1$ if and only if $\mu_{2}%
+\mu_{1}=2\lambda-2\frac{b_{11}}{b_{12}}>0$, i.e., $\lambda>\frac{b_{11}%
}{b_{12}}$. Similarly, we can find conditions for $\alpha^{-}<1$: The control
sets on the unit sphere do not change if we add a third control value $u_{3}$
which will be specified in a moment. Repeating the derivation above, we find
with $\mu_{3}:=\mu(u_{3})$ that $\alpha^{-}:=e^{\mu_{3}\tau+\mu_{2}\tau}<1$ if
and only if $\mu_{3}+\mu_{2}=\lambda+b_{11}u_{3}+\lambda-2\frac{b_{11}}%
{b_{12}}<0$. This is equivalent to
\begin{equation}
u_{3}b_{11}<2\frac{b_{11}}{b_{12}}-2\lambda. \label{cond1}%
\end{equation}
We conclude that condition (\ref{R1}) holds if $\lambda>\frac{b_{11}}{b_{12}}$
for $\Omega=\{u_{1},u_{2},u_{3}\}$ with $u_{1}=0$ and $u_{2}=-\frac{2}{b_{12}%
}$, and $u_{3}$ satisfying (\ref{cond1}). Then there are two invariant control
sets with nonvoid interior in $\mathbb{R}^{2}$ given by the open upper and
lower half-planes. Observe that these conditions hold, e.g., for
\[
\lambda=1,b_{11}=1,b_{12}=2\text{, and }u_{1}=0,u_{2}=-1,u_{3}<-1.
\]

\end{example}

Next we impose stronger assumptions on the homogeneous bilinear control system
(\ref{bilinear_h}). We require that the control range $\Omega$ is a compact
and convex neighborhood of the origin and that the accessibility rank
condition holds on all of $\mathbb{P}^{n-1}$,%
\begin{equation}
\dim\mathcal{LA}\{\mathbb{P}h(u,\cdot);u\in\Omega\}(p)=n-1\text{ for all }%
p\in\mathbb{P}^{n-1}. \label{ARC_P}%
\end{equation}
Then by Colonius and Kliemann \cite[Theorem 7.1.1]{ColK00} there are $k_{0}$
control sets with nonvoid interior in $\mathbb{P}^{n-1}$ denoted by
$_{\mathbb{P}}D_{1},\ldots,\,_{\mathbb{P}}D_{k_{0}},1\leq k_{0}\leq n$.
Exactly one of these control sets is an invariant control set.

\begin{remark}
Braga Barros and San Martin \cite{BraSM96} use the classification of
semisimple Lie groups acting transitively on projective space $\mathbb{P}%
^{n-1}$ (cf. Boothby and Wilson \cite{BW}) to determine the number $k_{0}%
\in\{1,\ldots,n\}$ of control sets $_{\mathbb{P}}D_{i}$ in projective space
(it is either equal to $n$, $n/2$, or $n/4$).
\end{remark}

Next we analyze the relations between the control sets for the induced systems
on projective space $\mathbb{P}^{n-1}$ and on the unit sphere\ $\mathbb{S}%
^{n-1}$. We will frequently use the following elementary facts that follow
from (\ref{homogene}):

Let $s_{1},s_{2}\in\mathbb{S}^{n-1}$. If $s_{2}$ can be reached from $s_{1}$
(for system (\ref{sphere})), then $-s_{2}$ can be reached from $-s_{1}$. If on
$\mathbb{P}^{n-1}$ the point $\mathbb{P}s_{2}$ can be reached from
$\mathbb{P}s_{1}$, then on $\mathbb{S}^{n-1}$ at least one of the points
$s_{2}$ or $-s_{2}$ can be reached from $s_{1}$.

The proof of the following lemma is modeled after Bacciotti and Vivalda
\cite[Lemma 3]{BacV13}, where controllable systems are analyzed.

\begin{lemma}
\label{Lemma_sphere}(i) Let $_{\mathbb{S}}D$ be a control set on
$\mathbb{S}^{n-1}$. Then the projection of $_{\mathbb{S}}D$ to $\mathbb{P}%
^{n-1}$ is contained in a control set $_{\mathbb{P}}D$.

(ii) Assume that the accessibility rank condition (\ref{ARC_P}) on
$\mathbb{P}^{n-1}$ holds and consider a control set $_{\mathbb{P}}D_{i}$ on
$\mathbb{P}^{n-1}$. Suppose that there is $s_{0}\in\mathbb{S}^{n-1}$ such that
$\mathbb{P}s_{0}\in\mathrm{int}\left(  _{\mathbb{P}}D_{i}\right)  $ and
$-s_{0}$ can be reached from $s_{0}$. Then there exists a control set
$_{\mathbb{S}}D$ on $\mathbb{S}^{n-1}$ containing $A:=\{s\in\mathbb{S}%
^{n-1}\left\vert \mathbb{P}s\in\,_{\mathbb{P}}D_{i}\right.  \}$.
\end{lemma}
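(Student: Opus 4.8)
The plan is to handle the two parts separately, relying on the two elementary facts stated just before the lemma together with the way reachability behaves under the projections $\pi$ and $\mathbb{P}$. For part (i), I would push the defining properties of a control set forward under the open map $\mathbb{P}:\mathbb{S}^{n-1}\to\mathbb{P}^{n-1}$. Since trajectories of (\ref{sphere}) project to trajectories of the induced system on $\mathbb{P}^{n-1}$, property (i) for $_{\mathbb{S}}D$ gives, for each $\mathbb{P}s\in\mathbb{P}(_{\mathbb{S}}D)$, a control keeping the projected trajectory inside $\mathbb{P}(_{\mathbb{S}}D)$, while property (ii) together with continuity of $\mathbb{P}$ gives $\mathbb{P}(_{\mathbb{S}}D)\subset\overline{\mathcal{O}^{+}(\mathbb{P}s)}$ on $\mathbb{P}^{n-1}$. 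Hence $\mathbb{P}(_{\mathbb{S}}D)$ is a set of complete approximate controllability satisfying the viability condition (i), so it is contained in some control set $_{\mathbb{P}}D$. This is the same mechanism as in the Proposition preceding Theorem \ref{Theorem_cones}.

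For part (ii), write $\mathcal{O}^{+}_{\mathbb{S}}$ and $\mathcal{O}^{+}_{\mathbb{P}}$ for the reachable sets of the induced systems on the sphere and on projective space. The first step is the identity $\mathbb{P}^{-1}(\mathcal{O}^{+}_{\mathbb{P}}(\mathbb{P}s))=\mathcal{O}^{+}_{\mathbb{S}}(s)\cup\mathcal{O}^{+}_{\mathbb{S}}(-s)$, which follows because the trajectories on $\mathbb{P}^{n-1}$ are the images under $\mathbb{P}$ of the linear trajectories $t\mapsto\Phi_{u}(t,0)s$, combined with the first elementary fact in the form $\mathcal{O}^{+}_{\mathbb{S}}(-s)=-\mathcal{O}^{+}_{\mathbb{S}}(s)$. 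The hypothesis that $-s_{0}$ is reachable from $s_{0}$ gives, by that same fact applied with $s_{1}=s_{0},s_{2}=-s_{0}$, that $s_{0}$ is reachable from $-s_{0}$; by transitivity $\mathcal{O}^{+}_{\mathbb{S}}(s_{0})=\mathcal{O}^{+}_{\mathbb{S}}(-s_{0})$, and the preimage collapses to $\mathbb{P}^{-1}(\mathcal{O}^{+}_{\mathbb{P}}(\mathbb{P}s_{0}))=\mathcal{O}^{+}_{\mathbb{S}}(s_{0})$. The point is that the antipodal ambiguity which normally obstructs lifting reachability from $\mathbb{P}^{n-1}$ to $\mathbb{S}^{n-1}$ is exactly neutralized by the assumed connection between $s_{0}$ and $-s_{0}$.

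I would then show $A\subset\overline{\mathcal{O}^{+}_{\mathbb{S}}(s_{0})}$. Since $\mathbb{P}s_{0}\in{}_{\mathbb{P}}D_{i}$, property (ii) of the control set $_{\mathbb{P}}D_{i}$ gives $_{\mathbb{P}}D_{i}\subset\overline{\mathcal{O}^{+}_{\mathbb{P}}(\mathbb{P}s_{0})}$; applying $\mathbb{P}^{-1}$ and using that $\mathbb{P}$ is a local homeomorphism, so that $\mathbb{P}^{-1}(\overline{O})\subset\overline{\mathbb{P}^{-1}(O)}$, yields $A=\mathbb{P}^{-1}(_{\mathbb{P}}D_{i})\subset\overline{\mathbb{P}^{-1}(\mathcal{O}^{+}_{\mathbb{P}}(\mathbb{P}s_{0}))}=\overline{\mathcal{O}^{+}_{\mathbb{S}}(s_{0})}$. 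Next, for any $s\in A$ I would verify $s_{0}\in\mathcal{O}^{+}_{\mathbb{S}}(s)$: the accessibility rank condition (\ref{ARC_P}) gives local accessibility on $\mathbb{P}^{n-1}$, hence $\mathrm{int}(_{\mathbb{P}}D_{i})\subset\mathcal{O}^{+}_{\mathbb{P}}(\mathbb{P}s)$ and in particular $\mathbb{P}s_{0}\in\mathcal{O}^{+}_{\mathbb{P}}(\mathbb{P}s)$; by the second elementary fact $s_{0}$ or $-s_{0}$ lies in $\mathcal{O}^{+}_{\mathbb{S}}(s)$, and since $s_{0}\in\mathcal{O}^{+}_{\mathbb{S}}(-s_{0})$ both cases force $s_{0}\in\mathcal{O}^{+}_{\mathbb{S}}(s)$. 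Combining, $A\subset\overline{\mathcal{O}^{+}_{\mathbb{S}}(s_{0})}\subset\overline{\mathcal{O}^{+}_{\mathbb{S}}(s)}$ for every $s\in A$, so $A$ is a set of complete approximate controllability. Viability (property (i)) for $A$ follows by lifting an invariance control for $\mathbb{P}s$ in $_{\mathbb{P}}D_{i}$: the sphere trajectory for that control projects into $_{\mathbb{P}}D_{i}$, hence stays in $A=\mathbb{P}^{-1}(_{\mathbb{P}}D_{i})$. Thus $A$ satisfies (i) and (ii) and is therefore contained in a control set $_{\mathbb{S}}D$ on $\mathbb{S}^{n-1}$.

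I expect the main obstacle to be the bookkeeping of the antipodal lifts: reachability on $\mathbb{P}^{n-1}$ only guarantees that one of $s_{2},-s_{2}$ is reachable on $\mathbb{S}^{n-1}$, and the whole argument hinges on showing that the hypothesis $-s_{0}\in\mathcal{O}^{+}_{\mathbb{S}}(s_{0})$ forces both lifts into $\mathcal{O}^{+}_{\mathbb{S}}(s_{0})$. Some care is also needed when lifting approximate, that is closure, reachability; I would avoid tracking individual limits by using the local-homeomorphism inclusion $\mathbb{P}^{-1}(\overline{O})\subset\overline{\mathbb{P}^{-1}(O)}$ directly.
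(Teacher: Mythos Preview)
Your proposal is correct and follows essentially the same route as the paper: both arguments use $s_{0}$ as a pivot, exploit the mutual reachability of $s_{0}$ and $-s_{0}$ to collapse the antipodal ambiguity, and invoke exact reachability to $\mathbb{P}s_{0}\in\mathrm{int}({}_{\mathbb{P}}D_{i})$ via the accessibility rank condition. The only difference is organizational---you package the argument through the clean identity $\mathbb{P}^{-1}(\mathcal{O}^{+}_{\mathbb{P}}(\mathbb{P}s_{0}))=\mathcal{O}^{+}_{\mathbb{S}}(s_{0})$ and the local-homeomorphism closure inclusion, whereas the paper carries out the same bookkeeping by a direct pairwise case analysis on $s_{1},s_{2}\in A$.
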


\begin{proof}
Assertion (i) is immediate from the definitions. Concerning assertion (ii) it
is clear that for all $s\in A$ there is a control $u$ such that the trajectory
of system (\ref{sphere}) remains in $A$ for all $t\geq0$. Now let $s_{1}%
,s_{2}\in A$. We have to show that $s_{2}$ is in the closure of the reachable
set $\mathcal{O}^{+}(s_{1})$ for system (\ref{sphere}). Since $\mathbb{P}%
s_{1},\mathbb{P}s_{2}\in\,_{\mathbb{P}}D_{i}$ it follows that $s_{2}%
\in\overline{\mathcal{O}^{+}(s_{1})}$ or $-s_{2}\in\overline{\mathcal{O}%
^{+}(s_{1})}$. In the first case we are done. In the second case it follows
that $s_{2}\in\overline{\mathcal{O}^{+}(-s_{1})}$, and that, by our
assumption, $s_{0}\in\mathcal{O}^{+}(-s_{0})$. As noted in Section
\ref{Section2}, $\mathbb{P}(-s_{1})=\mathbb{P}s_{1}\in\,_{\mathbb{P}}D_{i}$
and $\mathbb{P}s_{0}\in\mathrm{int}\left(  _{\mathbb{P}}D_{i}\right)  $ imply
that $\mathbb{P}s_{0}$ can be reached from $\mathbb{P}(-s_{1})$, hence
$s_{0}\in\mathcal{O}^{+}(-s_{1})$ or $-s_{0}\in\mathcal{O}^{+}(-s_{1})$. We
claim that also in the second case one can reach $s_{0}$ from $-s_{1}$. In
fact, $-s_{0}\in\mathcal{O}^{+}(-s_{1})$ implies $s_{0}\in\mathcal{O}%
^{+}(-s_{0})\subset\mathcal{O}^{+}(-s_{1})$. Hence $s_{0}\in\mathcal{O}%
^{+}(-s_{1})$ and we find $-s_{0}\in\mathcal{O}^{+}(s_{1})$.

Since $\mathbb{P}s_{0},\mathbb{P}s_{2}\in\,_{\mathbb{P}}D$ it follows that
$s_{2}\in\overline{\mathcal{O}^{+}(s_{0})}$ or $-s_{2}\in\overline
{\mathcal{O}^{+}(s_{0})}$. In the second case, $s_{2}\in\overline
{\mathcal{O}^{+}(-s_{0})}\subset\overline{\mathcal{O}^{+}(s_{1})}$ and in the
first case, one has%
\[
s_{2}\in\overline{\mathcal{O}^{+}(s_{0})}\subset\overline{\mathcal{O}%
^{+}(-s_{0})}\subset\overline{\mathcal{O}^{+}(s_{1})}.
\]

\end{proof}

The proof of the next proposition uses arguments from Bacciotti and Vivalda
\cite[Proposition 2]{BacV13}.

\begin{proposition}
\label{Proposition3.11}If accessibility rank condition (\ref{ARC_P}) holds for
the induced system on $\mathbb{P}^{n-1}$, it also holds for the induced system
on $\mathbb{S}^{n-1}$.
\end{proposition}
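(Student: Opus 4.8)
The plan is to exploit the fact that the canonical projection $q:\mathbb{S}^{n-1}\to\mathbb{P}^{n-1}$ is a smooth two-to-one covering map, and hence a surjective local diffeomorphism. At every $s\in\mathbb{S}^{n-1}$ its differential $dq_s:T_s\mathbb{S}^{n-1}\to T_{q(s)}\mathbb{P}^{n-1}$ is a linear isomorphism, and as $s$ ranges over $\mathbb{S}^{n-1}$ the point $q(s)$ ranges over all of $\mathbb{P}^{n-1}$. The strategy is to show that $dq_s$ carries the Lie-algebra evaluation $\mathcal{LA}\{h(u,\cdot);u\in\Omega\}(s)$ isomorphically onto $\mathcal{LA}\{\mathbb{P}h(u,\cdot);u\in\Omega\}(q(s))$, so that these two subspaces have equal dimension. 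Granting this, condition (\ref{ARC_P}) forces the right-hand space to equal $n-1$ for every $q(s)$, whence the left-hand space equals $n-1=\dim\mathbb{S}^{n-1}$ for every $s$, which is exactly the accessibility rank condition (\ref{ARC}) on $\mathbb{S}^{n-1}$.

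The central step is to verify that each spherical vector field $h(u,\cdot)$ is $q$-related to the corresponding projective field $\mathbb{P}h(u,\cdot)$, that is, $dq_s(h(u,s))=(\mathbb{P}h(u,\cdot))(q(s))$. This is essentially the definition of the induced projective system, and it is well posed precisely because the fields are odd: from $h_i(s)=B_is-(s^\top B_is)s$ one reads off $h(u,-s)=-h(u,s)$, and combining this with $q\circ a=q$ for the antipodal map $a(s)=-s$ (which gives $dq_{-s}(-v)=dq_s(v)$, since $da_s=-\mathrm{id}$) shows that the two sphere points $s$ and $-s$ produce the same pushforward vector at $q(s)$. Thus the pushforward is well defined and the $q$-relatedness of generators is established.

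Next I would invoke the naturality of the Lie bracket under $q$-related fields: if $X\sim_q\bar X$ and $Y\sim_q\bar Y$, then $[X,Y]\sim_q[\bar X,\bar Y]$. By induction, every iterated bracket of the generators $h(u_1,\cdot),\dots,h(u_r,\cdot)$ is $q$-related to the corresponding iterated bracket of $\mathbb{P}h(u_1,\cdot),\dots,\mathbb{P}h(u_r,\cdot)$, so applying $dq_s$ to its value at $s$ yields the value of the projective bracket at $q(s)$. Passing to linear spans of all such evaluations, and using that $dq_s$ is an isomorphism, gives $dq_s\big(\mathcal{LA}\{h(u,\cdot)\}(s)\big)=\mathcal{LA}\{\mathbb{P}h(u,\cdot)\}(q(s))$ and hence the equality of dimensions needed above.

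I expect the only genuine obstacle to be the careful bookkeeping around oddness and the covering structure that makes the relation $h(u,\cdot)\sim_q\mathbb{P}h(u,\cdot)$ precise; once that is in place, the remainder is the standard functoriality of the Lie algebra of vector fields under a local diffeomorphism and presents no real difficulty. I note in passing that the dimension equality is symmetric, so the same argument also yields the converse implication, although only the stated direction is required here.
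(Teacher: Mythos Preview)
Your argument is correct and takes a genuinely different route from the paper's. The paper, following Bacciotti--Vivalda, works in explicit coordinates: it fixes the North Pole $\bar z_0=(0,\ldots,0,1)$ and the chart $\psi_n$ on $\mathbb{P}^{n-1}$, picks matrices $A_1,\ldots,A_{n-1}$ in the matrix Lie algebra whose induced projective fields are linearly independent at $[\bar z_0]$, writes out the local expressions $A_k^{n}(z_0)$ there, and observes that the first $n-1$ components of the corresponding sphere fields $A_k^{\circ}(\bar z_0)$ agree with these, forcing linear independence on $\mathbb{S}^{n-1}$. By contrast, you work invariantly: the covering map $q:\mathbb{S}^{n-1}\to\mathbb{P}^{n-1}$ is a local diffeomorphism, the oddness $h(u,-s)=-h(u,s)$ makes each $h(u,\cdot)$ $q$-related to $\mathbb{P}h(u,\cdot)$ by the very definition of the induced projective field, naturality of the bracket propagates $q$-relatedness to the entire generated Lie algebra, and then the isomorphism $dq_s$ transports the evaluated Lie algebra at $s$ bijectively onto the one at $q(s)$. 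Your approach is shorter, coordinate-free, and immediately yields the converse implication as well; the paper's computation has the compensating virtue of being entirely explicit and of tying in directly with the formulas already available in the cited reference.
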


\begin{proof}
Recall that $\mathbb{P}^{n-1}=(\mathbb{R}^{n}\setminus\{0\})/\thicksim$, where
$\thicksim$ is the equivalence relation $x\thicksim y$ if $y=\lambda x$ with
some $\lambda\not =0$. Furthermore, an atlas of $\mathbb{P}^{n-1}$ is given by
$n$ charts $(U_{i},\psi_{i})$, where $U_{i}$ is the set of equivalence classes
$[x_{1}:\cdots:x_{n}]$ with $x_{i}\not =0$ (the homogeneous coordinates) and
$\psi_{i}:U_{i}\rightarrow\mathbb{R}^{n-1}$ is defined by%
\[
\psi_{i}([x_{1}:\cdots:x_{n}])=\left(  \frac{x_{1}}{x_{i}},\ldots,\frac
{\hat{x}_{i}}{x_{i}},\ldots,\frac{x_{n}}{x_{i}}\right)  ,
\]
where the hat means that the $i$-th entry is missing.

For the sake of simplicity we prove the rank condition for the North Pole of
$\mathbb{S}^{n-1}$ given by $\bar{z}_{0}=(0,\ldots,0,1)$. By assumption, the
rank of the Lie algebra of the system on $\mathbb{P}^{n-1}$ is $n-1$ on all of
$\mathbb{P}^{n-1}$. Consider the point $x_{0}=[0:\cdots:0:1]\in\mathbb{P}%
^{n-1}$. Thus there exist $n-1$ matrices $A_{1},\ldots,A_{n-1}$ in the Lie
algebra generated by the system on $\mathbb{R}^{n}\setminus\{0\}$ such that
for the induced vector fields $A_{1}^{\flat},\ldots,A_{n-1}^{\flat}$ in the
Lie algebra for the system on $\mathbb{P}^{n-1}$ one obtains that the rank of
the family $\left(  A_{1}^{\flat}(x_{0}),\ldots,A_{n-1}^{\flat}(x_{0})\right)
$ is $n-1$. Now \cite[formula (5)]{BacV13} shows the following formula for the
local expression of this family, which has the form $\left(  A_{1}^{n}%
(z_{0}),\ldots,A_{n-1}^{n}(z_{0})\right)  $ with $z_{0}=(0,\ldots,0)$; let
$a_{1}^{k}(\bar{z}_{0}),\ldots,a_{n}^{k}(\bar{z}_{0})$ denote the $n$
components of $A_{k}\bar{z}_{0}$. Then, for $k=1,\ldots,n-1$,
\[
A_{k}^{n}(z_{0})=(a_{1}^{k}(\bar{z}_{0}),\ldots,a_{n-1}^{k}(\bar{z}%
_{0}))^{\top}-a_{n}^{k}(\bar{z}_{0})z_{0}=(a_{1}^{k}(\bar{z}_{0}%
),\ldots,a_{n-1}^{k}(\bar{z}_{0}))^{\top}.
\]
So $A_{k}^{n}(z_{0})$ is the vector whose components are equal to the first
$n-1$ components of the last column of the matrix $A_{k}$.

On the other hand, the projections on $\mathbb{S}^{n-1}$ of the linear vector
fields for the matrices $A_{1},\ldots,A_{n-1}$ are the vector fields (cf.
(\ref{sphere}))
\[
A_{k}^{\circ}(x)=A_{k}x-x^{\top}Ax\cdot x,\quad x\in\mathbb{S}^{n-1}.
\]
Thus we get, for $k=1,\ldots,n-1$%
\[
A_{k}^{\circ}(\bar{z}_{0})=A_{k}\bar{z}_{0}-\bar{z}_{0}^{\top}A_{k}\bar{z}%
_{0}\cdot\bar{z}_{0}=(a_{1}^{k}(\bar{z}_{0}),\ldots,a_{n-1}^{k}(\bar{z}%
_{0}),a_{n}^{k}(z_{0})-\bar{z}_{0}^{\top}A_{k}\bar{z}_{0})^{\top},
\]
so the $n-1$ first components of $A_{k}^{\circ}(\bar{z}_{0})$ are equal to the
components of $A_{k}^{n}(z_{0})$. This implies that the vectors $A_{1}^{\circ
}(\bar{z}_{0}),\ldots,A_{n-1}^{\circ}(\bar{z}_{0})$ are linearly independent.
\end{proof}

We get the following result characterizing the relation between the control
sets $_{\mathbb{P}}D_{1},\ldots,\,_{\mathbb{P}}D_{k_{0}},1\leq k_{0}\leq n$,
on projective space and the control sets on the unit sphere.

\begin{teo}
\label{Theorem_sphere}Suppose that accessibility rank condition (\ref{ARC_P})
holds for the induced system on projective space $\mathbb{P}^{n-1}$.

(i) If there is $s_{0}\in\mathbb{S}^{n-1}$ with $\mathbb{P}s_{0}%
\in\mathrm{int}\left(  _{\mathbb{P}}D_{i}\right)  $ such that $-s_{0}$ can be
reached for system (\ref{sphere}) from $s_{0}$, then $_{\mathbb{S}}%
D:=\{s\in\mathbb{S}^{n-1}\left\vert \mathbb{P}s\in\,_{\mathbb{P}}D_{i}\right.
\}$ is the unique control set on $\mathbb{S}^{n-1}$ which projects to
$_{\mathbb{P}}D_{i}$.

(ii) For every control set $_{\mathbb{P}}D_{i},i\in\{1,\ldots,k_{0}\}$, there
are at most two control sets $_{\mathbb{S}}D$ and $_{\mathbb{S}}D^{\prime}$ on
$\mathbb{S}^{n-1}$ with nonvoid interior such that%
\begin{equation}
\{s\in\mathbb{S}^{n-1}\left\vert \mathbb{P}s\in\,_{\mathbb{P}}D_{i}\right.
\}=\,_{\mathbb{S}}D\cup\,_{\mathbb{S}}D^{\prime}, \label{sphere_1}%
\end{equation}
and $_{\mathbb{S}}D=-$\thinspace$_{\mathbb{S}}D^{\prime}$.

(iii) There are $k_{1}$ control sets with nonvoid interior on $\mathbb{S}%
^{n-1}$ denoted by $_{\mathbb{S}}D_{1},\ldots,\allowbreak\,_{\mathbb{S}%
}D_{k_{1}}$ with $1\leq k_{1}\leq2k_{0}\leq2n$. At most two of the sets
$_{\mathbb{S}}D_{i}$ are invariant control sets.
\end{teo}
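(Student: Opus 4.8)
The plan is to handle the three parts in order, using throughout that accessibility rank condition (\ref{ARC_P}) together with Proposition \ref{Proposition3.11} and analyticity gives local accessibility on $\mathbb{S}^{n-1}$, that $\mathbb{P}$ is an open local diffeomorphism intertwining the two induced systems, and the two antipodal relations recorded before Lemma \ref{Lemma_sphere}: if $s_{2}$ is reachable from $s_{1}$ on $\mathbb{S}^{n-1}$ then $-s_{2}$ is reachable from $-s_{1}$, and if $\mathbb{P}s_{2}$ is reachable from $\mathbb{P}s_{1}$ then at least one of $s_{2},-s_{2}$ is reachable from $s_{1}$. For part (i) I would invoke Lemma \ref{Lemma_sphere}(ii), whose hypotheses are exactly those of (i): it yields a control set $_{\mathbb{S}}D$ on $\mathbb{S}^{n-1}$ containing $A:=\{s\mid \mathbb{P}s\in\,_{\mathbb{P}}D_{i}\}$. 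To upgrade this to equality, note by Lemma \ref{Lemma_sphere}(i) that $_{\mathbb{S}}D$ projects into a single projective control set $_{\mathbb{P}}D_{j}$; since $\mathbb{P}(_{\mathbb{S}}D)\supseteq\mathbb{P}(A)=\,_{\mathbb{P}}D_{i}$ and distinct control sets are disjoint, $_{\mathbb{P}}D_{j}=\,_{\mathbb{P}}D_{i}$, whence $_{\mathbb{S}}D\subseteq A$ and therefore $_{\mathbb{S}}D=A$. Any further control set projecting into $_{\mathbb{P}}D_{i}$ is contained in $A=\,_{\mathbb{S}}D$ and nonvoid, so by maximality equals $_{\mathbb{S}}D$.

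For parts (ii) and (iii) the organizing device is a dichotomy at an interior lift. Fix $p_{0}\in\mathrm{int}(_{\mathbb{P}}D_{i})$ and a lift $s_{0}$. Using the control-set property $p_{0}\in\mathrm{int}(_{\mathbb{P}}D_{i})\subseteq\mathcal{O}^{+}(p_{0})$ from after Definition \ref{def:Dset} and lifting, either $s_{0}\in\mathcal{O}^{+}(s_{0})$, so (by local accessibility) $s_{0}$ lies in a control set $_{\mathbb{S}}D$ with $s_{0}\in\mathrm{int}(_{\mathbb{S}}D)$, or $-s_{0}$ is reachable from $s_{0}$, which is the hypothesis of (i). Put $_{\mathbb{S}}D':=-\,_{\mathbb{S}}D$; by the first antipodal relation it is again a control set with nonvoid interior projecting into $_{\mathbb{P}}D_{i}$. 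To show there are no others, let $_{\mathbb{S}}E$ be any such control set and $e_{0}\in\mathrm{int}(_{\mathbb{S}}E)$; then $\mathbb{P}e_{0},\mathbb{P}s_{0}\in\mathrm{int}(_{\mathbb{P}}D_{i})$ are mutually reachable, and lifting the two reachabilities gives one of four sign combinations. The two aligned combinations yield $s_{0}\leftrightarrow e_{0}$ or $s_{0}\leftrightarrow -e_{0}$, i.e. $_{\mathbb{S}}E=\,_{\mathbb{S}}D$ or $_{\mathbb{S}}E=\,_{\mathbb{S}}D'$; the two mixed combinations chain to $s_{0}\rightarrow -s_{0}$, i.e. to the case (i) hypothesis. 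Running the same four-case lifting from an arbitrary $s\in A$, using $\mathrm{int}(_{\mathbb{P}}D_{i})\subseteq\mathcal{O}^{+}(p)$ for all $p\in\,_{\mathbb{P}}D_{i}$ together with $_{\mathbb{P}}D_{i}\subseteq\overline{\mathcal{O}^{+}(p_{0})}$, places $s$ into $_{\mathbb{S}}D$ or $_{\mathbb{S}}D'$, so that $A=\,_{\mathbb{S}}D\cup\,_{\mathbb{S}}D'$ with $_{\mathbb{S}}D'=-\,_{\mathbb{S}}D$.

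The step I expect to be the main obstacle is the boundary of $A$: for non-interior $s$ the mixed cases deliver only the approximate reachability $-s_{0}\in\overline{\mathcal{O}^{+}(s_{0})}$ rather than the exact reachability demanded by (i), so one must rule out that these cases silently occur in the complementary regime. I would close this gap by observing that, for the interior lift $s_{0}$, the antipodal relation turns $-s_{0}\in\overline{\mathcal{O}^{+}(s_{0})}$ into mutual approximate reachability of $s_{0}$ and $-s_{0}$; maximality of $_{\mathbb{S}}D$ then forces $-s_{0}\in\,_{\mathbb{S}}D$, and since $\mathbb{P}(-s_{0})\in\mathrm{int}(_{\mathbb{P}}D_{i})$ one even gets $-s_{0}\in\mathrm{int}(_{\mathbb{S}}D)\subseteq\mathcal{O}^{+}(s_{0})$. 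Thus approximate reachability of the antipode already implies the exact hypothesis of (i), so in the complementary case the mixed combinations cannot occur for any $s$, boundary points included.

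Finally, for part (iii): by Lemma \ref{Lemma_sphere}(i) every control set with nonvoid interior on $\mathbb{S}^{n-1}$ projects into exactly one of the $k_{0}$ sets $_{\mathbb{P}}D_{j}$, the dichotomy gives at least one over each such set, and part (ii) gives at most two, whence $1\le k_{1}\le 2k_{0}\le 2n$. For the invariance claim I would show that the projection of an invariant control set is again invariant: from $\overline{_{\mathbb{S}}D}=\overline{\mathcal{O}^{+}(s)}$ and continuity and surjectivity of $\mathbb{P}$ one gets $\overline{\mathbb{P}(_{\mathbb{S}}D)}=\overline{\mathcal{O}^{+}(\mathbb{P}s)}$, and together with the control-set property $_{\mathbb{P}}D_{j}\subseteq\overline{\mathcal{O}^{+}(\mathbb{P}s)}$ this forces $\overline{_{\mathbb{P}}D_{j}}=\overline{\mathcal{O}^{+}(\mathbb{P}s)}$, i.e. $_{\mathbb{P}}D_{j}$ is invariant. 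Since exactly one $_{\mathbb{P}}D_{j}$ is invariant, all invariant control sets on $\mathbb{S}^{n-1}$ lie over it, and by part (ii) there are at most two of them.
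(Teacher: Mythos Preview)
Your proposal is correct and follows essentially the same route as the paper: part (i) is identical (Lemma \ref{Lemma_sphere}(ii) to get containment, Lemma \ref{Lemma_sphere}(i) to get equality), part (ii) uses the same four-case antipodal lifting argument, and part (iii) reduces to the uniqueness of the invariant control set on $\mathbb{P}^{n-1}$. The only organizational difference is that the paper packages the four cases via the sets $A^{\pm}=\{s:\mathbb{P}s\in{}_{\mathbb{P}}D_i,\ \pm s\in\mathcal{O}^+(s_0)\cap\mathcal{O}^-(s_0)\}$ and handles the boundary by the one-line closure argument ${}_{\mathbb{P}}D_i\subset\overline{\mathrm{int}({}_{\mathbb{P}}D_i)}$, whereas you fix ${}_{\mathbb{S}}D$ through $s_0$ first and treat the boundary by your approximate-implies-exact observation; one small point to tighten is that the implication ``$\mathbb{P}(-s_0)\in\mathrm{int}({}_{\mathbb{P}}D_i)$ gives $-s_0\in\mathrm{int}({}_{\mathbb{S}}D)$'' is better justified via the characterization ${}_{\mathbb{S}}D=\mathcal{O}^{-}(s_0)\cap\overline{\mathcal{O}^{+}(s_0)}$, which already yields $-s_0\in\mathcal{O}^{-}(s_0)$ and hence, by the antipodal relation, $-s_0\in\mathcal{O}^{+}(s_0)$.
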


\begin{proof}
(i) Suppose that there is $\mathbb{P}s_{0}\in\mathrm{int}\left(  _{\mathbb{P}%
}D_{i}\right)  $ with $-s_{0}\in\mathcal{O}^{+}(s_{0})$. By Lemma
\ref{Lemma_sphere}(ii) there is a control set $_{S}D$ on the unit sphere
containing $\{s\in\mathbb{S}^{n-1}\left\vert \mathbb{P}s\in\,_{\mathbb{P}%
}D_{i}\right.  \}$, hence the projection of $_{\mathbb{S}}D$ to projective
space contains $_{\mathbb{P}}D_{i}$. Using Lemma \ref{Lemma_sphere}(i) one
concludes that $_{\mathbb{S}}D=\{s\in\mathbb{S}^{n-1}\left\vert \mathbb{P}%
s\in\,_{\mathbb{P}}D_{i}\right.  \}$.

(ii) Fix a point $s_{0}\in\mathbb{S}^{n-1}$ with $\mathbb{P}s_{0}%
\in\mathrm{int}\left(  _{\mathbb{P}}D_{i}\right)  $ and define%
\begin{align*}
A^{+}  &  :=\left\{  s\in\mathbb{S}^{n-1}\left\vert \,\mathbb{P}%
s\in\,_{\mathbb{P}}D_{i}\text{ and }s\in\mathcal{O}^{+}(s_{0})\cap
\mathcal{O}^{-}(s_{0})\right.  \right\}  ,\\
A^{-}  &  :=\left\{  s\in\mathbb{S}^{n-1}\left\vert \,\mathbb{P}%
s\in\,_{\mathbb{P}}D_{i}\text{ and }-s\in\mathcal{O}^{+}(s_{0})\cap
\mathcal{O}^{-}(s_{0})\right.  \right\}  .
\end{align*}
The set $A^{+}$ is contained in a control set $_{\mathbb{S}}D$ and the set
$A^{-}$ is contained in a control set $_{\mathbb{S}}D^{\prime}$. Every point
$s$ with $\mathbb{P}s\in\mathrm{int}\left(  _{\mathbb{P}}D_{i}\right)  $
satisfies $s\in\mathcal{O}^{+}(s_{0})$ or $-s\in\mathcal{O}^{+}(s_{0})$ and it
also satisfies $s\in\mathcal{O}^{-}(s_{0})$ or $-s\in\mathcal{O}^{-}(s_{0})$.
If there is $s\in\mathcal{O}^{+}(s_{0})$ with $-s\in\mathcal{O}^{-}(s_{0})$
hence $s\in\mathcal{O}^{-}(-s_{0})$, it follows $-s_{0}\in\mathcal{O}%
^{+}(s_{0})$. Then by part (i) the assertion follows. The same arguments apply
if there is $s$ with $-s\in\mathcal{O}^{+}(s_{0})$ and $s\in\mathcal{O}%
^{-}(s_{0})$. Hence we may assume that either $s\in\mathcal{O}^{+}(s_{0}%
)\cap\mathcal{O}^{-}(s_{0})$ or $-s\in\mathcal{O}^{+}(s_{0})\cap
\mathcal{O}^{-}(s_{0})$. This shows that
\[
\{s\in\mathbb{S}^{n-1}\left\vert \mathbb{P}s\in\mathrm{int}\left(
_{\mathbb{P}}D_{i}\right)  \right.  \}\subset A^{+}\cup A^{-}\subset\left(
_{\mathbb{S}}D\right)  \cup\left(  _{\mathbb{S}}D^{\prime}\right)  .
\]
It follows that $\{s\in\mathbb{S}^{n-1}\left\vert \mathbb{P}s\in
\,_{\mathbb{P}}D_{i}\right.  \}\subset\overline{_{\mathbb{S}}D}\cup
\overline{_{\mathbb{S}}D^{\prime}}$, since $\mathbb{P}$ is an open map and
$_{\mathbb{P}}D_{i}\subset\overline{\mathrm{int}\left(  _{\mathbb{P}}%
D_{i}\right)  }$. By Lemma \ref{Lemma_sphere}(i) the projections of
$_{\mathbb{S}}D$ and $_{\mathbb{S}}D^{\prime}$ to $\mathbb{P}^{n-1}$ are
contained in $_{\mathbb{P}}D_{i}$, hence (\ref{sphere_1}) follows. The same
arguments with $-s_{0}$ instead of $s_{0}$ implies that $_{\mathbb{S}}%
D=-$\thinspace$_{\mathbb{S}}D^{\prime}$. If $_{\mathbb{S}}D$ or $_{\mathbb{S}%
}D^{\prime}$ is an invariant control set, then also $_{\mathbb{P}}D_{i}$ is an
invariant control set, hence there are at most two invariant control set on
$\mathbb{S}^{n-1}$.

(iii) This is a consequence of assertion (ii).
\end{proof}

Recall the following definitions from Colonius and Kliemann \cite{ColK00}. For
a solution $\varphi(t,x,u),t\geq0$, of (\ref{bilinear_h}) the Lyapunov
exponent is%
\begin{equation}
\lambda(u,x)=\underset{t\rightarrow\infty}{\lim\sup}\frac{1}{t}\log\left\Vert
\varphi(t,x,u)\right\Vert . \label{Lyap}%
\end{equation}
Observe that the Lyapunov exponents are constant on lines through the origin.

\begin{definition}
For a control set $_{\mathbb{P}}D$ in $\mathbb{P}^{n-1}$ the Floquet spectrum
is given by%
\[
\Sigma_{Fl}(_{\mathbb{P}}D)=\left\{  \lambda(u,x)\left\vert
\begin{array}
[c]{c}%
\mathbb{P}x\in\mathrm{int}\left(  _{\mathbb{P}}D\right)  \text{ and }u\text{
is piecewise constant}\\
\tau\text{-periodic for some }\tau\geq0\text{ with }\mathbb{P}\varphi
(\tau,x,u)=\mathbb{P}x
\end{array}
\right.  \right\}  ,
\]
and for a control set $_{\mathbb{S}}D$ in $\mathbb{S}^{n-1}$ the Floquet
spectrum is given by%
\[
\Sigma_{Fl}(_{\mathbb{S}}D)=\left\{  \lambda(u,x)\left\vert
\begin{array}
[c]{c}%
x\in\mathrm{int}\left(  _{\mathbb{S}}D\right)  \text{ and }u\text{ is
piecewise constant}\\
\tau\text{-periodic for some }\tau\geq0\text{ with }s(\tau,x,u)=x
\end{array}
\right.  \right\}  .
\]

\end{definition}

In the $\tau$-periodic case considered here the Lyapunov exponents satisfy
$\lambda(u,x)=\frac{1}{\tau}\log\left\Vert \varphi(\tau,x,u)\right\Vert $ for
$\left\Vert x\right\Vert =1$ and coincide with the Floquet exponents (cf.
Teschl \cite[\S 3.6]{Tes}). We note the following result.

\begin{proposition}
\label{Proposition_Floquet}If $_{S}D$ is a control set with nonvoid interior
on $\mathbb{S}^{n-1}$ that projects to a control set $_{\mathbb{P}}D$ in
$\mathbb{P}^{n-1}$, then%
\[
\Sigma_{Fl}(_{\mathbb{S}}D)=\Sigma_{Fl}(_{\mathbb{P}}D).
\]

\end{proposition}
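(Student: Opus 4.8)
The plan is to prove the two inclusions separately, the inclusion $\Sigma_{Fl}(_{\mathbb{S}}D)\subseteq\Sigma_{Fl}(_{\mathbb{P}}D)$ being immediate and the reverse requiring a period-doubling argument to absorb the sign of the Floquet multiplier. Throughout I would use the elementary dictionary between periodicity on the two spaces: writing $\|x\|=1$, the condition $s(\tau,x,u)=x$ means $\varphi(\tau,x,u)=\beta x$ with $\beta>0$, whereas $\mathbb{P}\varphi(\tau,x,u)=\mathbb{P}x$ only forces $\varphi(\tau,x,u)=\beta x$ with $\beta\neq 0$, thus permitting $\beta<0$. Since $\mathbb{P}$ is an open map, $\mathbb{P}(\mathrm{int}(_{\mathbb{S}}D))\subseteq\mathrm{int}(_{\mathbb{P}}D)$, and conversely, by the covering structure recorded in Theorem \ref{Theorem_sphere}, every $p\in\mathrm{int}(_{\mathbb{P}}D)$ has at least one lift in $\mathrm{int}(_{\mathbb{S}}D)$. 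For the inclusion $\Sigma_{Fl}(_{\mathbb{S}}D)\subseteq\Sigma_{Fl}(_{\mathbb{P}}D)$ I would take periodic sphere data $s(\tau,x,u)=x$ with $x\in\mathrm{int}(_{\mathbb{S}}D)$; then $\mathbb{P}x\in\mathrm{int}(_{\mathbb{P}}D)$ and $\mathbb{P}\varphi(\tau,x,u)=\mathbb{P}x$, so the very same pair $(u,x)$ is admissible projective data, and the number $\lambda(u,x)$ is unchanged.

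For the reverse inclusion $\Sigma_{Fl}(_{\mathbb{P}}D)\subseteq\Sigma_{Fl}(_{\mathbb{S}}D)$ I would start from projective data: $u$ is $\tau$-periodic, $\mathbb{P}x\in\mathrm{int}(_{\mathbb{P}}D)$, and $\varphi(\tau,x,u)=\beta x$ with $\beta\neq 0$. First replace $x$ by whichever of its two lifts lies in $\mathrm{int}(_{\mathbb{S}}D)$; by (\ref{homogene}) this leaves the multiplier $\beta$ and the value $\lambda(u,x)$ unchanged, recalling that Lyapunov exponents are constant on lines through the origin. If $\beta>0$, then already $s(\tau,x,u)=x$, so $(u,x)$ is admissible sphere data. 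The delicate case is $\beta<0$, where the solution returns to the antipode $-x$ and hence is not $\tau$-periodic on $\mathbb{S}^{n-1}$. Here I would exploit that a $\tau$-periodic control is also $2\tau$-periodic and use (\ref{homogene}) to compute
\[
\varphi(2\tau,x,u)=\varphi(\tau,\beta x,u)=\beta\,\varphi(\tau,x,u)=\beta^{2}x,
\]
with $\beta^{2}>0$, so that $s(2\tau,x,u)=x$ and $(u,x)$ becomes admissible sphere data of period $2\tau$.

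It then remains to match the exponents. Using the periodic formula quoted just before the proposition, the projective datum yields $\lambda(u,x)=\tfrac1\tau\log\|\varphi(\tau,x,u)\|=\tfrac1\tau\log|\beta|$, while the doubled sphere datum yields $\tfrac{1}{2\tau}\log\|\varphi(2\tau,x,u)\|=\tfrac{1}{2\tau}\log\beta^{2}=\tfrac1\tau\log|\beta|$; these coincide, as they must, since $\lambda(u,x)$ is one and the same solution invariant. The main obstacle is precisely this sign ambiguity of $\beta$ caused by the identification of antipodal points in $\mathbb{P}^{n-1}$: a projective periodic orbit need not lift to a sphere-periodic orbit, and the period-doubling is what repairs this while preserving the Floquet exponent. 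The only other point to verify carefully is that some lift of the base point falls into $\mathrm{int}(_{\mathbb{S}}D)$, which I would justify from the explicit description of $\mathbb{P}^{-1}(_{\mathbb{P}}D)$ as $_{\mathbb{S}}D\cup(-\,_{\mathbb{S}}D)$ in Theorem \ref{Theorem_sphere}, together with $\mathbb{P}$ being a local homeomorphism so that preimages of interior points are interior points.
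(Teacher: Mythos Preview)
Your proposal is correct and follows essentially the same route as the paper: the easy inclusion $\Sigma_{Fl}(_{\mathbb{S}}D)\subset\Sigma_{Fl}(_{\mathbb{P}}D)$, and for the reverse the period-doubling trick $\varphi(2\tau,x,u)=\beta^{2}x$ to convert a projective periodic orbit with negative multiplier into a sphere-periodic one with the same exponent. One minor caution: you justify that a lift of $\mathbb{P}x\in\mathrm{int}(_{\mathbb{P}}D)$ lands in $\mathrm{int}(_{\mathbb{S}}D)$ by invoking Theorem~\ref{Theorem_sphere}, which assumes the accessibility rank condition~(\ref{ARC_P}); the proposition itself does not state that hypothesis, so strictly speaking you are importing an extra assumption (the paper's own proof is also terse on this point, writing only $x\in\,_{\mathbb{S}}D$ rather than $x\in\mathrm{int}(_{\mathbb{S}}D)$).
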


\begin{proof}
The inclusion \textquotedblleft$\Sigma_{Fl}(_{\mathbb{S}}D)\subset\Sigma
_{Fl}(_{\mathbb{P}}D)$\textquotedblright\ is clear. For the converse, consider
$\mathbb{P}x\in\mathrm{int}\left(  _{\mathbb{P}}D\right)  $ and a piecewise
constant $\tau$-periodic control $u$ with $\mathbb{P}\varphi(\tau
,x,u)=\mathbb{P}x$. We may suppose that $x\in\mathbb{S}^{n-1}$, hence
$x\in\,_{\mathbb{S}}D$ or $-x\in\,_{\mathbb{S}}D$. Consider the first case. If
$\varphi(\tau,x,u)=\alpha x$ with $\alpha>0$ it follows that $\lambda
(u,x)=\frac{1}{\tau}\log\alpha\in\Sigma_{Fl}(_{\mathbb{S}}D)$. Otherwise
$\varphi(\tau,x,u)=-\alpha x$ with $\alpha>0$ and hence%
\[
\varphi(2\tau,x,u)=\varphi(\tau,\varphi(\tau,x,u),u(\tau+\cdot))=-\alpha
\left(  -\alpha x\right)  =\alpha^{2}x,
\]
implying
\[
\lambda(u,x)=\frac{1}{2\tau}\log\left\Vert \varphi(2\tau,x,u)\right\Vert
=\frac{1}{2\tau}\log\alpha^{2}=\frac{1}{\tau}\log\alpha\in\Sigma
_{Fl}(_{\mathbb{S}}D).
\]
Analogously one argues in the case $-x\in\,_{\mathbb{S}}D$.
\end{proof}

The following result describes the control sets in $\mathbb{R}^{n}$ under the
accessibility rank condition on projective space.

\begin{teo}
\label{Theorem_95}Assume that the homogeneous bilinear control system
(\ref{bilinear_h}) satisfies the accessibility rank condition (\ref{ARC_P}) on
$\mathbb{P}^{n-1}$. If a control set $_{\mathbb{S}}D_{i},i\in\{1,\ldots
,k_{1}\}$, on $\mathbb{S}^{n-1}$ satisfies $0\in\mathrm{int}\left(
\Sigma_{Fl}(_{\mathbb{S}}D_{i})\right)  $, then the cone%
\[
D_{i}=\{\alpha x\in\mathbb{R}^{n}\left\vert \alpha>0\text{ and }%
x\in\,_{\mathbb{S}}D_{i}\right.  \}
\]
generated by $_{\mathbb{S}}D_{i}$ is a control set with nonvoid interior in
$\mathbb{R}^{n}\setminus\{0\}$. At most two of the $D_{i}$ are invariant
control sets.
\end{teo}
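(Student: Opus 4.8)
The plan is to derive Theorem~\ref{Theorem_95} from Theorem~\ref{Theorem_cones} by checking, for each control set $_{\mathbb{S}}D_i$ on $\mathbb{S}^{n-1}$ with $0\in\mathrm{int}(\Sigma_{Fl}(_{\mathbb{S}}D_i))$, that both hypotheses (i) and (ii) of that theorem hold; its conclusion then gives immediately that the generated cone $D_i$ is a control set with nonvoid interior in $\mathbb{R}^n\setminus\{0\}$. So I would verify the two sets of hypotheses separately and close with the count of invariant control sets.

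For hypothesis (i), the accessibility rank condition (\ref{ARC_P}) on $\mathbb{P}^{n-1}$ carries over to $\mathbb{S}^{n-1}$ by Proposition~\ref{Proposition3.11}. Since the induced system on $\mathbb{S}^{n-1}$ is analytic, the rank condition is equivalent to local accessibility at every point, as noted in Section~\ref{Section3}; in particular every point of $\mathrm{int}(_{\mathbb{S}}D_i)$ is locally accessible, which is exactly hypothesis (i).

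The heart of the argument is hypothesis (ii). Because $0\in\mathrm{int}(\Sigma_{Fl}(_{\mathbb{S}}D_i))$, the Floquet spectrum contains an open interval $(-\varepsilon,\varepsilon)$, hence both a positive exponent $\lambda^+$ and a negative exponent $\lambda^-$. Unwinding the definition of the Floquet spectrum, a positive exponent is realized by a point $s^+\in\mathrm{int}(_{\mathbb{S}}D_i)$ and a piecewise constant $\tau^+$-periodic control $u^+$ with $s(\tau^+,s^+,u^+)=s^+$, hence $\varphi(\tau^+,s^+,u^+)=\alpha_0^+ s^+$ with $\alpha_0^+=e^{\lambda^+\tau^+}>1$; symmetrically the negative exponent yields $s^-,u^-,\tau^-$ with $\varphi(\tau^-,s^-,u^-)=\alpha^- s^-$ and $\alpha^-=e^{\lambda^-\tau^-}\in(0,1)$. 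This already produces the two periodic solutions required in (\ref{R1}). What remains, and where I expect the main difficulty, is to produce the full interval $(\alpha_0^+,\alpha_0^+ +\delta_0)$ of admissible growth factors demanded by (ii). Inspecting the proof of Theorem~\ref{Theorem_cones} shows that this interval is used only to select a single $\alpha^+$ for which $\frac{-\log\alpha^-}{\log\alpha^+}$ is irrational, so that Lemma~\ref{Lemma_dio} applies. I would therefore exploit the continuum of positive exponents available in $(-\varepsilon,\varepsilon)$: since $\alpha\mapsto\frac{\log\alpha^-}{\log\alpha}$ is continuous and strictly monotone, it suffices to realize growth factors filling a nondegenerate interval, which I would obtain by varying the positive exponent over a subinterval of $(0,\varepsilon)$ and using continuous dependence of the return factor on the control, re-closing the perturbed loop inside $\mathrm{int}(_{\mathbb{S}}D_i)$ by local accessibility. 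The delicate point is precisely this continuity-and-closing-up step, equivalently the guarantee that at least one admissible pair $(\alpha^+,\alpha^-)$ has irrational log-ratio; once it is in place, Theorem~\ref{Theorem_cones} yields that $D_i$ is a control set with nonvoid interior.

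Finally, for the count of invariant control sets I would pass to the sphere. Trajectories of (\ref{bilinear_h}) project under $\pi$ onto trajectories of (\ref{sphere}) and conversely lift, and $\pi$ is open and continuous, so $\overline{\mathcal{O}^+(\pi x)}$ on $\mathbb{S}^{n-1}$ is governed by $\pi(\overline{\mathcal{O}^+(x)})$; hence if the cone $D_i$ is an invariant control set in $\mathbb{R}^n\setminus\{0\}$, then its cross-section $_{\mathbb{S}}D_i$ is an invariant control set on $\mathbb{S}^{n-1}$. Distinct cones have distinct cross-sections, so the number of invariant $D_i$ is bounded by the number of invariant control sets on $\mathbb{S}^{n-1}$, which by Theorem~\ref{Theorem_sphere}(iii) is at most two. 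This gives the last assertion.
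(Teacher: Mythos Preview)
Your proposal is correct and follows essentially the same route as the paper: verify hypotheses (i) and (ii) of Theorem~\ref{Theorem_cones} and then invoke Theorem~\ref{Theorem_sphere}(iii) for the bound on invariant control sets. The paper handles the delicate point you single out---obtaining a genuine interval of growth factors $\alpha^+$---more tersely than you do: it records (via Proposition~\ref{Proposition_Floquet} together with \cite[Proposition~6.2.14]{ColK00}) that $\Sigma_{Fl}(_{\mathbb{S}}D_i)$ is an interval and then simply remarks that one ``may vary $\sigma^{+}$ and hence $\alpha^{+}$'', without spelling out the perturbation/re-closing step you describe; your more explicit discussion is in the same spirit and identifies exactly where the work lies.
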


\begin{proof}
By Proposition \ref{Proposition3.11}, every point in $_{\mathbb{S}}D_{i}$ is
locally accessible. Hence the first assertion follows from Theorem
\ref{Theorem_cones}, if we can show that assumption (ii) in that theorem
holds. The Floquet spectrum over a control set in projective space is a
bounded interval, cf. \cite[Proposition 6.2.14]{ColK00}. By Proposition
\ref{Proposition_Floquet} the same holds true for the Floquet spectrum of
$\Sigma_{Fl}(_{\mathbb{S}}D_{i})$. If $0\in\mathrm{int}\left(  \Sigma
_{Fl}(_{\mathbb{S}}D_{i})\right)  $, it follows that there are points
$s^{+},s^{-}\in\mathrm{int}\left(  _{\mathbb{S}}D_{i}\right)  $, controls
$u^{+},u^{-}\in\mathcal{U}$ and times $\sigma^{+},\sigma^{-}>0$ such that
\[
\varphi(\sigma^{+},s^{+},u^{+})=\alpha^{+}s^{+},\quad\varphi(\sigma^{-}%
,s^{-},u^{-})=\alpha^{-}s^{-},
\]
where $\alpha^{+}:=\exp(\sigma^{+}\lambda(u^{+},s^{+}))\in(1,\infty)$ and
$\alpha^{-}:=\exp(\sigma^{-}\lambda(u^{-},s^{-}))\in(0,1)$. This verifies
assumption (ii) of Theorem \ref{Theorem_cones} if we take into account that we
may vary $\sigma^{+}$ and hence $\alpha^{+}$. Furthermore, every invariant
control set $D$ projects to an invariant control set on $\mathbb{S}^{n-1}$,
and here there are at most two invariant control sets.
\end{proof}

\begin{remark}
Theorem \ref{Theorem_95} corrects Colonius and Kliemann \cite[Corollary
12.2.6]{ColK00}, \cite[Theorem 7]{ColK95}, where, assuming the stronger
accessibility rank condition in $\mathbb{R}^{n}\setminus\{0\}$, a similar
statement was given \ However, it was not taken into account that there may
exist two control sets on the unit sphere that project to the same control set
on projective space. It remains an open question if there are control sets
$_{\mathbb{S}}D_{i}$ with $0\not \in \mathrm{int}\left(  \Sigma_{Fl}%
(_{\mathbb{S}}D_{i})\right)  $ that generate cones which are control sets on
$\mathbb{R}^{n}\setminus\{0\}$.
\end{remark}

\begin{remark}
\label{Remark_inv_cone}Suppose that under the assumptions of Theorem
\ref{Theorem_95} an invariant control set $D_{i}$ in $\mathbb{R}^{n}%
\setminus\{0\}$ exists. Then $D_{i}\cup\{0\}$ is a closed cone in
$\mathbb{R}^{n}$ generated by an invariant control set on the unit sphere. If
the system is not controllable, this cone does not coincide with
$\mathbb{R}^{n}$, hence it is a nontrivial proper closed positively invariant
cone in $\mathbb{R}^{n}$. On the other hand, Do Rocio, San Martin, and Santana
\cite[Section 6]{DoRoSMS06} present an example in $\mathbb{R}^{4}$, which is
not controllable and which also does not possess a nontrivial proper closed
\emph{convex} cone $W$ in $\mathbb{R}^{n}$ which is positively invariant. Here
the convexity of $W$ is crucial: Such cones are pointed, i.e., $W\cap
(-W)=\{0\}$, cf. \cite[Lemma 4.1]{DoRoSMS06}. For an invariant control set
$D_{i}$ as in Theorem \ref{Theorem_95} the cone $D_{i}\cup\{0\}$ need not be
pointed (and hence not convex), since the invariant control set may contain
the real eigenspace for a complex conjugate pair of eigenvalues of $A(u)$.
Observe that here the convex closure of this cone, which is also positively
invariant, coincides with $\mathbb{R}^{n}$. An example is the
three-dimensional linear oscillator in Colonius and Kliemann \cite[Example
10.2.3]{ColK00}. The existence of nontrivial proper closed convex positively
invariant cones in $\mathbb{R}^{n}$ is analyzed in \cite[Theorem 4.2, Theorem
4.5]{DoRoSMS06}.
\end{remark}

Not all control sets on the unit sphere generate cones that are control sets
in $\mathbb{R}^{n}\setminus\{0\}$ as indicated by the following proposition,

\begin{proposition}
Assume that the homogeneous bilinear control system (\ref{bilinear_h})
satisfies the accessibility rank condition (\ref{ARC_P}) on $\mathbb{S}^{n-1}$
and let $_{\mathbb{S}}D$ be a control set in $\mathbb{S}^{n-1}$ with nonvoid
interior. Then the following assertion holds.

If the supremum of $\left\{  \lambda(u,x)\left\vert s(t,x,u)\in\,_{\mathbb{S}%
}D\text{ for all }t\geq0\right.  \right\}  $ is less than $0$ or the infimum
is greater than $0$, then the cone%
\[
C=\{\alpha x\in\mathbb{R}^{n}\left\vert \alpha>0\text{ and }x\in
\,_{\mathbb{S}}D\right.  \}
\]
is not a control set.
\end{proposition}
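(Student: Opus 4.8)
The plan is to argue by contradiction: assume the cone $C$ is a control set and exhibit a point of $C$ that cannot be approximately reached, contradicting property (ii) of Definition \ref{def:Dset}. The two hypotheses being symmetric, I treat $\kappa^{+}:=\sup\{\lambda(u,x)\mid s(t,x,u)\in\,_{\mathbb{S}}D\text{ for all }t\geq0\}<0$ in detail and indicate the change for positive infimum at the end. Fix $s_{0}\in\mathrm{int}(_{\mathbb{S}}D)$ with $\|s_{0}\|=1$, so that $s_{0}\in\mathrm{int}(C)$. If $C$ were a control set, property (ii) would give $\alpha s_{0}\in\overline{\mathcal{O}^{+}(s_{0})}$ for every $\alpha>0$, so I may pick controls $u_{k}\in\mathcal{U}$ and times $T_{k}>0$ with $\varphi(T_{k},s_{0},u_{k})\to\alpha s_{0}$.

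First I would show that, for large $k$, the projected trajectories stay in $_{\mathbb{S}}D$. Writing $q_{k}:=s(T_{k},s_{0},u_{k})$, homogeneity (\ref{homogene}) gives $q_{k}\to s_{0}\in\mathrm{int}(_{\mathbb{S}}D)$, hence $q_{k}\in\mathrm{int}(_{\mathbb{S}}D)$ for large $k$. For any intermediate point $p=s(t,s_{0},u_{k})$, $t\in[0,T_{k}]$, one has $p\in\mathcal{O}^{+}(s_{0})$ on the sphere, and continuing along $u_{k}$ to $q_{k}$ and then using approximate controllability inside $_{\mathbb{S}}D$ from $q_{k}$ to $s_{0}$ yields $s_{0}\in\overline{\mathcal{O}^{+}(p)}$. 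Thus $p$ and $s_{0}$ are mutually approximately reachable; by maximality of the control set $_{\mathbb{S}}D$ through the interior point $s_{0}$ this forces $p\in\,_{\mathbb{S}}D$. Hence the whole lifted trajectory $t\mapsto\varphi(t,s_{0},u_{k})$ remains in the cone $C$.

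The contradiction then comes from a uniform exponential estimate. Since the accessibility rank condition on the sphere holds (Proposition \ref{Proposition3.11}) and $_{\mathbb{S}}D$ has nonvoid interior, the spectral theory of the projected flow over a control set (Colonius and Kliemann \cite{ColK00}) identifies $\kappa^{+}$ with the top of the Morse (equivalently Floquet) spectrum over $_{\mathbb{S}}D$ and yields, for any $\varepsilon>0$ with $\kappa^{+}+\varepsilon<0$, a constant $K\geq1$ such that $\|\varphi(t,x,u)\|\leq Ke^{(\kappa^{+}+\varepsilon)t}\|x\|$ for every trajectory with $s(\cdot,x,u)\in\,_{\mathbb{S}}D$ on $[0,\infty)$. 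Applied to the cone-trajectories from $s_{0}$ of the previous paragraph this gives $\|\varphi(T_{k},s_{0},u_{k})\|\leq K$ for all $k$; choosing the target radius $\alpha>K$ then contradicts $\varphi(T_{k},s_{0},u_{k})\to\alpha s_{0}$. In the symmetric case $\inf>0$ the corresponding estimate from below keeps the norm above a positive constant, so points $\alpha s_{0}$ with small $\alpha$ are unreachable. Either way property (ii) fails and $C$ is not a control set.

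\textbf{The main obstacle} is the uniform estimate of the last paragraph: the hypothesis controls only the pointwise, asymptotic Lyapunov exponents, whereas I need a bound valid uniformly over all finite-time trajectories staying in $_{\mathbb{S}}D$. Bridging this gap is exactly what the equality of the extreme Lyapunov exponents with the Morse-spectrum endpoints over a control set provides; without it a single finite-time excursion could momentarily increase (resp. decrease) the radius despite the sign condition. An alternative route would close the approximating trajectories into exact periodic orbits in $_{\mathbb{S}}D$ of radial multiplier greater than $1$ via local accessibility, directly producing a positive element of the spectrum against $\kappa^{+}<0$, in the spirit of the constructions behind Theorem \ref{Theorem_cones} and Theorem \ref{Theorem_95}; there the difficulty shifts to controlling the duration and radial multiplier of the connecting segment, which local accessibility alone does not guarantee.
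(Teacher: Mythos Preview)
Your write-up is considerably more elaborate than the paper's. The paper's argument is three sentences: (a) exact controllability in $\mathrm{int}(_{\mathbb S}D)$ says that from any $x$ with $x/\|x\|\in\,_{\mathbb S}D$ one can hit some point $\alpha y$ on every ray with $y/\|y\|\in\mathrm{int}(_{\mathbb S}D)$; (b) every trajectory whose projection stays in $_{\mathbb S}D$ for all $t\ge0$ has $\|\varphi(t,x,u)\|\to0$ (resp.\ $\to\infty$) by the sign hypothesis; (c) ``hence the assertion follows.'' No uniform exponential estimate, no Morse spectrum, no explicit contradiction with a specific target is invoked.

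Your careful second paragraph (confining the approximating trajectories to $_{\mathbb S}D$) is correct and is in fact the missing link behind the paper's terse ``hence the assertion follows'': any trajectory starting at $s_0\in\mathrm{int}(_{\mathbb S}D)$ and ending near $s_0$ has its projection trapped in $_{\mathbb S}D$, so the relevant piece of $\mathcal O^+(s_0)$ inside the cone is produced exclusively by trajectories to which the sign hypothesis applies. That part genuinely sharpens the paper.

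The genuine gap is exactly the one you name. Your third paragraph asserts a \emph{uniform} bound $\|\varphi(t,x,u)\|\le Ke^{(\kappa^++\varepsilon)t}\|x\|$ and justifies it by identifying $\kappa^+$ with the top of the Morse/Floquet spectrum over $_{\mathbb S}D$. That identification is \emph{not} a consequence of the stated hypotheses: the proposition only assumes a sign on the supremum of pointwise Lyapunov exponents, and the paper's own Remark immediately after this proposition says explicitly that equality of this supremum with the Floquet-spectrum endpoints requires extra conditions (dimension two, or an inner-pair condition for generic $\rho$). In general the Morse spectrum can strictly contain the closure of the Lyapunov spectrum, so $\sup\Sigma_{Ly}<0$ does not by itself force the uniform finite-time bound you need. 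Thus your route, while transparent, leans on a step the hypotheses do not supply; the paper sidesteps this by staying at the level of individual confined trajectories and their asymptotics, though it does not spell out how the asymptotic statement alone blocks approximate reachability of large-radius targets. Your alternative ``closing into a periodic orbit'' idea is closer in spirit to what the hypotheses actually give (a positive Floquet exponent would directly contradict $\kappa^+<0$ since periodic confined trajectories are in the Lyapunov set), but you rightly note that controlling the radial factor of the connecting arc is the remaining difficulty there.
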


\begin{proof}
Exact controllability to points in the interior of $_{\mathbb{S}}D$ implies
that for all $x,y\in\mathbb{R}^{n}\setminus\{0\}$ with $\frac{x}{\left\Vert
x\right\Vert }\in\mathrm{\,}_{\mathbb{S}}D$ and $\frac{y}{\left\Vert
y\right\Vert }\in\mathrm{int}(_{\mathbb{S}}D)$ there are $\alpha,T>0$ and
$u\in\mathcal{U}$ with $\varphi(T,x,u)=\alpha y$. Now consider $(x,u)$ with
$s(t,x,u)\in\,_{\mathbb{S}}D$ for all $t\geq0$. Then in the first case the
trajectory in $\mathbb{R}^{n}$ satisfies $\left\Vert \varphi(t,x,u\right\Vert
\rightarrow0$ and in the second case it satisfies $\left\Vert \varphi
(t,x,u\right\Vert \rightarrow\infty$. Hence the assertion follows.
\end{proof}

\begin{remark}
We refer to Colonius and Kliemann \cite{ColK00} for a discussion when the
supremum and the infimum of $\left\{  \lambda(u,x)\left\vert s(t,x,u)\in
\,_{\mathbb{S}}D\text{ for all }t\geq0\right.  \right\}  $ coincide with the
supremum and the infimum of $\Sigma_{Fl}(_{\mathbb{S}}D)$, respectively. For
dimension $n=2$, \cite[Theorem 10.1.1]{ColK00} shows that these equalities
hold if the accessibility rank condition holds in $\mathbb{P}^{1}$. For
general $n\in\mathbb{N}$ suppose that the control range is given by $\rho
\cdot\Omega,\rho\geq0$, and the following \textquotedblleft$\rho$-inner-pair
condition" for the system on $\mathbb{S}^{n-1}$ holds:%
\[
\text{For all }\rho^{\prime}>\rho\text{ every }(u,x)\in\mathcal{U}%
\times\mathbb{S}^{n-1}\text{ there is }t>0\text{ with }s(t,x,u)\in
\mathrm{int}(\mathcal{O}^{+}(x)).
\]
Then \cite[Theorem 7.3.26]{ColK00} implies that for all $\rho\in(0,\infty)$
except for at most $n-1$ $\rho$-values the systems with control range
$\rho\cdot\Omega$ have the property that the equalities for the suprema and
the infima hold for all control sets.
\end{remark}

The following example illustrates Theorem \ref{Theorem_95}, cf. also
\cite[Examples 10.1.7 and 10.2.1]{ColK00} where for linear oscillators the
spectral properties and the control sets in projective space are determined.

\begin{example}
\label{Example6}Consider the damped linear oscillator%
\[
\ddot{x}+3\dot{x}+(1+u(t))x=0\text{ with }u(t)\in\Omega=[-\rho,\rho],
\]
where $\rho\in\left(  1,\frac{5}{4}\right)  $. Hence the system equation is
given by
\begin{equation}
\left[
\begin{array}
[c]{c}%
\dot{x}\\
\dot{y}%
\end{array}
\right]  =\left(  \left[
\begin{array}
[c]{cc}%
0 & 1\\
-1 & -3
\end{array}
\right]  +u\left[
\begin{array}
[c]{cc}%
0 & 0\\
-1 & 0
\end{array}
\right]  \right)  \left[
\begin{array}
[c]{c}%
x\\
y
\end{array}
\right]  =\left[
\begin{array}
[c]{cc}%
0 & 1\\
-1-u & -3
\end{array}
\right]  \left[
\begin{array}
[c]{c}%
x\\
y
\end{array}
\right]  . \label{Example6_a}%
\end{equation}
The eigenvalues of $A(u)$ satisfy%
\[
\det(\lambda I-A(u))=\det\left(
\begin{array}
[c]{cc}%
\lambda & -1\\
1+u & \lambda+3
\end{array}
\right)  =\lambda^{2}+3\lambda+1+u=0,
\]
and one obtains two real eigenvalues%
\[
\lambda_{1}(u)=-\frac{3}{2}-\sqrt{\frac{5}{4}-u}\text{ and }\lambda
_{2}(u)=-\frac{3}{2}+\sqrt{\frac{5}{4}-u}%
\]
with corresponding eigenvectors $(x,\lambda_{1}(u)x)^{\top}$ and
$(x,\lambda_{2}(u)x)^{\top},x\not =0$. Note that $\lambda_{2}(u)>0$ if and
only if $u\in\lbrack-\rho,-1)$. Since for all $u\in\lbrack-\rho,\rho]$ one has
$\lambda_{1}(u)<\lambda_{2}(u)$ the projected trajectories in $\mathbb{P}^{1}$
go from the eigenspace for $\lambda_{1}(u)$ to the eigenspace for $\lambda
_{2}(u)$. A short computation shows that there is an open control set
$_{\mathbb{P}}D_{1}$ and a closed invariant control set $_{\mathbb{P}}D_{2}$
in projective space $\mathbb{P}^{1}$ given by the projections of
\[
\left\{  \left[
\begin{array}
[c]{c}%
x\\
\lambda x
\end{array}
\right]  \left\vert x\not =0,\,\lambda\in\Sigma_{Fl}(_{\mathbb{P}}%
D_{1})\right.  \right\}  ,\quad\left\{  \left[
\begin{array}
[c]{c}%
x\\
\lambda x
\end{array}
\right]  \left\vert x\not =0,\,\lambda\in\overline{\Sigma_{Fl}(_{\mathbb{P}%
}D_{2})}\right.  \right\}  ,
\]
resp., where by \cite[Theorem 10.1.1]{ColK00} the Floquet spectra are%
\begin{align*}
\Sigma_{Fl}(_{\mathbb{P}}D_{1})  &  =\left(  -\frac{3}{2}-\sqrt{\frac{5}%
{4}+\rho},-\frac{3}{2}-\sqrt{\frac{5}{4}-\rho}\right)  \subset(-\infty,0),\\
\Sigma_{Fl}(_{\mathbb{P}}D_{2})  &  =\left(  -\frac{3}{2}+\sqrt{\frac{5}%
{4}-\rho},-\frac{3}{2}+\sqrt{\frac{5}{4}+\rho}\right)  .
\end{align*}
The control sets in $\mathbb{P}^{1}$ induce four control sets on the unit
circle $\mathbb{S}^{1}$. For $_{\mathbb{P}}D_{2}$ one obtains the two control
sets $_{\mathbb{S}}D_{2}^{\prime}=-\,_{\mathbb{S}}D_{2}$. Since $u=-1\in
(-\rho,\rho)$ and $0=\lambda_{2}(-1)\in\mathrm{int}\left(  \Sigma
_{Fl}(_{\mathbb{P}}D_{2})\right)  $, Theorem \ref{Theorem_95} implies that
there are two invariant control sets in $\mathbb{R}^{2}\setminus\{0\}$, they
are the cones%
\[
D_{2}=\left\{  \alpha\left[
\begin{array}
[c]{c}%
x\\
y
\end{array}
\right]  \left\vert \alpha>0,\left[
\begin{array}
[c]{c}%
x\\
y
\end{array}
\right]  \in~_{\mathbb{S}}D_{2}\right.  \right\}  \text{,~}D_{2}^{\prime
}=\left\{  \alpha\left[
\begin{array}
[c]{c}%
x\\
y
\end{array}
\right]  \left\vert \alpha>0,\left[
\begin{array}
[c]{c}%
x\\
y
\end{array}
\right]  \in~_{\mathbb{S}}D_{2}^{\prime}\right.  \right\}  .
\]

\end{example}

Next we present a necessary and sufficient condition for controllability on
$\mathbb{R}^{n}\setminus\{0\}$. The infimal and supremal Lyapunov exponents,
cf. (\ref{Lyap}), are%
\[
\kappa^{\ast}=\inf_{u\in\mathcal{U}}\inf_{x\not =0}\lambda(u,x)\text{ and
}\kappa=\sup_{u\in\mathcal{U}}\sup_{x\not =0}\lambda(u,x),
\]
resp. The following result improves Colonius and Kliemann \cite[Corollary
12.2.6(iii)]{ColK00}, where the accessibility rank condition is assumed in
$\mathbb{R}^{n}\setminus\{0\}$.

\begin{corollary}
\label{Corollary_approximate}Assume that the homogeneous bilinear control
system (\ref{bilinear_h}) satisfies the accessibility rank condition
(\ref{ARC_P}) on $\mathbb{P}^{n-1}$. Then it is controllable in $\mathbb{R}%
^{n}\setminus\{0\}$ if and only if the induced system on $\mathbb{P}^{n-1}$ is
controllable and $\kappa^{\ast}<0<\kappa$.
\end{corollary}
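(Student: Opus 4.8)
The plan is to characterize controllability on $\mathbb{R}^{n}\setminus\{0\}$ through a single control set on the sphere together with the position of $0$ in its Floquet spectrum, and then to promote approximate to exact controllability by \cite[Theorem 1]{CanS21}. For the necessity direction, suppose the system is controllable on $\mathbb{R}^{n}\setminus\{0\}$. Since $\mathbb{P}$ carries trajectories to trajectories of the induced system and is surjective, projecting any exact steering from $x$ to $y$ shows at once that the induced system on $\mathbb{P}^{n-1}$ is controllable. To obtain $\kappa^{\ast}<0<\kappa$, fix $x$ with $\|x\|=1$; controllability gives $u^{+}\in\mathcal{U}$ and $T^{+}>0$ with $\varphi(T^{+},x,u^{+})=2x$, and extending $u^{+}$ periodically and using (\ref{homogene}) yields $\varphi(kT^{+},x,\tilde{u}^{+})=2^{k}x$, so that $\lambda(\tilde{u}^{+},x)=(\log 2)/T^{+}>0$ and hence $\kappa>0$. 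Steering instead $x$ to $\tfrac{1}{2}x$ produces in the same way a Lyapunov exponent $-(\log 2)/T^{-}<0$, so $\kappa^{\ast}<0$.

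For sufficiency, assume the induced system on $\mathbb{P}^{n-1}$ is controllable and $\kappa^{\ast}<0<\kappa$. By \cite[Theorem 1]{BacV13} the induced system on $\mathbb{S}^{n-1}$ is then also controllable, so the whole sphere is the unique control set $_{\mathbb{S}}D=\mathbb{S}^{n-1}$. By Proposition \ref{Proposition_Floquet} we have $\Sigma_{Fl}(_{\mathbb{S}}D)=\Sigma_{Fl}(\mathbb{P}^{n-1})$, which by \cite[Proposition 6.2.14]{ColK00} is a bounded interval. The crucial step is to show that its endpoints are exactly $\kappa^{\ast}$ and $\kappa$; granting this, the hypothesis $\kappa^{\ast}<0<\kappa$ says precisely that $0\in\mathrm{int}\left(\Sigma_{Fl}(_{\mathbb{S}}D)\right)$, so Theorem \ref{Theorem_95} applies and the cone generated by $_{\mathbb{S}}D=\mathbb{S}^{n-1}$, which is all of $\mathbb{R}^{n}\setminus\{0\}$, is a control set. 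Thus $\mathbb{R}^{n}\setminus\{0\}$ is a single set of complete approximate controllability, and \cite[Theorem 1]{CanS21} upgrades this to exact controllability on $\mathbb{R}^{n}\setminus\{0\}$.

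The main obstacle is therefore the identification $\inf\Sigma_{Fl}(_{\mathbb{S}}D)=\kappa^{\ast}$ and $\sup\Sigma_{Fl}(_{\mathbb{S}}D)=\kappa$. The inequalities $\kappa^{\ast}\leq\inf\Sigma_{Fl}$ and $\sup\Sigma_{Fl}\leq\kappa$ are immediate, because Floquet exponents are Lyapunov exponents. For the converse I would take $(u,x)$ with $\lambda(u,x)$ close to $\kappa$ and a sequence $T_{k}\to\infty$ along which $\tfrac{1}{T_{k}}\log\|\varphi(T_{k},x,u)\|\to\lambda(u,x)$, then use controllability of the induced system on $\mathbb{P}^{n-1}$ together with the accessibility rank condition (\ref{ARC_P}) and compactness of $\mathbb{P}^{n-1}$ to steer $\mathbb{P}\varphi(T_{k},x,u)$ back to $\mathbb{P}x$ within a time bounded uniformly in $k$. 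Concatenation produces projectively periodic controls whose Floquet exponents tend to $\lambda(u,x)$, since the closing-up segment changes $\log\|\cdot\|$ by a bounded amount over a period of length tending to infinity; letting $\lambda(u,x)\to\kappa$ gives $\sup\Sigma_{Fl}=\kappa$, and the symmetric construction gives $\inf\Sigma_{Fl}=\kappa^{\ast}$. Equivalently, this is the statement that the closure of the Floquet spectrum equals the Morse spectrum of the single chain control set $\mathbb{P}^{n-1}$, cf. \cite[Chapter 6]{ColK00}. The delicate point is precisely the uniform bound on the return time, which guarantees that the growth accrued while closing up is negligible against the diverging period.
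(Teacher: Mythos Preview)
Your proposal is correct and follows essentially the same route as the paper: Bacciotti--Vivalda to pass from projective to spherical controllability, Theorem~\ref{Theorem_95} to obtain $\mathbb{R}^{n}\setminus\{0\}$ as a control set, and Cannarsa--Sigalotti to upgrade approximate to exact controllability.

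Two minor differences are worth recording. For the necessity direction, your direct construction (steer $x$ to $2x$, periodize via homogeneity (\ref{homogene})) is more elementary and self-contained than the paper's route, which invokes asymptotic and then exponential null controllability via \cite[Corollary 12.2.3]{ColK00} together with time reversal. For sufficiency, the identification of the endpoints of $\Sigma_{Fl}(\mathbb{P}^{n-1})$ with $\kappa^{\ast}$ and $\kappa$---which you isolate as the crucial step and sketch via a closing-up argument---is not proved in the paper at all: it is simply quoted (in the Remark immediately following the Corollary) as \cite[Theorem 7.1.5(iv)]{ColK00}. Your sketch is the standard proof of that result, and you correctly flag the uniform bound on the return time on the compact manifold $\mathbb{P}^{n-1}$ (under the accessibility rank condition and controllability) as the one nontrivial ingredient; this bound does hold and is part of the machinery in \cite[Chapters 6--7]{ColK00} you already cite.
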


\begin{proof}
Controllability on $\mathbb{R}^{n}\setminus\{0\}$ implies controllability on
$\mathbb{P}^{n-1}$. Furthermore, asymptotic null controllability to
$0\in\mathbb{R}^{n}$, and hence exponential null controllability follows by
\cite[Corollary 12.2.3]{ColK00}. Thus $\kappa^{\ast}<0$ and, by time reversal,
also $\kappa>0$ follows.

Conversely, controllability on $\mathbb{P}^{n-1}$ implies by Bacciotti and
Vivalda \cite[Theorem 1]{BacV13} that $_{\mathbb{S}}D=\mathbb{S}^{n-1}$ is a
control set. By Theorem \ref{Theorem_95}, it follows that $\mathbb{R}%
^{n}\setminus\{0\}$ is a control set. This implies that for every initial
point $x\not =0$ the reachable set $\mathcal{O}^{+}(x)$ is dense in
$\mathbb{R}^{n}\setminus\{0\}$, i.e., approximate controllability holds. For
homogeneous bilinear control systems, Cannarsa and Sigalotti \cite[Theorem
1]{CanS21} shows that approximate controllability implies controllability in
$\mathbb{R}^{n}\setminus\{0\}$. This completes the proof.
\end{proof}

\begin{remark}
The condition $\kappa^{\ast}<0<\kappa$ can be replaced by the requirement that
$0\in\mathrm{int}\left(  \Sigma_{Fl}(\mathbb{P}^{n-1})\right)  =(\kappa^{\ast
},\kappa)$. This follows, since by \cite[Theorem 7.1.5(iv)]{ColK00} the
Floquet spectrum is an interval and satisfies $\overline{\Sigma_{Fl}%
(\mathbb{P}^{n-1})}=[\kappa^{\ast},\kappa]$ if $\mathbb{P}^{n-1}$ is a control set.
\end{remark}

\begin{remark}
For control systems on semisimple Lie groups, San Martin \cite[Proposition
5.6]{SanM93} shows the following result. Let $G\subset Sl(n,\mathbb{R})$ be a
semisimple, connected, and noncompact group acting transitively on
$\mathbb{R}^{n}\setminus\{0\}$ and let $S$ be a semigroup with nonvoid
interior in $G$. Then $S$ is controllable on $\mathbb{R}^{n}\setminus\{0\}$ if
and only if $S$ is controllable in $\mathbb{P}^{n-1}$. In this case
$0\in(\kappa^{\ast},\kappa)=\mathrm{int}\left(  \Sigma_{Fl}(\mathbb{P}%
^{n-1})\right)  $.
\end{remark}

\section{Equilibria of affine systems\label{Section4}}

In the rest of this paper we discuss control sets for affine systems of the
form (\ref{affine}). We begin by analyzing the equilibria.

For each control value $u\in\Omega$, an associated equilibrium point of system
(\ref{affine}) is a state $x_{u}$ that satisfies
\begin{equation}
0=A(u)x_{u}+Cu+d. \label{equilibrium1}%
\end{equation}
If for $u\in\Omega$ there is a solution $x_{u}$ of (\ref{equilibrium1}) and
$\det A(u)=0$, then every point in the nontrivial affine subspace $x_{u}+\ker
A(u)$ is an equilibrium. If there is $u\in\Omega$ with $Cu+d=0$, then equation
(\ref{equilibrium1}) always has the solution $x_{u}=0$. If $\det A(u)\not =0$,
then there exists a unique equilibrium of (\ref{affine}) given by%
\begin{equation}
x_{u}=-A(u)^{-1}[Cu+d]. \label{equilibrium2}%
\end{equation}
The following simple but useful result shows that for constant control $u$ the
phase portrait of the inhomogeneous equation is obtained by shifting the
origin to $x_{u}$.

\begin{proposition}
\label{Proposition_shift}Consider for constant control $u\in\Omega$ a solution
$\varphi(t,x,u),t\geq0$, of the inhomogeneous equation (\ref{affine}) and let
$x_{u}$ be an associated equilibrium. Then $\varphi(t,x,u)-x_{u}$ is a
solution of the homogeneous equation $\dot{x}(t)=A(u)x(t)$ with initial value
$x-x_{u}$.
\end{proposition}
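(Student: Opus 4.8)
The plan is to define the shifted curve $y(t) := \varphi(t,x,u) - x_u$ and verify by direct differentiation that it solves the homogeneous equation $\dot{y} = A(u)y$ with initial value $x - x_u$; there is essentially no obstacle here, and the whole content of the statement is that the equilibrium condition is precisely what cancels the inhomogeneous terms. First I would record the two facts that drive the computation. Since $\varphi(\cdot,x,u)$ is a solution of (\ref{affine}) for the \emph{constant} control $u$, it is absolutely continuous and satisfies, for almost all $t \geq 0$,
\begin{equation*}
\dot{\varphi}(t,x,u) = A(u)\varphi(t,x,u) + Cu + d,
\end{equation*}
while the defining relation (\ref{equilibrium1}) for the associated equilibrium gives $A(u)x_u + Cu + d = 0$.

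Next I would differentiate $y$, noting that the constant $x_u$ contributes nothing, substitute the equation for $\dot{\varphi}$, and then add and subtract $A(u)x_u$ so that the affine part collapses by the equilibrium relation:
\begin{align*}
\dot{y}(t) &= \dot{\varphi}(t,x,u) = A(u)\varphi(t,x,u) + Cu + d \\
&= A(u)\bigl(\varphi(t,x,u) - x_u\bigr) + \bigl(A(u)x_u + Cu + d\bigr) = A(u)\,y(t),
\end{align*}
valid for almost all $t \geq 0$. Evaluating at $t = 0$ gives $y(0) = \varphi(0,x,u) - x_u = x - x_u$, so $y$ is the solution of $\dot{x} = A(u)x$ through $x - x_u$, as claimed.

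The only point that deserves a word of care, rather than a genuine difficulty, is that $\varphi$ satisfies (\ref{affine}) only almost everywhere; but since $y$ is absolutely continuous and satisfies a linear equation for a.e. $t$, it coincides with the genuine (everywhere-differentiable) solution of $\dot{x} = A(u)x$ with the same initial value, which is enough to conclude. I would rely here on the standing assumptions in Section \ref{Section2} guaranteeing existence and uniqueness of solutions, so that identifying $y$ as \emph{the} homogeneous solution through $x - x_u$ is unambiguous.
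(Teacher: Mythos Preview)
Your proof is correct and follows essentially the same approach as the paper: a one-line direct differentiation of $\varphi(t,x,u)-x_u$, using the equilibrium relation $A(u)x_u+Cu+d=0$ to cancel the inhomogeneous terms. The paper's proof is slightly terser (it omits the explicit check of the initial value and the remark on a.e.\ differentiability), but the substance is identical.
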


\begin{proof}
We compute%
\[
\frac{d}{dt}\left[  \varphi(t,x,u)-x_{u}\right]  =A(u)\left[  \varphi
(t,x,u)-x_{u}\right]  +A(u)x_{u}+Cu+d=A(u)\left[  \varphi(t,x,u)-x_{u}\right]
.
\]

\end{proof}

The following proposition shows that the affine control system (\ref{affine})
is equivalent to an inhomogeneous bilinear system, if there is $u^{0}\in
\Omega$ with $Cu^{0}+d=0$.

\begin{proposition}
\label{Proposition_inhomogeneous}Suppose that there is $u^{0}\in\Omega$ with
$Cu^{0}+d=0$ and consider%
\begin{equation}
\dot{x}(t)=A(u^{0})x(t)+\sum_{i=1}^{m}v_{i}(t)B_{i}x(t)+Cv(t)\text{ with
}v(t)\in\Omega^{\prime}:=\Omega-u^{0}, \label{affine_hom}%
\end{equation}
with trajectories denoted by $\psi(\cdot,x,v)$. Then the trajectories
$\varphi(\cdot,x,u),u\in\mathcal{U}$, of (\ref{affine}) satisfy $\varphi
(t,x,u)=\psi(t,x,v),t\in\mathbb{R}$, with controls $v(t)=u(t)-u^{0}%
,t\in\mathbb{R}$.
\end{proposition}

\begin{proof}
One computes for a solution $x(t)=\varphi(t,x,u),t\in\mathbb{R}$, of
(\ref{affine})
\begin{align*}
\dot{x}(t)  &  =Ax(t)+\sum_{i=1}^{m}u_{i}^{0}B_{i}x(t)+\sum_{i=1}^{m}\left(
u_{i}(t)-u_{i}^{0}\right)  B_{i}x(t)+C(u(t)-u^{0})+Cu^{0}+d\\
&  =A(u^{0})x(t)+\sum_{i=1}^{m}v_{i}(t)B_{i}x(t)+Cv(t).
\end{align*}

\end{proof}

We introduce the following notation for the set of equilibria,%
\begin{align*}
E  &  =\{x\in\mathbb{R}^{n}\left\vert 0=A(u)x+Cu+d\text{ for some }u\in
\Omega\right.  \},\\
E_{0}  &  =\{x\in\mathbb{R}^{n}\left\vert 0=A(u)x+Cu+d\text{ for some }%
u\in\mathrm{int}\left(  \Omega\right)  \right.  \}\text{.}%
\end{align*}
Note that $\overline{E_{0}}=E$ if $\Omega=\,\overline{\mathrm{int}\left(
\Omega\right)  }$. The following discussion of systems with scalar controls
follows essentially Mohler \cite[Section 2.4]{Mohler}.

\begin{teo}
\label{Theorem2}Consider system (\ref{affine}) with scalar control and assume
that for all $u\in\Omega$ it follows from $\det(A+uB)=0$ that there is no
solution to equation (\ref{equilibrium1}).

(i) Suppose that there is $u^{0}\in\Omega=\mathbb{R}$ with $A+u^{0}B$
nonsingular. Then there are at most $1\leq r\leq n$ control values $v^{i}%
\in\mathbb{R}$ such that the equilibrium set is given by%
\[
E=\{x_{u}\left\vert u\in\mathbb{R}\setminus\{v^{1},\ldots,v^{r}\}\right.  \}
\]
and is the union of at most $n+1$ smooth curves. These curves have no finite endpoints.

(ii) If $\Omega$ is a possibly unbounded interval, the equilibrium set $E$ has
at most $n+1$ connected components.
\end{teo}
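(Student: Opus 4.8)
The plan is to reduce everything to the scalar polynomial $p(u):=\det(A+uB)$ together with the explicit formula (\ref{equilibrium2}). Since the control is scalar we write $C=c$ and $A(u)=A+uB$, so that (\ref{equilibrium1}) reads $(A+uB)x_{u}=-(uc+d)$. The map $u\mapsto p(u)$ is a polynomial of degree at most $n$, and the hypothesis in (i) that $A+u^{0}B$ is nonsingular for some $u^{0}$ says $p\not\equiv0$; hence $p$ has finitely many distinct real roots $v^{1}<\cdots<v^{r}$ with $r\le n$. By the standing assumption of the theorem, at each root $v^{i}$ (where $\det=0$) equation (\ref{equilibrium1}) has no solution, so no equilibrium lies over $v^{i}$; while for every $u\notin\{v^{1},\dots,v^{r}\}$ the matrix $A+uB$ is invertible and (\ref{equilibrium2}) gives the unique equilibrium $x_{u}=-(A+uB)^{-1}(uc+d)$. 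This already yields the asserted description $E=\{x_{u}\mid u\in\mathbb{R}\setminus\{v^{1},\dots,v^{r}\}\}$.

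Next I would treat smoothness and the curve count. On the open set $\mathbb{R}\setminus\{v^{1},\dots,v^{r}\}$, which is a disjoint union of $r+1\le n+1$ open intervals, matrix inversion is real-analytic where $A+uB$ is invertible, so $u\mapsto x_{u}$ is real-analytic; restricting it to each of the $r+1$ intervals exhibits $E$ as a union of at most $n+1$ smooth curves. For the claim that these curves have no finite endpoints I would argue by contradiction: the only finite boundary values of the parameter intervals are the roots $v^{i}$, and if $x_{u}$ had a finite limit $x^{*}$ as $u\to v^{i}$, then passing to the limit in $(A+uB)x_{u}=-(uc+d)$ would give $(A+v^{i}B)x^{*}=-(v^{i}c+d)$, i.e.\ a solution of (\ref{equilibrium1}) at $u=v^{i}$, contradicting the hypothesis. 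To upgrade ``no finite limit'' to genuine blow-up I would use that each component of $x_{u}=-\mathrm{adj}(A+uB)(uc+d)/p(u)$ is a rational function of $u$: near $v^{i}$ each component either converges or tends to $\pm\infty$, and if all converged then $x_{u}$ would converge, which we just excluded; hence some component, and therefore $\|x_{u}\|$, tends to $\infty$, so the curve escapes to infinity and terminates at no finite point.

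For part (ii) essentially the same bookkeeping applies with $\Omega$ in place of $\mathbb{R}$. If $p\equiv0$ on the interval $\Omega$ (which forces $p\equiv0$, since a nondegenerate interval is infinite), then $\det(A+uB)=0$ for every $u\in\Omega$ and the hypothesis makes $E$ empty, so there is nothing to prove. Otherwise $p$ has at most $n$ roots, so $\Omega$ minus its finitely many roots is a disjoint union of at most $r+1\le n+1$ subintervals; on each, $u\mapsto x_{u}$ is continuous, hence its image is connected, and $E$ is the union of these at most $n+1$ connected pieces. Since a union of $k$ nonempty connected sets has at most $k$ connected components (each piece lies in a single component and the pieces cover $E$), it follows that $E$ has at most $n+1$ connected components.

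The routine ingredients are the degree bound on $p$, the real-analyticity of $u\mapsto x_{u}$, and the component-counting for connectedness. The one genuinely load-bearing step is the no-finite-endpoint assertion in (i): the crux is the clean use of the hypothesis, since a finite limiting equilibrium over a root would be exactly the forbidden solution of (\ref{equilibrium1}), and the rationality of $x_{u}$ in $u$ is what converts ``no finite limit'' into actual divergence $\|x_{u}\|\to\infty$.
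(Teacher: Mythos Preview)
Your proof is correct and follows essentially the same route as the paper: identify the roots of the degree-$\le n$ polynomial $p(u)=\det(A+uB)$, use the explicit formula $x_u=-(A+uB)^{-1}(uc+d)$ on the complement to obtain the $r+1\le n+1$ smooth curves, and rule out finite endpoints by passing to the limit in $(A+uB)x_u=-(uc+d)$ to manufacture a forbidden solution at $u=v^i$. The paper's endpoint argument uses only a bounded-subsequence extraction (if $x_{u^k}$ stayed bounded along some $u^k\to v^i$, a convergent subsequence would yield the contradiction), which already forces $\|x_u\|\to\infty$; your additional rationality-of-components step is a nice way to see this explicitly but is not strictly needed.
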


\begin{proof}
First note that $x_{u}=-(A+uB)^{-1}[Cu+d]$ describes a smooth curve as long as
$\det(A+uB)\not =0$. Since $\det(A+uB)$ is a nontrivial polynomial in $u$ of
degree at most $n$, there are most $n$ real roots $v^{1},\ldots,v^{r},0\leq
r\leq n$, of $\det(A+uB)=0$. By our assumption the vectors $Cv^{i}+d$ are not
in the range of $A+v^{i}B$.

Consider a sequence $u^{k}\rightarrow v^{i}$ for some $i$. If $x_{u^{k}}$
remains bounded, we may assume that it converges to some $y\in\mathbb{R}^{n}$.
For $k\rightarrow\infty$ we find%
\[
(A+v^{i}B)y=-(Cv^{i}+d)
\]
contradicting the assumption of the theorem. It follows that $x_{u^{k}}$
becomes unbounded for $k\rightarrow\infty$.

(ii) If $\Omega=[u_{\ast},u^{\ast}],u_{\ast}<u^{\ast}$, the equilibrium set
$E=\{x_{u}\left\vert u\in\Omega\setminus\{v^{1},\ldots,v^{r}\}\right.  \}$
consists of at most $n+1$ smooth curves having no finite endpoints, with the
possible exception of the equilibria corresponding to the minimum and maximum
values of $u$ in $\Omega$, i.e., $u=u_{\ast},u^{\ast}$. If there is more than
one curve constituting $E$, then the finite end points which are the
equilibria $x_{u_{\ast}}$ and $x_{u^{\ast}}$ must lie on different curves.
Hence the assertion also follows in this case. Similarly, the assertion
follows for intervals which are unbounded to one side.
\end{proof}

The following example is used in Rink and Mohler \cite[Example 2]{RinkM68} and
Mohler \cite[Example 2 on page 32]{Mohler} as an example for a system that is
not controllable. It illustrates the result above.

\begin{example}
\label{Example2_RinkMohler}Consider the control system given by%
\[
\left[
\begin{array}
[c]{c}%
\dot{x}\\
\dot{y}%
\end{array}
\right]  =\left[
\begin{array}
[c]{c}%
2u(t)x+y\\
x+2u(t)y+u(t)
\end{array}
\right]  .
\]
With
\[
A=\left[
\begin{array}
[c]{cc}%
0 & 1\\
1 & 0
\end{array}
\right]  ,\quad B=\left[
\begin{array}
[c]{cc}%
2 & 0\\
0 & 2
\end{array}
\right]  ,\quad C=\left[
\begin{array}
[c]{c}%
0\\
1
\end{array}
\right]  ,
\]
this is the inhomogeneous bilinear control system%
\[
\left[
\begin{array}
[c]{c}%
\dot{x}\\
\dot{y}%
\end{array}
\right]  =\left[
\begin{array}
[c]{cc}%
2u & 1\\
1 & 2u
\end{array}
\right]  \left[
\begin{array}
[c]{c}%
x\\
y
\end{array}
\right]  +\left[
\begin{array}
[c]{c}%
0\\
1
\end{array}
\right]  u=\left(  A+uB\right)  \left[
\begin{array}
[c]{c}%
x\\
y
\end{array}
\right]  +Cu.
\]
The eigenvalues of $A(u)=A+uB$ are given by $0=\det\left(  A+uB\right)
=4u^{2}-1$, hence $\lambda_{1}(u)=2u+1>\lambda_{2}(u)=2u-1$. One finds
$\lambda_{2}(u)>0$ for $u>\frac{1}{2}$ and $\lambda_{1}(u)<0$ for $u<-\frac
{1}{2}$. For $u\in\left(  -\frac{1}{2},\frac{1}{2}\right)  $ one gets
$\lambda_{1}(u)>0$ and $\lambda_{2}(u)<0$, hence the matrix $A+uB$ is
hyperbolic here.

For every $u\in\mathbb{R}$, the eigenspace for $\lambda_{1}(u)$ is
$\mathrm{Diag}_{1}:=\{(z,z)^{\top}\left\vert z\in\mathbb{R}\right.  \}$ and
the eigenspace for $\lambda_{2}(u)$ is $\mathrm{Diag}_{2}:=\{(z,-z)^{\top
}\left\vert z\in\mathbb{R}\right.  \}$. For $\left\vert u\right\vert
\not =\frac{1}{2}$ the equilibria are given by%
\begin{equation}
\left[
\begin{array}
[c]{c}%
x_{u}\\
y_{u}%
\end{array}
\right]  =-\left(  A+uB\right)  ^{-1}Cu=\frac{-1}{4u^{2}-1}\left[
\begin{array}
[c]{cc}%
2u & -1\\
-1 & 2u
\end{array}
\right]  \left[
\begin{array}
[c]{c}%
0\\
1
\end{array}
\right]  u=\frac{u}{4u^{2}-1}\left[
\begin{array}
[c]{c}%
1\\
-2u
\end{array}
\right]  . \label{x_u}%
\end{equation}
Thus we see that%
\begin{equation}
y_{u}=-2ux_{u}\text{ for }\left\vert u\right\vert \not =\frac{1}{2}.
\label{components}%
\end{equation}
The assumption of Theorem \ref{Theorem2} is satisfied, since for $u=\pm
\frac{1}{2}$ there is no solution to
\[
\left[
\begin{array}
[c]{c}%
0\\
0
\end{array}
\right]  =(A+uB)\left[
\begin{array}
[c]{c}%
x\\
y
\end{array}
\right]  +Cu=\left[
\begin{array}
[c]{cc}%
\pm1 & 1\\
1 & \pm1
\end{array}
\right]  \left[
\begin{array}
[c]{c}%
x\\
y
\end{array}
\right]  +\left[
\begin{array}
[c]{c}%
0\\
1
\end{array}
\right]  \left(  \pm\frac{1}{2}\right)  .
\]
For the asymptotics of the equilibria, equation (\ref{components}) shows that
$(x_{u},y_{u})^{\top}$ approach the line $\mathrm{Diag}_{2}$ for
$u\rightarrow\frac{1}{2}$ and the line $\mathrm{Diag}_{1}$ for $u\rightarrow
-\frac{1}{2}$. In both cases, the equilibria become unbounded. For
$u\rightarrow\pm\infty$, one obtains that the equilibria approach
$(0,-\frac{1}{2})^{\top}$.

This discussion shows that the set of equilibria for unbounded control $u$
consists of the following three connected branches%
\begin{align*}
\mathcal{B}_{1}  &  =\left\{  \left[
\begin{array}
[c]{c}%
x_{u}\\
y_{u}%
\end{array}
\right]  \left\vert u\in\left(  -\frac{1}{2},\frac{1}{2}\right)  \right.
\right\}  ,\quad\mathcal{B}_{2}=\left\{  \left[
\begin{array}
[c]{c}%
x_{u}\\
y_{u}%
\end{array}
\right]  \left\vert u\in\left(  -\infty,-\frac{1}{2}\right)  \right.
\right\}  ,\\
\mathcal{B}_{3}  &  =\left\{  \left[
\begin{array}
[c]{c}%
x_{u}\\
y_{u}%
\end{array}
\right]  \left\vert u\in\left(  \frac{1}{2},\infty\right)  \right.  \right\}
.
\end{align*}
The equilibria in $\mathcal{B}_{2}$ and $\mathcal{B}_{3}$ both approach
$(0,-\frac{1}{2})^{\top}$ for $\left\vert u\right\vert \rightarrow\infty$; cf.
also Mohler \cite[Figure 2.1 on p. 33]{Mohler} or Rink and Mohler \cite[Figure
1]{RinkM68}. The equilibria in $\mathcal{B}_{2}$ are stable, those in
$\mathcal{B}_{3}$ are totally unstable, and those in $\mathcal{B}_{1}$ yield
one positive and one negative eigenvalue.
\end{example}

\section{Control sets and equilibria of affine systems\label{Section5}}

The controllability properties near equilibria will be analyzed assuming that
the linearized control systems are controllable. This yields results on the
control sets around equilibria.

In order to describe the properties of the system linearized about an
equilibrium, we recall the following classical result from Lee and\ Markus
\cite[Theorem 1 on p. 366]{LeeM67}.

\begin{theorem}
\label{Theorem_LeeMarkus67}Consider the control process in $\mathbb{R}^{n}$%
\begin{equation}
\dot{x}=f(x,u), \label{f}%
\end{equation}
where $f$ is $C^{1}$ and suppose that $f(0,0)=0$ where $0$ is in the interior
of the control range $\Omega$. Then the controllable set $\mathcal{O}^{-}(0)$
is open if, with $A=\frac{\partial f}{\partial x}(0,0)$ and $B=\frac{\partial
f}{\partial u}(0,0)$,%
\begin{equation}
\mathrm{rank}[B,AB,\ldots,A^{n-1}B]=n. \label{Kalman0}%
\end{equation}

\end{theorem}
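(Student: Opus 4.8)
The plan is to reduce the global statement to a local one, namely that $0\in\mathrm{int}\,(\mathcal{O}^{-}(0))$ (every state close enough to the equilibrium can be steered exactly to $0$), and then to bootstrap this local controllability into openness of the whole controllable set. For the bootstrap, suppose $x_{1}\in\mathcal{O}^{-}(0)$, so that $\varphi(T_{1},x_{1},u_{1})=0$ for some admissible $u_{1}$ and some $T_{1}\geq0$. By continuous dependence of solutions on the initial state, the map $x\mapsto\varphi(T_{1},x,u_{1})$ is continuous, so for $x$ in a small ball around $x_{1}$ the endpoint $\varphi(T_{1},x,u_{1})$ lies in the neighborhood $N\subset\mathcal{O}^{-}(0)$ of $0$ furnished by the local statement. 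Concatenating $u_{1}$ on $[0,T_{1}]$ with a control steering $\varphi(T_{1},x,u_{1})$ to $0$ then shows $x\in\mathcal{O}^{-}(0)$; hence a whole ball around $x_{1}$ lies in $\mathcal{O}^{-}(0)$, which is therefore open.

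It remains to prove $0\in\mathrm{int}\,(\mathcal{O}^{-}(0))$. I would fix $T>0$ and consider, for $\Omega$-valued controls, the endpoint map $\Psi(x,u):=\varphi(T,x,u)$; since $f(0,0)=0$ the reference pair $(x,u)=(0,0)$ gives $\Psi(0,0)=0$. Differentiating with respect to the control along this reference trajectory, the variational equation is $\dot{y}=Ay+Bv$ with $y(0)=0$, so that $D_{u}\Psi(0,0)[v]=\int_{0}^{T}e^{A(T-s)}Bv(s)\,ds$. The range of this operator is exactly the controllable subspace $\mathrm{span}\,[B,AB,\ldots,A^{n-1}B]$, which equals $\mathbb{R}^{n}$ by the Kalman condition (\ref{Kalman0}); thus $D_{u}\Psi(0,0)$ is onto $\mathbb{R}^{n}$. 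Choose bounded measurable controls $v_{1},\ldots,v_{n}$ whose images $D_{u}\Psi(0,0)[v_{i}]$ form a basis of $\mathbb{R}^{n}$, and restrict to the finite-dimensional family $u=\sum_{i=1}^{n}\alpha_{i}v_{i}$. The map $(x,\alpha)\mapsto\varphi(T,x,\sum_{i}\alpha_{i}v_{i})$ is then $C^{1}$ near $(0,0)$ in $\mathbb{R}^{n}\times\mathbb{R}^{n}$, vanishes at $(0,0)$, and has invertible derivative in $\alpha$ there (its columns are the chosen basis). By the implicit function theorem there is a $C^{1}$ map $x\mapsto\alpha(x)$ with $\alpha(0)=0$ and $\varphi(T,x,\sum_{i}\alpha_{i}(x)v_{i})=0$ for all $x$ near $0$. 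Since $0\in\mathrm{int}(\Omega)$ and the $v_{i}$ are bounded, the control $\sum_{i}\alpha_{i}(x)v_{i}$ takes values in $\Omega$ for $x$ small enough; hence every such $x$ is steered to $0$, yielding the neighborhood $N$ of $0$ inside $\mathcal{O}^{-}(0)$.

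The main technical point is the assertion that $\Psi$ is $C^{1}$ with the above variational formula for its control-derivative; this rests on the standard $C^{1}$ dependence of solutions of $\dot{x}=f(x,u)$ on the initial data and on the control parameters, which is available precisely because $f$ is $C^{1}$. The only other delicacy is keeping the constructed controls admissible, i.e.\ $\Omega$-valued, which is exactly what the hypothesis $0\in\mathrm{int}(\Omega)$ guarantees once the finite-dimensional reduction has been made and $\alpha(x)$ is known to be small. Reducing to a finite-dimensional control space before invoking the implicit function theorem lets me apply the classical theorem in $\mathbb{R}^{n}$ and avoid any Banach-space inverse mapping machinery.
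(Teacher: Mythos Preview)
Your proof is correct and follows the standard route to this classical result. Note, however, that the paper does not actually prove Theorem~\ref{Theorem_LeeMarkus67}; it merely quotes it as a known result from Lee and Markus \cite[Theorem~1 on p.~366]{LeeM67} and then applies it. So there is no paper-proof to compare against beyond the citation. Your argument---local controllability at the equilibrium via the linearization and the implicit function theorem on a finite-dimensional slice of controls, then propagation of openness along trajectories by continuous dependence on initial data---is precisely the standard proof one finds in the literature (and essentially what Lee and Markus do). The technical points you flag (smoothness of the endpoint map under $C^{1}$ hypotheses on $f$, and admissibility of the constructed controls thanks to $0\in\mathrm{int}(\Omega)$) are the right ones to worry about, and you handle them correctly.
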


Condition (\ref{Kalman0}) is the familiar Kalman condition for controllability
of the linearized system $\dot{x}=\frac{\partial f}{\partial x}(0,0)x+\frac
{\partial f}{\partial u}(0,0)u$ (without control restriction), cf. Sontag
\cite[Theorem 3, p. 89]{Son98}.

We apply this result to affine control systems and obtain that the reachable
set and the controllable set for an equilibrium are open, if the linearized
system is controllable.

\begin{proposition}
\label{Proposition_open}Consider the affine system (\ref{affine}) and let
$x_{u}$ be an equilibrium for a control value $u\in\mathrm{int}\left(
\Omega\right)  $, where the rank condition
\begin{equation}
\mathrm{rank}[B^{\prime}(u),A(u)B^{\prime}(u),\ldots,\left(  A(u)\right)
^{n-1}B^{\prime}(u)]=n \label{Kalman2}%
\end{equation}
holds with $B^{\prime}(u)$ defined by
\begin{equation}
B^{\prime}(u)=C+\left[  B_{1}x_{u},\ldots,B_{m}x_{u}\right]  . \label{B_prime}%
\end{equation}
Then the reachable set $\mathcal{O}^{+}(x_{u})$ and the controllable set
$\mathcal{O}^{-}(x_{u})$ are open. If $A(u)=A+\sum_{i=1}^{m}u_{i}B_{i}$ is
invertible, then%
\[
B^{\prime}(u)=C-\left[  B_{1}A(u)^{-1}(Cu+d),\ldots,B_{m}A(u)^{-1}%
(Cu+d)\right]  .
\]

\end{proposition}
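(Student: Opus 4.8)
The plan is to reduce the statement to Theorem~\ref{Theorem_LeeMarkus67} by a simultaneous shift of state and control that moves the equilibrium to the origin. Writing the right-hand side of (\ref{affine}) as $f(x,w)=A(w)x+Cw+d$ (using $w$ for the control variable, since $u$ now denotes the fixed equilibrium value), I would set $\tilde{x}=x-x_{u}$ and $\tilde{w}=w-u$ and consider $g(\tilde{x},\tilde{w}):=f(\tilde{x}+x_{u},\tilde{w}+u)$. The map $g$ is smooth (indeed affine in each argument), the equilibrium relation (\ref{equilibrium1}) gives $g(0,0)=A(u)x_{u}+Cu+d=0$, and $u\in\mathrm{int}(\Omega)$ guarantees $0\in\mathrm{int}(\Omega-u)$. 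This is the change of variables underlying Proposition~\ref{Proposition_shift}, now carried out in the full control system rather than for constant controls.

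Next I would identify the linearization at the shifted origin. Since $\partial f/\partial x=A(w)$, one has $\partial g/\partial\tilde{x}(0,0)=A(u)$, while the $j$-th column of $\partial g/\partial\tilde{w}(0,0)$ equals $\partial f/\partial w_{j}(x_{u},u)=B_{j}x_{u}+c_{j}$. Hence the input matrix of the linearization is exactly $B'(u)=C+[B_{1}x_{u},\ldots,B_{m}x_{u}]$ from (\ref{B_prime}), and the Kalman matrix of the linearized $g$-system coincides with the matrix in (\ref{Kalman2}), which has rank $n$ by hypothesis. Theorem~\ref{Theorem_LeeMarkus67} then yields that the controllable set to $0$ for the $g$-system is open; since $\tilde{x}\mapsto\tilde{x}+x_{u}$ is a homeomorphism of $\mathbb{R}^{n}$, it follows that $\mathcal{O}^{-}(x_{u})$ is open.

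To handle the reachable set I would invoke Lemma~\ref{Lemma_time_reversal}, which identifies $\mathcal{O}^{+}(x_{u})$ with the controllable set to $x_{u}$ of the time-reversed system $\dot{x}=-(Ax+d)-\sum_{i=1}^{m}v_{i}(B_{i}x+c_{i})$. This is again an affine system, with data $-A,-B_{i},-c_{i},-d$; its equilibrium equation at $x_{u}$ is the negative of (\ref{equilibrium1}), so $x_{u}$ is an equilibrium for the same control value $v=u\in\mathrm{int}(\Omega)$, with system matrix $-A(u)$ and input matrix $-B'(u)$ at that point. Repeating the shift-and-linearize argument for the reversed system, the columns of its Kalman matrix are $(-1)^{k+1}A(u)^{k}B'(u)$, $k=0,\ldots,n-1$, which differ from those of (\ref{Kalman2}) only by signs and hence span the same column space; thus the rank is again $n$. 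Applying Theorem~\ref{Theorem_LeeMarkus67} to the reversed system shows its controllable set to $x_{u}$ is open, whence $\mathcal{O}^{+}(x_{u})$ is open.

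Finally, when $A(u)$ is invertible the stated formula for $B'(u)$ is immediate from the explicit equilibrium (\ref{equilibrium2}): substituting $x_{u}=-A(u)^{-1}[Cu+d]$ into (\ref{B_prime}) gives $B_{i}x_{u}=-B_{i}A(u)^{-1}(Cu+d)$ and hence the claimed expression. I do not expect a serious obstacle in this proof; the argument is essentially bookkeeping around Theorem~\ref{Theorem_LeeMarkus67}. The only points requiring genuine care are verifying that the shifted system meets all of its hypotheses (notably $0\in\mathrm{int}(\Omega-u)$ and $g(0,0)=0$) and checking that the time-reversed system retains both the equilibrium at $x_{u}$ for control value $u$ and the full-rank Kalman condition after the sign changes.
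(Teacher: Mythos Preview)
Your proof is correct and follows essentially the same approach as the paper: shift the equilibrium to the origin, compute the linearization to identify $A(u)$ and $B'(u)$, apply Theorem~\ref{Theorem_LeeMarkus67} for openness of $\mathcal{O}^{-}(x_{u})$, and invoke time reversal via Lemma~\ref{Lemma_time_reversal} for $\mathcal{O}^{+}(x_{u})$. The paper is terser on the time-reversal step (it simply cites the lemma), whereas you spell out that the reversed affine system has Kalman matrix with columns $(-1)^{k+1}A(u)^{k}B'(u)$ and hence the same rank; this is a harmless elaboration of the same idea.
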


\begin{proof}
First we convince ourselves that Theorem \ref{Theorem_LeeMarkus67} can be
applied to arbitrary equilibria $(x^{0},u^{0})$ with $u^{0}\in\mathrm{int}%
\left(  \Omega\right)  $ instead of $(0,0)$. In fact, define $\tilde
{f}(x,u):=f(x+x^{0},u+u^{0})$. Then $(0,0)$ is an equilibrium of%
\begin{equation}
\dot{x}(t)=\tilde{f}(x(t),u(t))\text{ with control range }\Omega-u^{0},
\label{f_tilde}%
\end{equation}
and the control value $u=0$ is in $\mathrm{int}(\Omega-u^{0})$. The solutions
$\psi(t,0,u),t\geq0$, of (\ref{f_tilde}) are given by $\varphi(t,x^{0}%
,u+u^{0})-x^{0}$, since $\varphi(0,x^{0},u+u^{0})-x^{0}=0$ and
\[
\frac{d}{dt}\left[  \varphi(t,x^{0},u+u^{0})-x^{0}\right]  =f(\varphi
(t,x^{0},u+u^{0}),u(t)+u^{0})=\tilde{f}(\varphi(t,x^{0},u+u^{0})-x^{0},u(t)).
\]
Hence $\mathcal{O}^{-}(x^{0})$ coincides with the controllable set
$\mathcal{\tilde{O}}^{-}(0)$ of (\ref{f_tilde}). The rank condition
(\ref{Kalman0}) for (\ref{f_tilde}) involves%
\[
A=\frac{\partial\tilde{f}}{\partial x}(0,0)=\frac{\partial f}{\partial
x}(x^{0},u^{0}),B=\frac{\partial\tilde{f}}{\partial u}(0,0)=\frac{\partial
f}{\partial u}(x^{0},u^{0}).
\]
For system (\ref{affine}) $f(x,u)=A(u)x+Cu+d$ and for an equilibrium $x_{u}$
we find $\frac{\partial f}{\partial x}(x_{u},u)=A(u)$ and%
\[
\frac{\partial f}{\partial u}(x_{u},u)=C+\frac{\partial}{\partial u}\sum
_{i=1}^{m}u_{i}B_{i}x_{u}=C+\left[  B_{1}x_{u},\ldots,B_{m}x_{u}\right]  .
\]
By (\ref{Kalman2}) the rank condition (\ref{Kalman0}) is satisfied. Applying
Theorem \ref{Theorem_LeeMarkus67} we conclude that the controllable set
$\mathcal{O}^{-}(x_{u})$ is open. By time reversal, cf. Lemma
\ref{Lemma_time_reversal}, also the reachable set $\mathcal{O}^{+}(x_{u})$ is open.

If $A(u)$ is invertible, the formula for $B^{\prime}(u)$ follows from
(\ref{equilibrium2}).
\end{proof}

The following proposition shows that the controllability rank condition
(\ref{Kalman2}) holds generically for controls $u\in\mathbb{R}^{m}$ if it
holds in some $u^{0}$.

\begin{proposition}
\label{Proposition_generic}Assume that $A(u)$ is invertible for all
$u\in\mathbb{R}^{m}$ and that the rank condition (\ref{Kalman2}) holds for
some $u^{0}\in\mathbb{R}^{m}$. Then (\ref{Kalman2}) holds for all $u$ in an
open and dense subset of $\mathbb{R}^{m}$.
\end{proposition}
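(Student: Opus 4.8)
The plan is to show that the set of $u\in\mathbb{R}^m$ where the rank condition (\ref{Kalman2}) \emph{fails} is contained in the zero set of a nontrivial real-analytic function, which is automatically closed with empty interior, so its complement is open and dense. First I would observe that since $A(u)$ is invertible for all $u$, the equilibrium $x_u=-A(u)^{-1}[Cu+d]$ depends real-analytically (indeed rationally) on $u$ by Cramer's rule, since the entries of $A(u)^{-1}$ are ratios of polynomials in $u$ with the nowhere-vanishing denominator $\det A(u)$. Consequently the matrix $B'(u)=C+[B_1x_u,\ldots,B_mx_u]$ has real-analytic entries, and so does the $n\times nm$ Kalman controllability matrix
\[
K(u):=[B'(u),A(u)B'(u),\ldots,A(u)^{n-1}B'(u)].
\]

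Next I would encode the rank condition algebraically. The condition $\mathrm{rank}\,K(u)=n$ holds if and only if at least one of the $\binom{nm}{n}$ maximal ($n\times n$) minors of $K(u)$ is nonzero. Each such minor is a real-analytic function of $u$, call them $p_1(u),\ldots,p_N(u)$. Thus (\ref{Kalman2}) fails precisely on the common zero set $Z=\{u\mid p_1(u)=\cdots=p_N(u)=0\}=\bigcap_j p_j^{-1}(0)$. By hypothesis the rank condition holds at $u^0$, so some minor satisfies $p_{j_0}(u^0)\neq0$; in particular $p_{j_0}$ is not identically zero. Therefore $Z\subset p_{j_0}^{-1}(0)$, and it suffices to show that the zero set of a not-identically-zero real-analytic function on $\mathbb{R}^m$ has empty interior (it is automatically closed by continuity).

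Finally I would invoke the standard fact that the zero set of a real-analytic function $p_{j_0}\not\equiv0$ on a connected open set is nowhere dense: if it contained an open ball then all derivatives of $p_{j_0}$ would vanish there, and by the identity theorem for real-analytic functions $p_{j_0}$ would vanish on the whole connected domain $\mathbb{R}^m$, contradicting $p_{j_0}(u^0)\neq0$. Hence $p_{j_0}^{-1}(0)$ is closed with empty interior, so its complement $U:=\{u\mid p_{j_0}(u)\neq0\}$ is open and dense, and on $U$ the rank condition (\ref{Kalman2}) holds. This $U$ is the desired open dense subset.

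The main obstacle — really the only substantive point — is justifying the real-analyticity of $K(u)$ cleanly, i.e. making sure the inversion $A(u)^{-1}$ is handled correctly; once that is in place the argument reduces to the elementary and well-known properties of zero sets of real-analytic functions. One could alternatively state everything with polynomials by clearing the denominators $\det A(u)$ (multiplying each minor $p_j$ by a suitable power of $\det A(u)$, which is nonvanishing and hence does not change the zero set on $\mathbb{R}^m$), reducing to the nowhere-density of zero sets of nonzero polynomials; I would likely present the polynomial version since it avoids even citing real-analyticity and relies only on the fact that a nonzero polynomial on $\mathbb{R}^m$ has a zero set of measure zero, hence empty interior.
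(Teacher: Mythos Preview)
Your proposal is correct and follows essentially the same approach as the paper: the paper clears denominators by defining $B''(u)=\det A(u)\cdot B'(u)$ so that the Kalman matrix has polynomial entries, and then observes that the failure set lies in a proper algebraic variety (citing Sontag for the open-and-dense complement). Your polynomial alternative at the end is exactly this; the real-analytic version you give first is just a slight variant of the same idea.
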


\begin{proof}
Define
\[
B^{\prime\prime}(u):=\det A(u)C-\left[  B_{1}\mathrm{Adj}(A(u))(Cu+d),\ldots
,B_{m}\mathrm{Adj}(A(u))(Cu+d)\right]  ,
\]
where $\mathrm{Adj}(A(u))$ is defined by $\left(  A(u)\right)  ^{-1}\det
A(u)=\mathrm{Adj}(A(u))$. Condition (\ref{Kalman2}) holds if and only if%
\begin{equation}
\mathrm{rank}[B^{\prime\prime}(u),A(u)B^{\prime\prime}(u),\ldots,\left(
A(u)\right)  ^{n-1}B^{\prime\prime}(u)]=n. \label{Kalman5}%
\end{equation}
The entries of the matrix in (\ref{Kalman5}) are polynomial in the variables
$u_{1},\ldots,u_{m}$. Using the assumption one finds that the set of
$u\in\mathbb{R}^{m}$ violating (\ref{Kalman5}) is contained in a proper
algebraic variety; the complement of such a set is open and dense in
$\mathbb{R}^{m}$ (this follows in the same way as the genericity of the
controllability rank condition (\ref{Kalman0}), cf. Sontag \cite[Proposition
3.3.12]{Son98}).
\end{proof}

\begin{remark}
\label{Remark5.4}For a system of the form (\ref{affine}) with scalar control
the assumptions of Proposition \ref{Proposition_generic} imply that there are
at most finitely many\ $u$ such that the rank condition (\ref{Kalman2}) is
violated. This follows taking into account that for scalar $u$ the entries of
the matrix in (\ref{Kalman5}) are polynomial in the scalar variable $u$, hence
there are at most finitely many zeros.
\end{remark}

A consequence of Proposition \ref{Proposition_open} is the following first
result on control sets.

\begin{proposition}
\label{Proposition3}Consider the affine system (\ref{affine}) and assume that
the rank condition (\ref{Kalman2}) is satisfied for some $u\in\mathrm{int}%
(\Omega)$. Then the set $D=\mathcal{O}^{-}{(x_{u})}\cap\overline
{\mathcal{O}^{+}{(x_{u})}}$ is a control set of system (\ref{affine})
containing the equilibrium $x_{u}$ in the interior.
\end{proposition}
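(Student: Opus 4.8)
The plan is to verify the three defining properties of a control set for $D=\mathcal{O}^{-}(x_{u})\cap\overline{\mathcal{O}^{+}(x_{u})}$, using Proposition \ref{Proposition_open} as the single analytic input. First I would record that, since the rank condition (\ref{Kalman2}) holds at $u\in\mathrm{int}(\Omega)$, Proposition \ref{Proposition_open} makes both $\mathcal{O}^{+}(x_{u})$ and $\mathcal{O}^{-}(x_{u})$ open. Because $x_{u}$ is an equilibrium for the constant control $u$, one has $x_{u}=\varphi(0,x_{u},u)\in\mathcal{O}^{+}(x_{u})\cap\mathcal{O}^{-}(x_{u})$, so $D$ is nonvoid; moreover $\mathcal{O}^{-}(x_{u})$ is an open neighborhood of $x_{u}$ and $\mathcal{O}^{+}(x_{u})\subset\overline{\mathcal{O}^{+}(x_{u})}$ is open near $x_{u}$, whence $x_{u}\in\mathrm{int}(D)$.

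Second, for property (ii) I would use transitivity of reachability together with continuous dependence. I first note the elementary fact that $x\in\overline{\mathcal{O}^{+}(x_{u})}$ implies $\mathcal{O}^{+}(x)\subset\overline{\mathcal{O}^{+}(x_{u})}$: approximating $x$ by $x_{n}\in\mathcal{O}^{+}(x_{u})$, continuous dependence on the initial value shows every $\varphi(t,x,v)$ is a limit of points $\varphi(t,x_{n},v)\in\mathcal{O}^{+}(x_{n})\subset\mathcal{O}^{+}(x_{u})$. Now for $x\in D$ we have $x\in\mathcal{O}^{-}(x_{u})$, i.e. $x_{u}\in\mathcal{O}^{+}(x)$, so by transitivity $\mathcal{O}^{+}(x_{u})\subset\mathcal{O}^{+}(x)$ and therefore $D\subset\overline{\mathcal{O}^{+}(x_{u})}\subset\overline{\mathcal{O}^{+}(x)}$, which is (ii).

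Third, for property (i) I would steer $x$ to $x_{u}$ and then rest there. Since $x\in\mathcal{O}^{-}(x_{u})$ there are $T>0$ and $v\in\mathcal{U}$ with $\varphi(T,x,v)=x_{u}$; afterwards I keep the constant control $u$, so $\varphi(t,x_{u},u)=x_{u}$ for $t\geq T$. It remains to check that the connecting arc stays in $D$: for $t\in[0,T]$ the point $\varphi(t,x,v)$ lies in $\mathcal{O}^{-}(x_{u})$ (continue $v$ to reach $x_{u}$ at time $T$) and in $\overline{\mathcal{O}^{+}(x_{u})}$ (by the inclusion $\mathcal{O}^{+}(x)\subset\overline{\mathcal{O}^{+}(x_{u})}$ established above, since $x\in D\subset\overline{\mathcal{O}^{+}(x_{u})}$), hence in $D$.

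Finally, maximality (iii) is where the openness coming from the rank condition is genuinely used, and I expect this to be the main obstacle. Given $D'\supset D$ satisfying (i) and (ii), take $y\in D'$. Applying (ii) with base point $x_{u}\in D'$ gives $y\in\overline{\mathcal{O}^{+}(x_{u})}$, while applying (ii) with base point $y$ gives $x_{u}\in\overline{\mathcal{O}^{+}(y)}$. The delicate step is to upgrade this approximate reachability to exact reachability: since $\mathcal{O}^{-}(x_{u})$ is an \emph{open} neighborhood of $x_{u}$ and $x_{u}\in\overline{\mathcal{O}^{+}(y)}$, the reachable set $\mathcal{O}^{+}(y)$ meets $\mathcal{O}^{-}(x_{u})$ in some point $z$; concatenating a trajectory $y\to z$ with one $z\to x_{u}$ yields $x_{u}\in\mathcal{O}^{+}(y)$, i.e. $y\in\mathcal{O}^{-}(x_{u})$. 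Together with $y\in\overline{\mathcal{O}^{+}(x_{u})}$ this gives $y\in D$, so $D'=D$. This last upgrade from approximate to exact controllability is exactly the point at which openness of the controllable set, and hence the Kalman-type condition (\ref{Kalman2}), cannot be dispensed with.
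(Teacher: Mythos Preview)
Your proof is correct and follows essentially the same approach as the paper: both use Proposition~\ref{Proposition_open} to obtain openness of $\mathcal{O}^{\pm}(x_u)$, then verify properties (i) and (ii) via transitivity of reachability and the continuous-dependence inclusion $\mathcal{O}^{+}(x)\subset\overline{\mathcal{O}^{+}(x_{u})}$, and finally establish maximality by using openness of $\mathcal{O}^{-}(x_u)$ to upgrade $x_u\in\overline{\mathcal{O}^{+}(y)}$ to $y\in\mathcal{O}^{-}(x_u)$. The only cosmetic difference is that the paper first embeds $D_0$ into a maximal set and then shows equality, whereas you verify maximality of $D$ directly.
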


\begin{proof}
By Proposition \ref{Proposition_open} the sets $\mathcal{O}^{-}{(x_{u})}$ and
$\mathcal{O}^{+}{(x_{u})}$ are open neighborhoods of $x_{u}$, hence it follows
that $x_{u}$ is in the interior of the set $D_{0}:=\mathcal{O}^{-}{(x_{u}%
)}\cap\overline{\mathcal{O}^{+}{(x_{u})}}$.

Let $x\in D_{0}$. Then $x_{u}\in\mathcal{O}^{+}{(x)}$ and therefore
$\mathcal{O}^{+}{(x_{u})}\subset\mathcal{O}^{+}{(x)}$ and as $D_{0}%
\subset\overline{\mathcal{O}^{+}{(x_{u})}},$ it follows that $D_{0}%
\subset\overline{\mathcal{O}^{+}{(x)}}$. Next we show that there is a control
$v\in\mathcal{U}$ with $\varphi(t,x,v)\in D_{0}$ for all $t\geq0$. Since
$x\in\mathcal{O}^{-}(x_{u})$ there are $T>0$ and $v_{1}\in\mathcal{U}$ such
that $\varphi(T,x,v_{1})=x_{u}$ and $\varphi(t,x,v_{1})\in\mathcal{O}%
^{-}(x_{u})$ for all $t\in\lbrack0,T]$. Furthermore, $\varphi(t,x,v_{1}%
)\in\mathcal{O}^{+}(x)$ and $x\in\overline{\mathcal{O}^{+}(x_{u})},$ and hence
continuous dependence on the initial value shows that $\varphi(t,x,v_{1}%
)\in\overline{\mathcal{O}^{+}(x_{u})}$ for all $t\in\lbrack0,T].$ Now the
control function%
\[
v(t):=\left\{
\begin{array}
[c]{lll}%
v_{1}(t) & \text{for} & t\in\lbrack0,T]\\
u & \text{for} & t>T
\end{array}
\right.
\]
yields $\varphi(t,x,v)\in D_{0}$ for all $t\geq0.$ We have shown that $D_{0}$
satisfies properties (i) and (ii) in Definition \ref{def:Dset}. Hence it is
contained in a maximal set $D$ with these properties, i.e., a control set,
obtained as the union of all sets satisfying properties (i) and (ii) and
containing $D_{0}$.

Let us show that $D_{0}=D$.\ By the definition of control sets and $x_{u}\in
D$, the inclusion $D\subset\overline{\mathcal{O}^{+}{(x}_{u}{)}}$ holds and
for $x\in D$ one has $x_{u}\in\overline{\mathcal{O}^{+}(x)}$. Using that
$\mathcal{O}^{-}(x_{u})$ is a neighborhood of $x_{u}$ this implies that there
are $T>0$ and a control $u\in\mathcal{U}$ with $\varphi(T,x,u)\in
\mathcal{O}^{-}(x_{u})$, and hence $x\in\mathcal{O}^{-}(x_{u})$. This shows
that $D\subset\mathcal{O}^{-}(x_{u})\cap\overline{\mathcal{O}^{+}{(x}_{u}{)}%
}=D_{0}$ and hence equality holds concluding the proof that $D_{0}$ is a
control set.
\end{proof}

Next we show that every connected subset of the set $E_{0}$ of equilibria is
contained in a single control set, if the systems linearized about the
equilibria are controllable.

\begin{theorem}
\label{Theorem3}Let $\mathcal{C}\subset\{x_{u}\left\vert u\in\mathrm{int}%
\left(  \Omega\right)  \right.  \}=E_{0}$ be a pathwise connected subset of
the set of equilibria of system (\ref{affine}) and assume that for every
equilibrium $x_{u}$ in $\mathcal{C}$ the control $u$ satisfies the rank
condition (\ref{Kalman2}). Then there exists a control set $D$ containing
$\mathcal{C}$ in the interior and $D=\mathcal{O}^{-}{(x_{u})}\cap
\overline{\mathcal{O}^{+}{(x_{u})}}$ for every $x_{u}\in\mathcal{C}$.
\end{theorem}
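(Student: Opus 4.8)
The plan is to combine Proposition \ref{Proposition3}, applied pointwise along $\mathcal{C}$, with the fact noted after Definition \ref{def:Dset} that two control sets with nonvoid intersection must coincide, and then to run a connectedness argument over a path joining two equilibria. First, for every equilibrium $x_u\in\mathcal{C}$ the associated control value $u\in\mathrm{int}(\Omega)$ satisfies the rank condition (\ref{Kalman2}), so Proposition \ref{Proposition3} produces a control set $D_{x_u}:=\mathcal{O}^{-}(x_u)\cap\overline{\mathcal{O}^{+}(x_u)}$ containing $x_u$ in its interior. The whole assertion will follow once I show that all of these control sets are one and the same set $D$: the identity $D=\mathcal{O}^{-}(x_u)\cap\overline{\mathcal{O}^{+}(x_u)}$ for each $x_u\in\mathcal{C}$ then holds by construction, and $\mathcal{C}\subset\mathrm{int}(D)$ because each $x_u$ lies in the interior of $D_{x_u}=D$.

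Next I would fix two equilibria $x_{u_0},x_{u_1}\in\mathcal{C}$ and, using pathwise connectedness, a continuous path $\gamma\colon[0,1]\to\mathcal{C}$ with $\gamma(0)=x_{u_0}$ and $\gamma(1)=x_{u_1}$. Writing $D_t:=D_{\gamma(t)}$ for the control set attached to $\gamma(t)$, I would consider $S:=\{t\in[0,1]\mid D_t=D_0\}$ and show it is nonempty, open, and closed, so that $S=[0,1]$ by connectedness of the interval, whence $D_1=D_0$. Nonemptiness is clear since $0\in S$.

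For openness, suppose $t_{\ast}\in S$, so $D_{t_{\ast}}=D_0=:D$. Since $\gamma(t_{\ast})\in\mathrm{int}(D)$ and $\mathrm{int}(D)$ is open, continuity of $\gamma$ gives $\delta>0$ with $\gamma(t)\in\mathrm{int}(D)$ whenever $|t-t_{\ast}|<\delta$. For such $t$ the point $\gamma(t)$ lies in $D\cap D_t$, since it belongs to its own control set $D_t$ by Proposition \ref{Proposition3}; this intersection is nonvoid, and therefore $D_t=D=D_0$, i.e.\ $t\in S$. Closedness is analogous but uses the control set of the limit point: if $t_n\to t_{\ast}$ with $t_n\in S$, then $\gamma(t_{\ast})\in\mathrm{int}(D_{t_{\ast}})$ forces $\gamma(t_n)\in\mathrm{int}(D_{t_{\ast}})$ for large $n$; since $\gamma(t_n)\in D_{t_n}=D_0$, the intersection $D_0\cap D_{t_{\ast}}$ is nonvoid, whence $D_{t_{\ast}}=D_0$ and $t_{\ast}\in S$.

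The crux, and the only place any care is needed, is the interplay in these two steps between the openness of the interiors (furnished by Proposition \ref{Proposition3}) and the coincidence of intersecting control sets; once that is in place the argument is purely topological. Running this for an arbitrary second equilibrium shows every $D_{x_u}$ equals the fixed control set $D:=D_{x_{u_0}}$, which completes the proof. I do not anticipate a genuine obstacle beyond keeping the openness-versus-closedness bookkeeping straight, since in the closed case the nearby path points must be tested against the interior of the \emph{limit} control set rather than a fixed one.
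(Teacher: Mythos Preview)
Your proof is correct. Both your argument and the paper's run a connectedness argument along a continuous path in $\mathcal{C}$, but they track different objects. The paper fixes $x_u\in\mathcal{C}$ and shows directly that the open reachable set $\mathcal{O}^+(x_u)$ swallows the entire path: it sets $\tau=\sup\{s\mid h([0,s])\subset\mathcal{O}^+(x_u)\}$ and argues that $\tau<1$ is impossible because the putative boundary point $y=h(\tau)$ would have an open controllable set $\mathcal{O}^-(y)$ (from Proposition~\ref{Proposition_open}) meeting $\mathcal{O}^+(x_u)$, forcing $y\in\mathcal{O}^+(x_u)$ after all; having established mutual exact reachability within $\mathcal{C}$, the paper then concludes that $\mathcal{C}$ lies in a single control set. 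You instead track the control sets $D_t$ themselves and use the maximality property (two control sets with nonvoid intersection coincide) as a black box, which makes the open/closed verification particularly clean. The paper's route is a bit more primitive in that it works at the level of reachable and controllable sets and hence transports to settings where the control-set machinery is thinner; your route is more streamlined once Proposition~\ref{Proposition3} and the coincidence property are already available.
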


\begin{proof}
By Proposition \ref{Proposition3} every equilibrium $x_{u}\in\mathcal{C}$ is
contained in the interior of a control set. Consider two points $x_{u}$ and
$x_{v}$ in $\mathcal{C}$. Then $x_{v}\in\mathcal{O}^{+}(x_{u})$. In fact,
consider a continuous path from $x_{u}$ to $x_{v}$ in $\mathcal{C}$, say
$h:[0,1]\rightarrow\mathcal{C}$ with $h(0)=x_{u}$ and $h(1)=x_{v}$. Let
\[
\tau:=\sup\{s\in\lbrack0,1]\left\vert \forall s^{\prime}\in\lbrack
0,s]:h(s^{\prime})\in\mathcal{O}^{+}(x_{u})\right.  \}.
\]
Observe that $\tau>0$, since by Proposition \ref{Proposition_open}, the
reachable set $\mathcal{O}^{+}(x_{u})$ is open. If $\tau<1$, then
$y:=h(\tau)\in\overline{\mathcal{O}^{+}(x_{u})}\setminus\mathcal{O}^{+}%
(x_{u})\subset\partial\mathcal{O}^{+}(x_{u})$. Thus $\mathcal{O}^{-}%
(y)\cap\mathcal{O}^{+}(x_{u})=\varnothing$. On the other hand, $y$ is an
equilibrium corresponding to a control in the interior of $\Omega$. Again
Proposition \ref{Proposition_open} implies that $\mathcal{O}^{-}(y)$ is a
neighborhood of $y$, and hence $\mathcal{O}^{-}(y)\cap\mathcal{O}^{+}%
(x_{u})\not =\varnothing$. This contradiction shows that $\tau=1$ and
$y=x_{v}$. Thus one can steer the system from any point $x_{u}\in\mathcal{C}$
to any other point $x_{v}\in\mathcal{C}$. It follows that $\mathcal{C}$ is
contained in a single control set $D$. The same arguments show that, in fact,
$\mathcal{C}$ is contained in the interior of $D$.
\end{proof}

\begin{remark}
For scalar control, Theorem \ref{Theorem2} shows that there are at most $n+1$
connected components of the set $E$ of equilibria, which consists of at most
$n+1$ smooth curves. Thus also $E_{0}$ consists of at most $n+1$ smooth curves
which, naturally, are pathwise connected. Hence, under the assumptions of
Theorem \ref{Theorem3}, there are at most $n+1$ control sets containing an
equilibrium in the interior.
\end{remark}

In the rest of this section, we relate the controllability properties of
system (\ref{affine}) to spectral properties of the matrices $A(u),u\in\Omega$.

\begin{lem}
\label{l1}Consider the affine system (\ref{affine}) and suppose that $x_{u}$
is an equilibrium for a control value $u\in\mathrm{int}(\Omega)$ satisfying
the rank condition (\ref{Kalman2}).

(i) If every eigenvalue of $A(u)$ has negative real part, it follows that
$\mathcal{O}^{-}(x_{u})=\mathbb{R}^{n}$.

(ii) If every eigenvalue of $A(u)$ has positive real part, it follows that
$\mathcal{O}^{+}(x_{u})=\mathbb{R}^{n}$.
\end{lem}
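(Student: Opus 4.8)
The plan is to prove part (i) directly, exploiting that the constant control $u$ drives every trajectory to the equilibrium $x_u$, and then to deduce part (ii) from (i) by time reversal. The two essential ingredients are the explicit form of the constant-control flow furnished by Proposition \ref{Proposition_shift}, and the openness of the controllable and reachable sets of $x_u$ guaranteed by the rank condition (\ref{Kalman2}) through Proposition \ref{Proposition_open}.

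For part (i), I would first record that under the constant control $u$ Proposition \ref{Proposition_shift} gives $\varphi(t,y,u)=x_u+e^{A(u)t}(y-x_u)$ for every $y\in\mathbb{R}^n$. Since all eigenvalues of $A(u)$ have negative real part, $e^{A(u)t}\to 0$, so $\varphi(t,y,u)\to x_u$ as $t\to\infty$; thus the basin of attraction of $x_u$ for this linear flow is all of $\mathbb{R}^n$. By Proposition \ref{Proposition_open} the controllable set $\mathcal{O}^{-}(x_u)$ is open and contains $x_u$, hence contains an open neighborhood $N$ of $x_u$. Given an arbitrary $y$, the trajectory enters $N$ at some finite time $T$, i.e. $\varphi(T,y,u)\in N\subset\mathcal{O}^{-}(x_u)$. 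Concatenating the constant control $u$ on $[0,T]$ with a control steering $\varphi(T,y,u)$ to $x_u$ shows $y\in\mathcal{O}^{-}(x_u)$. As $y$ was arbitrary, $\mathcal{O}^{-}(x_u)=\mathbb{R}^n$.

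For part (ii), I would pass to the time-reversed system (\ref{reversed}), which for the affine system reads $\dot{x}=-A(v)x-Cv-d$ with $v\in\mathcal{U}$. The equilibrium condition (\ref{equilibrium1}) shows that $x_u$ is still an equilibrium at the control value $u$; its linearization matrix is now $-A(u)$, all of whose eigenvalues have negative real part; and the associated controllability matrix differs from the original one only by sign changes of entire column blocks, so the rank condition (\ref{Kalman2}) persists. Hence part (i) applies to the reversed system and yields $\mathcal{O}_2^{-}(x_u)=\mathbb{R}^n$ in the notation of Lemma \ref{Lemma_time_reversal}; that lemma then gives $\mathcal{O}^{+}(x_u)=\mathcal{O}_1^{+}(x_u)=\mathcal{O}_2^{-}(x_u)=\mathbb{R}^n$, proving (ii). Alternatively one argues directly: for large $T$ the point $x_u+e^{-A(u)T}(y-x_u)$ lies in the open reachable neighborhood of $x_u$ and is carried to $y$ by the constant control $u$.

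The arguments are short, and the only substantive input is the openness of $\mathcal{O}^{\pm}(x_u)$; the dynamical content is elementary linear algebra. The point I would be most careful about is that asymptotic convergence by itself does not place $y$ in the controllable set — one genuinely needs the open neighborhood $N$ from the rank condition, so that reaching $N$ in \emph{finite} time suffices — and, in part (ii), that the rank condition (\ref{Kalman2}) indeed transfers to the time-reversed system, which is where the sign-block observation on the controllability matrix is used.
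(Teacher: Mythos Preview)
Your proposal is correct and follows essentially the same approach as the paper: for (i) both arguments use the identity $\varphi(t,y,u)-x_u=e^{A(u)t}(y-x_u)$ (you via Proposition~\ref{Proposition_shift}, the paper via variation of constants) together with the openness of $\mathcal{O}^{-}(x_u)$ from Proposition~\ref{Proposition_open}, and for (ii) both reduce to (i) by time reversal via Lemma~\ref{Lemma_time_reversal}. Your verification that the rank condition transfers to the reversed system is in fact more explicit than the paper's, which simply asserts the reduction.
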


\begin{proof}
By Proposition \ref{Proposition_open} the rank condition (\ref{Kalman2})
implies that $\mathcal{O}^{-}(x_{u})$ and $\mathcal{O}^{+}(x_{u})$ are open.

(i) Let $0<\alpha<-\max\{\operatorname{Re}\lambda\left\vert \lambda\text{ an
eigenvalue of }A(u)\right.  \}$. Then there is a constant $c_{0}\geq1$ such
that every solution of the autonomous linear differential equation $\dot
{x}(t)=A(u)x(t),\,x(0)=x_{0}$, satisfies%
\begin{equation}
\left\Vert e^{A(u)t}x_{0}\right\Vert \leq c_{0}e^{-\alpha t}\left\Vert
x_{0}\right\Vert \text{ for all }t\geq0\text{.} \label{stab1}%
\end{equation}
The variation-of-constants formula applied for $x\in\mathbb{R}^{n}$ and
$x_{u}$ shows that%
\begin{align*}
&  \varphi(t,x,u)-x_{u}\\
&  =e^{A(u)t}x+\int_{0}^{t}e^{A(u)(t-s)}[Cu+d]ds-e^{A(u)t}x_{u}-\int_{0}%
^{t}e^{A(u)(t-s)}[Cu+d]ds\\
&  =e^{A(u)t}\left(  x-x_{u}\right)  .
\end{align*}
Thus (\ref{stab1}) implies%
\[
\left\Vert \varphi(t,x,u)-x_{u}\right\Vert \leq c_{0}e^{-\alpha t}\left\Vert
x-x_{u}\right\Vert \rightarrow0\text{ for }t\rightarrow\infty.
\]
Since $\mathcal{O}^{-}(x_{u})$ is a neighborhood of $x_{u}$, there exists
$T>0$ such that $\varphi(T,x,u)\in\mathcal{O}^{-}(x_{u})$. Thus $x\in
\mathcal{O}^{-}\left(  \varphi(T,x,u)\right)  \subset\mathcal{O}^{-}(x_{u})$
and $\mathbb{R}^{n}=\mathcal{O}^{-}(x_{u})$ follows.

(ii) For the system $\dot{x}(t)=-A(u)x-Cu-d$, every eigenvalue of $-A(u)$ has
negative real part. By (i) and time reversal, Lemma \ref{Lemma_time_reversal},
the assertion follows.
\end{proof}

\begin{remark}
An easy consequence of this lemma is that the system is controllable if there
are $u,v\in\Omega$ with equilibria $x_{u},x_{v}$ in the same pathwise
connected subset of $E_{0}$ such that every eigenvalue of $A(u)$ has negative
real part and every eigenvalue of $A(v)$ has positive real part; cf. Mohler
\cite[Main Result, p. 28]{Mohler} for the special case of inhomogeneous
bilinear systems of the form (\ref{bilinear}).
\end{remark}

The following corollary to Theorem \ref{Theorem3} shows that there is a
control set around the set of equilibria for uniformly hyperbolic matrices
$A(u),u\in\Omega$.

\begin{corollary}
\label{Corollary_hyperbolic}Consider an affine control system of the form
(\ref{affine}) and assume that

(i) the control range $\Omega=\overline{\mathrm{int}(\Omega)}$ is compact and
$\mathrm{int}\left(  \Omega\right)  $ is pathwise connected;

(ii) the matrices $A(u)$ are uniformly hyperbolic in the following sense:
There is $k$ with $0\leq k\leq n$ such that for all $u\in\Omega$ there are $k$
eigenvalues with $\operatorname{Re}\lambda_{1}(u),\ldots,\allowbreak
\operatorname{Re}\lambda_{k}(u)<0$ and $n-k$ eigenvalues with
$\operatorname{Re}\lambda_{k+1}(u),\ldots,\operatorname{Re}\lambda_{n}(u)>0$;

(iii) every $u\in\mathrm{int}\left(  \Omega\right)  $ satisfies the rank
condition (\ref{Kalman2}).

Then the set $E=\overline{E_{0}}$ of equilibria is compact and connected, the
set $E_{0}$ is pathwise connected, and there exists a control set $D$ with
$E_{0}\subset\mathrm{int}(D)$.
\end{corollary}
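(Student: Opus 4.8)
The plan is to reduce the statement to the continuity of the equilibrium map $u\mapsto x_u$ together with a single application of Theorem~\ref{Theorem3}. First I would observe that assumption (ii) rules out the eigenvalue $0$, so $\det A(u)\neq 0$ for every $u\in\Omega$. Hence, by (\ref{equilibrium2}), each $u\in\Omega$ determines a \emph{unique} equilibrium $x_u=-A(u)^{-1}[Cu+d]$; and since $u\mapsto A(u)$ is affine, matrix inversion is continuous on the open set of invertible matrices, and $u\mapsto Cu+d$ is affine, the map $u\mapsto x_u$ is continuous (indeed rational and real-analytic) on $\Omega$. Consequently $E=\{x_u\mid u\in\Omega\}$ and $E_0=\{x_u\mid u\in\mathrm{int}(\Omega)\}$ are exactly the images of $\Omega$ and $\mathrm{int}(\Omega)$ under this map.

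Next I would extract the three topological conclusions. Since $\Omega$ is compact, its continuous image $E$ is compact. Since $\mathrm{int}(\Omega)$ is pathwise connected it is connected, so its closure $\Omega=\overline{\mathrm{int}(\Omega)}$ is connected, whence $E$, as a continuous image of a connected set, is connected. Likewise $E_0$, being the continuous image of the pathwise connected set $\mathrm{int}(\Omega)$, is pathwise connected. For the identity $E=\overline{E_0}$ I would use that a continuous map $f$ satisfies $f(\overline{S})\subseteq\overline{f(S)}$; applied to $S=\mathrm{int}(\Omega)$ with $\Omega=\overline{\mathrm{int}(\Omega)}$ this gives $E\subseteq\overline{E_0}$, while $E_0\subseteq E$ together with $E$ closed gives $\overline{E_0}\subseteq E$.

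Finally I would apply Theorem~\ref{Theorem3} with $\mathcal{C}=E_0$. The set $E_0$ is a pathwise connected subset of the equilibria, and by assumption (iii) every $u\in\mathrm{int}(\Omega)$ satisfies the rank condition (\ref{Kalman2}); hence every equilibrium $x_u\in E_0$ meets the hypothesis of Theorem~\ref{Theorem3}. That theorem then produces a single control set $D$ with $E_0\subset\mathrm{int}(D)$, which is the desired conclusion.

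There is no genuine obstacle here: the argument is routine continuity and connectedness bookkeeping followed by a direct citation. The two points requiring a little care are that it is precisely hyperbolicity that guarantees invertibility of $A(u)$ throughout $\Omega$ (so that equilibria are unique and vary continuously, allowing us to identify $E$ and $E_0$ with images of $u\mapsto x_u$), and the identity $E=\overline{E_0}$, which relies on the standing assumption $\Omega=\overline{\mathrm{int}(\Omega)}$ in (i).
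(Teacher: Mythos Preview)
Your proof is correct and follows essentially the same route as the paper: hyperbolicity forces invertibility of $A(u)$, hence the equilibrium map $u\mapsto x_u$ is continuous, which yields the compactness, connectedness, and pathwise-connectedness assertions, and then Theorem~\ref{Theorem3} applied to $\mathcal{C}=E_0$ gives the control set. Your version is in fact slightly more careful than the paper's in justifying the identity $E=\overline{E_0}$.
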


\begin{proof}
First observe that all matrices $A(u),\,u\in\Omega$, are invertible, since $0$
is not an eigenvalue. Thus the set $E=\{x_{u}\left\vert u\in\Omega\right.  \}$
of equilibria is compact and $E_{0}$ is pathwise connected, since $x_{u}$
depends continuously on $u$. By Theorem \ref{Theorem3} there exists a control
set containing $E_{0}$ in the interior. Since pathwise connected sets are
connected the set $\mathrm{int}\left(  \Omega\right)  $ is connected, which
implies that also $\Omega=\overline{\mathrm{int}(\Omega)}$ is connected, cf.
Engelking \cite[Corollary 6.1.11]{Engel}. It also follows that the set
$E=\overline{E_{0}}$ is connected.
\end{proof}

If condition (ii) of Corollary \ref{Corollary_hyperbolic} holds with $k=0$ or
$k=n$, i.e., if all matrices $A(u)$ are stable or all are totally unstable,
the rank condition (iii) for the linearized systems can be weakened.

\begin{corollary}
\label{Corollary_stable}Let assumption (i) of Corollary
\ref{Corollary_hyperbolic} be satisfied and assume that there are at most
finitely many\ points in $\mathrm{int}\left(  \Omega\right)  $ such that the
rank condition (\ref{Kalman2}) is violated.

(i) If for all $u\in\mathrm{int}\left(  \Omega\right)  $ all eigenvalues of
$A(u)$ have negative real parts, there exists a closed control set $D$ with
$E_{0}\subset\mathrm{int}(D)$.

(ii) If for all $u\in\mathrm{int}\left(  \Omega\right)  $ all eigenvalues of
$A(u)$ have positive real parts, there exists a control set $D$ with
$E_{0}\subset\mathrm{int}(D)$.
\end{corollary}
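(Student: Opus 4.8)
The plan is to reduce both parts to Proposition \ref{Proposition3} by exploiting that the uniform sign of the spectrum lets Lemma \ref{l1} produce one large control set, and then to upgrade ``membership in $D$'' to ``membership in $\mathrm{int}(D)$'' for the finitely many degenerate equilibria. I treat part (i) in detail; part (ii) will follow by time reversal.

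First I would record the elementary structure. Since every eigenvalue of $A(u)$ has negative real part for $u\in\mathrm{int}(\Omega)$, each $A(u)$ is invertible, so the equilibrium $x_u=-A(u)^{-1}[Cu+d]$ is unique and depends continuously on $u$; hence $E_0=\{x_u\mid u\in\mathrm{int}(\Omega)\}$ is pathwise connected. Let $F\subset\mathrm{int}(\Omega)$ be the finite set where the rank condition (\ref{Kalman2}) fails and pick $u_0\in\mathrm{int}(\Omega)\setminus F$. By Lemma \ref{l1}(i) one has $\mathcal{O}^{-}(x_{u_0})=\mathbb{R}^n$, and then Proposition \ref{Proposition3} yields a control set
\[
D=\mathcal{O}^{-}(x_{u_0})\cap\overline{\mathcal{O}^{+}(x_{u_0})}=\overline{\mathcal{O}^{+}(x_{u_0})}
\]
with $x_{u_0}\in\mathrm{int}(D)$. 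In particular $D$ is closed, which is the closedness asserted in (i).

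Next I would show that every equilibrium lies in $D$. For a second good control $v$ the same reasoning gives $\mathcal{O}^{-}(x_v)=\mathbb{R}^n$, so $x_{u_0}\in\mathcal{O}^{-}(x_v)$ and $x_v\in\mathcal{O}^{-}(x_{u_0})$; thus $x_{u_0}$ and $x_v$ are mutually reachable and, being interior points of their control sets (Proposition \ref{Proposition3}), lie in the \emph{same} control set $D$. For a degenerate control $u^{\ast}\in F$, stability under the constant control $u^{\ast}$ gives $\varphi(t,x_{u_0},u^{\ast})\to x_{u^{\ast}}$, so $x_{u^{\ast}}\in\overline{\mathcal{O}^{+}(x_{u_0})}=D$; combined with $x_{u^{\ast}}\in\mathcal{O}^{-}(x_{u_0})=\mathbb{R}^n$ this places $x_{u^{\ast}}$ in $D$. (Alternatively one may run Theorem \ref{Theorem3} on the pathwise connected set of good equilibria, merging the at most finitely many arcs by the mutual reachability just established.)

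The decisive and hardest step is to promote $x_{u^{\ast}}$ from $D$ to $\mathrm{int}(D)$ for the finitely many $u^{\ast}\in F$. Since $\mathcal{O}^{+}(x_{u_0})$ is open and $\mathcal{O}^{-}(x_{u_0})=\mathbb{R}^n$, one has $\mathrm{int}(D)\supseteq\mathcal{O}^{+}(x_{u_0})$, so it suffices to produce an entire neighborhood of $x_{u^{\ast}}$ inside $\overline{\mathcal{O}^{+}(x_{u_0})}$. The rank condition is unavailable at $u^{\ast}$, so $\mathcal{O}^{+}(x_{u^{\ast}})$ need not be open, and the bare contraction $\varphi(t,\cdot,u^{\ast})\to x_{u^{\ast}}$ is not enough: a contraction may carry an open set into itself while fixing a boundary point. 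My plan is to combine the uniform exponential contraction furnished by (\ref{stab1}) at $u^{\ast}$ with the open reachable neighborhoods $\mathcal{O}^{+}(x_v)\subseteq\mathcal{O}^{+}(x_{u_0})$ attached to the good controls $v$ filling a punctured neighborhood of $u^{\ast}$: as $v\to u^{\ast}$ one has $x_v\to x_{u^{\ast}}$, and the issue is to show that balls $B(x_v,r_v)\subseteq\mathcal{O}^{+}(x_v)$ cover a fixed neighborhood of $x_{u^{\ast}}$, i.e. that the local reachability radius $r_v$ does not decay faster than the displacement $\lVert x_v-x_{u^{\ast}}\rVert$. This quantitative comparison—equivalently, establishing local accessibility, the rank condition (\ref{ARC}), of the analytic affine system at the single point $x_{u^{\ast}}$, after which $x_{u^{\ast}}\in D$ being reachable from and to $\mathrm{int}(D)$ forces $x_{u^{\ast}}\in\mathrm{int}(D)$—is the main obstacle and the step on which I would spend the most care.

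Finally, part (ii) follows by applying part (i) to the time reversed system (\ref{reversed}): its matrices are $-A(u)$, whose eigenvalues then have negative real part, its equilibria are again the $x_u$, and the associated controllability matrices have unchanged rank, so (\ref{Kalman2}) fails on the same finite set. Part (i) produces a closed control set for the reversed system with $E_0$ in its interior; translating reachable and controllable sets back through Lemma \ref{Lemma_time_reversal} (now $\mathcal{O}^{+}(x_{u_0})=\mathbb{R}^n$ by Lemma \ref{l1}(ii), so the original control set $D=\mathcal{O}^{-}(x_{u_0})$ is open) yields the asserted control set with $E_0\subset\mathrm{int}(D)$.
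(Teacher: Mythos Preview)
Your approach to part (i) is essentially the paper's: pick a good $u_0$, use Lemma \ref{l1}(i) to get $\mathcal{O}^{-}(x_{u_0})=\mathbb{R}^n$, obtain the closed control set $D=\overline{\mathcal{O}^{+}(x_{u_0})}$ from Proposition \ref{Proposition3}, and then argue that every equilibrium lies in $D$. The paper packages the last step as a path argument in $E_0$ (defining $\tau$ as the last parameter for which the path stays in $D_u$ and deriving a contradiction from $\tau<1$, splitting into the cases where $h(\tau)$ is a good or a bad equilibrium), while you use direct mutual reachability between good equilibria via $\mathcal{O}^{-}(x_v)=\mathbb{R}^n$; the content is the same.

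You are right that the delicate point is placing the finitely many degenerate equilibria in the \emph{interior} of $D$. The paper's proof does not supply an explicit argument for this either: its $\tau$-argument yields $E_0\subset D_u$ and then simply asserts the interior conclusion. So your identification of this as the main obstacle is accurate, and you are not missing an idea that the paper provides. Your suggested routes (quantitative control of the reachability radius $r_v$ as $v\to u^\ast$, or establishing the accessibility rank condition (\ref{ARC}) at $x_{u^\ast}$) are reasonable, but neither is carried out, and neither is carried out in the paper.

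For part (ii), your time-reversal shortcut needs more care than you give it. Part (i) applied to the reversed system produces the closed set $D^{-}=\overline{\mathcal{O}_2^{+}(x_{u_0})}=\overline{\mathcal{O}_1^{-}(x_{u_0})}=\overline{D}$, whereas the control set for the original system is the open set $D=\mathcal{O}_1^{-}(x_{u_0})$; from $E_0\subset\mathrm{int}(\overline{D})$ one cannot conclude $E_0\subset D$ without knowing that $D$ is regular open. The paper avoids this translation issue by repeating the path argument of (i) directly with $\mathcal{O}^{+}(x_u)=\mathbb{R}^n$ in place of $\mathcal{O}^{-}(x_u)=\mathbb{R}^n$, which is the cleaner route here (though it inherits the same interior question for bad equilibria as part (i)).
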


\begin{proof}
As in Corollary \ref{Corollary_hyperbolic}(i) it follows that the set $E_{0}$
of equilibria is pathwise connected. Consider equilibria $x_{u},x_{v}\in
E_{0}$ with $u,v\in\mathrm{int}\left(  \Omega\right)  $ and suppose that
$x_{u}$ satisfies condition (\ref{Kalman2}). Hence there is a control set
$D_{u}$ containing $x_{u}$ in the interior. We use a construction similar to
the one in the proof of Theorem \ref{Theorem3}: There is a continuous map
$h:[0,1]\rightarrow E_{0}$ with $h(0)=x_{u}$ and $h(1)=x_{v}$. Let%
\[
\tau:=\sup\{s\in\lbrack0,1]\left\vert \forall s^{\prime}\in\lbrack
0,s]:h(s^{\prime})\in D_{u}\right.  \}.
\]
Observe that $\tau>0$, since $x_{u}\in\mathrm{int}\left(  D_{u}\right)  $. If
$\tau<1$, then $y:=h(\tau)\in\partial D_{u}$ and $y=x_{w}$ is an equilibrium
for some $w\in\mathrm{int}\left(  \Omega\right)  $. If $w$ satisfies
(\ref{Kalman2}), then\ by Proposition \ref{Proposition3} $x_{w}$ is in the
interior of a control set contradicting the choice of $\tau$. It remains to
discuss the case where $w$ violates (\ref{Kalman2}).

(i) Since all eigenvalues of $A(u)$ have negative real parts, Lemma
\ref{l1}(i) implies that $x_{w}\in\mathcal{O}^{-}(x_{u})=\mathbb{R}^{n}$.
Hence one can steer $x_{w}$ (in finite time) into the interior of $D_{u}$, and
by continuous dependence on the initial value, this holds for all $x$ in a
neighborhood $N(x_{w})$. Note that $x_{w}\in\overline{D_{u}}\cap\partial
D_{u}$. Since there are only finitely many points violating (\ref{Kalman2}),
all points $h(s^{\prime\prime})$ with $s^{\prime\prime}\in(\tau,\tau
+\varepsilon)$ for some $\varepsilon>0$ satisfy (\ref{Kalman2}) and hence they
are in a single control set $D^{\prime}$ and hence $x_{w}\in\overline
{D^{\prime}}$. Then all points in the nonvoid intersection $N(x_{w})\cap
D^{\prime}$ can be steered into $D_{u}$. The same arguments show that one can
steer points in $D_{u}$ into $D^{\prime}$, hence $D^{\prime}=D_{u}$. This
contradicts the choice of $\tau$. It follows that $\tau=1$ and $x_{v}%
\in\overline{D_{u}}$. Using $x_{v}\in\mathcal{O}^{-}(x_{u})=\mathbb{R}^{n}$
and $D_{u}=\overline{\mathcal{O}^{+}(x_{u})}\cap\mathcal{O}^{-}(x_{u}%
)=\overline{\mathcal{O}^{+}(x_{u})}$ one sees that $x_{v}\in D_{u}$. We
conclude that all equilibria in $E_{0}$ are contained in the interior of a
single closed control set.

(ii) Since all eigenvalues of $A(u)$ have positive real parts, Lemma
\ref{l1}(ii) implies that $x_{w}\in\mathcal{O}^{+}(x_{u})=\mathbb{R}^{n}$.
This shows that $x_{w}$ can be reached from $x_{u}\in\mathrm{int}\left(
D_{u}\right)  $. Continuous dependence on the initial value shows that all
points in a neighborhood $N(x_{w})$ of $x_{w}$ can be reached from the
interior of $D_{u}$. Since there are only finitely many points violating
(\ref{Kalman2}), all points $h(s^{\prime\prime})$ with $s^{\prime\prime}%
\in(\tau,\tau+\varepsilon)$ for some $\varepsilon>0$ are in a single control
set $D^{\prime}$ and $x_{w}\in\overline{D^{\prime}}$. Then all points in the
nonvoid intersection $N(x_{w})\cap D^{\prime}$ can be reached from the
interior of $D_{u}$. The same arguments show that some point in $\mathrm{int}%
\left(  D_{u}\right)  $ can be reached from $D^{\prime}$, hence $D^{\prime
}=D_{u}$. This contradicts the choice of $\tau$. It follows that $\tau=1$ and
$x_{v}\in\overline{D_{u}}$. Using $x_{v}\in\mathcal{O}^{+}(x_{u}%
)=\mathbb{R}^{n}$ and $D_{u}=\overline{\mathcal{O}^{+}(x_{u})}\cap
\mathcal{O}^{-}(x_{u})=\mathcal{O}^{-}(x_{u})$ one sees that $x_{v}\in D_{u}$.
We conclude that all equilibria in $E_{0}$ are contained in the interior of a
single control set.
\end{proof}

\begin{remark}
Remark \ref{Remark5.4} shows for an affine system of the form (\ref{affine})
with scalar control satisfying the assumptions of Proposition
\ref{Proposition_generic} that there are at most finitely many\ points $u$
where the rank condition (\ref{Kalman2}) is violated.
\end{remark}

Next we provide a sufficient condition for the existence of unbounded control sets.

\begin{teo}
\label{Theorem_unbounded}Consider an affine control system of the form
(\ref{affine}), let $\mathcal{C}$ be a pathwise connected subset of the set
$E_{0}$ of equilibria of system (\ref{affine}) and define $\Omega
(\mathcal{C})=\{u\in\mathrm{int}(\Omega)\left\vert x_{u}\in\mathcal{C}\right.
\}$. Assume that

(i) there is $u^{0}\in\overline{\Omega(\mathcal{C})}$ such that $A(u^{0})$ has
the eigenvalue $\lambda_{0}=0$ and $Cu^{0}+d$ is not in the range of
$A(u^{0})$;

(ii) every $u\in\Omega(\mathcal{C}),u\not =u^{0}$, satisfies $\mathrm{rank}%
A(u)=n$ and the rank condition (\ref{Kalman2}).

Then, there is an unbounded control set $D\subset\mathbb{R}^{n}$ containing
$\mathcal{C}$ in the interior. More precisely, for $u^{k}\in\Omega
(\mathcal{C})$ with $u^{k}\rightarrow u^{0}$ for $k\rightarrow\infty$, the
equilibria $x_{u^{k}}\in\mathcal{C}\subset\mathrm{int}(D)$ satisfy for
$k\rightarrow\infty$
\begin{equation}
\left\Vert x_{u^{k}}\right\Vert \rightarrow\infty\text{ and }\frac{x_{u^{k}}%
}{\left\Vert x_{u^{k}}\right\Vert }\rightarrow\ker A(u^{0})\cap\mathbb{S}%
^{n-1}. \label{5.8}%
\end{equation}

\end{teo}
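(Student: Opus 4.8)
The plan is to obtain the control set $D$ from Theorem~\ref{Theorem3} and then read off both the limit relation (\ref{5.8}) and the unboundedness of $D$ from the asymptotics of the equilibria $x_{u^k}$.

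First I would observe that $u^0\notin\Omega(\mathcal{C})$: since $A(u^0)$ has the eigenvalue $0$ it is singular, and by assumption (i) the vector $Cu^0+d$ is not in its range, so (\ref{equilibrium1}) has no solution for $u^0$ and no equilibrium $x_{u^0}$ exists. Hence every $u\in\Omega(\mathcal{C})$ satisfies $u\neq u^0$, so by assumption (ii) it satisfies the rank condition (\ref{Kalman2}) and has $\det A(u)\neq0$ (so (\ref{equilibrium2}) applies). As $\mathcal{C}$ is a pathwise connected subset of $E_0$, Theorem~\ref{Theorem3} then provides a control set $D$ with $\mathcal{C}\subset\mathrm{int}(D)$ and $D=\mathcal{O}^-(x_u)\cap\overline{\mathcal{O}^+(x_u)}$ for every $x_u\in\mathcal{C}$.

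Next I would fix a sequence $u^k\in\Omega(\mathcal{C})$ with $u^k\to u^0$, available since $u^0\in\overline{\Omega(\mathcal{C})}$, and use the equilibrium identity $A(u^k)x_{u^k}=-(Cu^k+d)$. To show $\|x_{u^k}\|\to\infty$ I would argue as in the proof of Theorem~\ref{Theorem2}: were some subsequence of $(x_{u^k})$ bounded, a further subsequence would converge to some $y$, and passing to the limit would give $A(u^0)y=-(Cu^0+d)$, putting $Cu^0+d$ into the range of $A(u^0)$ and contradicting (i). For the direction, I would set $s_k:=x_{u^k}/\|x_{u^k}\|\in\mathbb{S}^{n-1}$ and divide the identity by $\|x_{u^k}\|$ to get
\[
A(u^k)s_k=-\frac{Cu^k+d}{\|x_{u^k}\|}\longrightarrow 0,
\]
because the numerator tends to $Cu^0+d$ while $\|x_{u^k}\|\to\infty$. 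By compactness of $\mathbb{S}^{n-1}$ and continuity of $u\mapsto A(u)$, every subsequential limit $s^\ast$ of $(s_k)$ satisfies $A(u^0)s^\ast=0$, hence lies in $\ker A(u^0)\cap\mathbb{S}^{n-1}$; since all subsequential limits lie in this set, $\mathrm{dist}(s_k,\ker A(u^0)\cap\mathbb{S}^{n-1})\to 0$, which is (\ref{5.8}).

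The unboundedness of $D$ is then immediate, since $x_{u^k}\in\mathcal{C}\subset\mathrm{int}(D)$ and $\|x_{u^k}\|\to\infty$, so $D$ contains points of arbitrarily large norm. I do not anticipate a serious obstacle: the argument is essentially continuity plus compactness. The two points needing care are the bookkeeping that $u^0$ is excluded from $\Omega(\mathcal{C})$, so that Theorem~\ref{Theorem3} applies to every control occurring, and the standard subsequence argument turning \emph{``all subsequential limits lie in $\ker A(u^0)\cap\mathbb{S}^{n-1}$''} into convergence to that set.
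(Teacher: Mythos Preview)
Your proposal is correct and follows essentially the same route as the paper's proof: invoke Theorem~\ref{Theorem3} to obtain the control set containing $\mathcal{C}$ in its interior, show $\|x_{u^k}\|\to\infty$ by the same contradiction (a bounded subsequence would force $Cu^0+d\in\operatorname{Im}A(u^0)$), and obtain the directional limit by dividing the equilibrium identity by $\|x_{u^k}\|$ and passing to cluster points on the sphere. Your additional observation that $u^0\notin\Omega(\mathcal{C})$, ensuring the hypotheses of Theorem~\ref{Theorem3} are met at every $u\in\Omega(\mathcal{C})$, is a welcome clarification that the paper leaves implicit.
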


\begin{proof}
By Theorem \ref{Theorem3} there is a control set $D$ containing $\mathcal{C}$
in the interior. In order to show that $D$ is unbounded, we argue similarly as
in the scalar situation in Theorem \ref{Theorem2}.

Let $u^{k}\in\Omega(\mathcal{C})$ converge to $u^{0}$ and assume, by way of
contradiction, that $x_{u^{k}}$ remains bounded, hence we may suppose that
there is $x^{0}\in\mathbb{R}^{n}$ with $x_{u^{k}}\rightarrow x^{0}$. Then the
equalities%
\[
A(u^{k})x_{u^{k}}=-\left[  Cu^{k}+d\right]
\]
lead for $k\rightarrow\infty$ to%
\[
A(u^{0})x_{u^{0}}=-\left[  Cu^{0}+d\right]
\]
contradicting assumption (i). We have shown that $x_{u^{k}}$ becomes unbounded
for $k\rightarrow\infty$. Since $Cu^{k}+d\rightarrow Cu^{0}+d$, we get%
\[
A(u^{k})\frac{x_{u^{k}}}{\left\Vert x_{u^{k}}\right\Vert }=\frac{1}{\left\Vert
x_{u^{k}}\right\Vert }\left(  Cu^{k}+d\right)  \rightarrow0.
\]
On the other hand, every cluster point $y\in\mathbb{R}^{n}$ of the bounded
sequence $\frac{x_{u^{k}}}{\left\Vert x_{u^{k}}\right\Vert }$ satisfies
$\left\Vert y\right\Vert =1$ and (\ref{5.8}) follows.
\end{proof}

Theorem \ref{Theorem_unbounded} sheds some light on the relation between
controllability properties of affine systems and their homogeneous bilinear
parts: By Theorem \ref{Theorem_95} assumption (i) is related to the existence
of a control set of the latter system in $\mathbb{R}^{n}$\textbf{.}

We state the following result concerning closed invariant cones (cf. Remark
\ref{Remark_inv_cone}). This is formulated in the context of semigroup
actions. Denote by $S_{\mathrm{aff}}$ and $S_{\mathrm{\hom}}$ the system
semigroups of the affine and the homogeneous bilinear control systems given by
(\ref{affine}) and (\ref{bilinear_h}), respectively. They correspond to
piecewise constant controls (see Appendix A of \cite{ColK00}). The system
group of the affine control system is given by the semidirect product
$G=H\rtimes\mathbb{R}^{n}$, where $H$ is the system group of the homogenous
bilinear system. The affine group operation is defined by $(g,v)\cdot
(h,w)=(gh,v+gw)$ for all $(g,v),(h,w)\in G$, and the affine action of $G$ on
$\mathbb{R}^{n}$ is given by $(g,v)\cdot w=gw+v$ with $(g,v)\in G$ and
$w\in\mathbb{R}^{n}$ using the linear action of $H$ on $\mathbb{R}^{n}$. A set
$Q\subset\mathbb{R}^{n}$ is invariant under $S_{\mathrm{aff}}$ and
$S_{\mathrm{\hom}}$ if and only if it is invariant for the affine control
system and the homogeneous bilinear control systems, respectively. We get the
following relations between invariance of a closed cone for $S_{\mathrm{aff}}$
and $S_{\mathrm{\hom}}$.

\begin{proposition}
\label{Proposition5.14}Consider an affine control system of the form
(\ref{affine}) and its homogeneous bilinear part (\ref{bilinear_h}), and let
$K$ be a closed cone in $\mathbb{R}^{n}$.

(i) Suppose that $K$ is invariant for the homogeneous bilinear part and
$Cu+d\in K$ for all $u\in\Omega$. Then $K$ is invariant for the affine control system.

(ii) If $K$ is invariant for the affine control system, then it is invariant
for the homogeneous bilinear part.
\end{proposition}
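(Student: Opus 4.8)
The plan is to work from the explicit solution formula $\varphi(t,x_0,u)=\Phi_u(t,0)x_0+\int_0^t\Phi_u(t,s)[Cu(s)+d]\,ds$ and to unwind the definitions of invariance. Invariance of $K$ under $S_{\mathrm{aff}}$ (resp.\ $S_{\mathrm{\hom}}$) means exactly that $\varphi(t,x,u)\in K$ (resp.\ $\Phi_u(t,0)x\in K$) for every $x\in K$, every piecewise constant control $u$, and every $t\ge 0$. Since $S_{\mathrm{aff}}$ is generated by the affine time-$t$ maps for constant controls, and a composition of maps each sending $K$ into $K$ again sends $K$ into $K$, for part (i) it will suffice to verify invariance under a single constant control and then invoke the composition law $(g,v)\cdot(h,w)=(gh,v+gw)$.

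I would treat part (ii) first, since it is the easier direction and uses only that $K$ is a closed cone. Fix $x\in K$, a control $u$, and $t\ge 0$, and set $w:=\int_0^t\Phi_u(t,s)[Cu(s)+d]\,ds$, a vector not depending on any scaling. For every $\lambda>0$ the point $\lambda x$ lies in $K$, so affine invariance gives $\varphi(t,\lambda x,u)=\lambda\,\Phi_u(t,0)x+w\in K$; dividing by $\lambda$ (legitimate since $K$ is a cone) yields $\Phi_u(t,0)x+\lambda^{-1}w\in K$. Letting $\lambda\to\infty$ and using that $K$ is closed produces $\Phi_u(t,0)x\in K$, which is the asserted invariance under the homogeneous part.

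For part (i) I would fix a constant control $u\in\Omega$ and show that the affine time-$t$ map $x\mapsto e^{A(u)t}x+\int_0^t e^{A(u)\sigma}[Cu+d]\,d\sigma$ carries $K$ into itself. The linear part is immediate: $e^{A(u)t}x\in K$ for $x\in K$ by the assumed invariance under the homogeneous part. For the translation part one observes that the integrand $e^{A(u)\sigma}[Cu+d]$ lies in $K$ for every $\sigma\ge 0$, because $Cu+d\in K$ by hypothesis and $e^{A(u)\sigma}K\subseteq K$ again by homogeneous invariance. The decisive point is then to conclude that the integral of this $K$-valued continuous curve, and finally its sum with $e^{A(u)t}x$, still lie in $K$; this is the only place where the cone property alone is insufficient and I would invoke convexity of $K$. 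A closed convex cone is stable under the positive combinations occurring in Riemann sums and under addition, so the integral is a limit of elements of $K$ and hence lies in $K$ by closedness, and the sum of two elements of $K$ lies in $K$. Thus each generator preserves $K$, and by the composition law every element of $S_{\mathrm{aff}}$ preserves $K$, giving invariance of $K$ for the affine system.

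The integral term in part (i) is the main obstacle, and it is genuinely essential rather than technical: without convexity the statement is false. For instance, in $\mathbb{R}^2$ take $A(u)=uI$, so that every cone is invariant under the homogeneous part, and let $d$ be a nonzero vector on one of the coordinate axes; the non-convex closed cone consisting of the two nonnegative coordinate half-axes then satisfies both hypotheses of (i), yet the affine trajectory starting on one axis under the control $u\equiv 0$ leaves $K$ immediately. I would therefore make explicit that $K$ is taken to be a closed \emph{convex} cone in part (i), whereas part (ii) needs only that $K$ be a closed cone.
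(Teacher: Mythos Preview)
Your argument for (ii) is correct and is the same idea as the paper's, only phrased directly rather than by contradiction. The paper supposes $gw\notin K$ for some $g\in S_{\hom}$ and $w\in K$, observes that $\mathrm{dist}(\lambda gw,K)=\lambda\,\mathrm{dist}(gw,K)\to\infty$, and concludes that $g(\lambda w)+v\notin K$ for the particular $v$ with $(g,v)\in S_{\mathrm{aff}}$; you instead scale the initial point, divide, and pass to the limit using closedness. Both proofs use precisely the cone property and closedness of $K$, nothing more.

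For (i) the paper writes only ``Assertion (i) is immediate from the definitions'' and gives no further detail. Your treatment is more careful and, in fact, uncovers a genuine issue: to pass from $e^{A(u)\sigma}[Cu+d]\in K$ for each $\sigma$ to $\int_0^t e^{A(u)\sigma}[Cu+d]\,d\sigma\in K$, and then to add $e^{A(u)t}x\in K$, one needs $K$ closed under addition and positive combinations, i.e.\ convexity. Your counterexample (take $A=0$, $B_1=I$, $c_1=0$, $d=(1,0)^\top$, and $K$ the union of the two nonnegative coordinate half-axes; then the homogeneous flow $e^{ut}I$ preserves every ray, $Cu+d=(1,0)^\top\in K$ for all $u$, yet the affine trajectory from $(0,1)^\top$ with $u\equiv 0$ is $(t,1)^\top\notin K$) is valid and shows that (i), read literally for an arbitrary closed cone, is false. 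So either ``cone'' in the paper's statement should be read as ``convex cone'' for part (i), or your added hypothesis is required; in any case your version of the argument is the correct one.
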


\begin{proof}
Assertion (i) is immediate from the definitions. The assumption in (ii) means
that $(g,v)\cdot w\in K$ for all $(g,v)\in S_{\mathrm{aff}}$ and $w\in K$.
Suppose, by way of contradiction, that there exists $g\in S_{\mathrm{\hom}}$
with $x:=gw\notin K$ for some $w\in K$; hence $g(\lambda w)=\lambda
(gw)=\lambda x\not \in K$ for all $\lambda>0$. It follows that%
\[
\inf\{\left\Vert \lambda x-\lambda w^{\prime}\right\Vert \left\vert w^{\prime
}\in K\right.  \}=\lambda\inf\{\left\Vert x-w^{\prime}\right\Vert \left\vert
w^{\prime}\in K\right.  \}\rightarrow\infty\text{ for }\lambda\rightarrow
\infty.
\]
Hence for every $v\in\mathbb{R}^{n}$ there is $\lambda>0$ such that
$\inf\{\left\Vert g(\lambda w)+v-w^{\prime}\right\Vert \left\vert w^{\prime
}\in K\right.  \}>0$ implying $g(\lambda w)+v\not \in K$. This means for the
action of $S_{\mathrm{aff}}$ that $(g,v)\cdot(\lambda w)=g(\lambda
w)+v\not \in K$ contradicting the invariance of $K$ for $S_{\mathrm{aff}}$.
\end{proof}

\begin{remark}
Jurdjevic and Sallet \cite[Theorem 2]{JurS84} shows that controllability of an
affine control system without fixed points can be guaranteed if its
homogeneous bilinear part is controllable. Furthermore, for $Q\subset
\mathbb{R}^{n}$ let $\mathrm{A}(Q)$ be its affine hull. Suppose that $Q$ is
invariant for the affine control system. Then \cite[Lemma 3]{JurS84} implies
that $\mathrm{A}(Q)$ is invariant for the affine control system and the set
$\{\sum_{i=1}^{p}\left.  \lambda_{i}q_{i}\right\vert q_{i}\in Q,{\lambda_{i}%
}\in\mathbb{R}$ with $\sum_{i=1}^{p}{\lambda}_{i}=0,p\in\mathbb{N}\}$ is
invariant for its homogeneous bilinear part.
\end{remark}

Finally, we illustrate Theorem \ref{Theorem3} and Theorem
\ref{Theorem_unbounded} by discussing the control sets for two affine systems.
Recall that by Theorem \ref{Theorem_95}, the existence of a control $u^{0}%
\in\mathrm{int}(\Omega)$ such that $0$ is an eigenvalue of $A(u^{0})$ is
connected with the existence of an unbounded control set of the bilinear
system $\dot{x}=A(u)x$.

\begin{example}
\label{Example2_RinkMohler_2}Consider again Example \ref{Example2_RinkMohler}.
In order to describe the control sets we first check the controllability rank
condition (\ref{Kalman2}) for $\left\vert u\right\vert \not =\frac{1}{2}$. By
(\ref{x_u})%
\[
B^{\prime}(u)=C+Bx_{u}=\left[
\begin{array}
[c]{c}%
0\\
1
\end{array}
\right]  +\frac{u}{4u^{2}-1}\left[
\begin{array}
[c]{cc}%
2 & 0\\
0 & 2
\end{array}
\right]  \left[
\begin{array}
[c]{c}%
1\\
-2u
\end{array}
\right]  =\frac{1}{4u^{2}-1}\left[
\begin{array}
[c]{c}%
2u\\
-1
\end{array}
\right]  ,
\]
and hence%
\[
(4u^{2}-1)\left[  B^{\prime}(u),A(u)B^{\prime}(u)\right]  =\left[
\begin{array}
[c]{c}%
2u\\
-1
\end{array}
,\left(
\begin{array}
[c]{cc}%
2u & 1\\
1 & 2u
\end{array}
\right)  \left(
\begin{array}
[c]{c}%
2u\\
-1
\end{array}
\right)  \right]  =\left[
\begin{array}
[c]{cc}%
2u & 4u^{2}-1\\
-1 & 0
\end{array}
\right]  .
\]
Thus the rank condition (\ref{Kalman2}) holds in every equilibrium
$(x_{u},y_{u})$ with $\left\vert u\right\vert \not =\frac{1}{2}$.

Next we discuss the control sets for several control ranges given by a compact interval.

- Let $\Omega=[u_{\ast},u^{\ast}]$ with $\frac{1}{2}<u_{\ast}<u^{\ast}$. Then
the set of equilibria is given by the compact subset $\{(x_{u},y_{u}%
)\left\vert u\in\lbrack u_{\ast},u^{\ast}]\right.  \}\subset\mathcal{B}_{3}$.
By Theorem \ref{Theorem3} there is a single control set $D_{3}$ with
$(x_{u},y_{u})\in\mathrm{int}(D_{3})$ for all $u\in(u_{\ast},u^{\ast})$.

- Let $\Omega=[u_{\ast},u^{\ast}]$ with $u_{\ast}<u^{\ast}<-\frac{1}{2}$. Then
the set of equilibria is given by the compact subset $\{(x_{u},y_{u}%
)\left\vert u\in\lbrack u_{\ast},u^{\ast}]\right.  \}\subset\mathcal{B}_{2}$.
By Theorem \ref{Theorem3} there is a single closed control set $D_{2}$ with
$(x_{u},y_{u})\in\mathrm{int}(D_{2})$ for all $u\in(u_{\ast},u^{\ast})$.

- Let $\Omega=[u_{\ast},u^{\ast}]$ with $-\frac{1}{2}<u_{\ast}<u^{\ast}%
<\frac{1}{2}$. Then the set of equilibria is given by the compact subset
$\{(x_{u},y_{u})\left\vert u\in\lbrack u_{\ast},u^{\ast}]\right.
\}\subset\mathcal{B}_{1}$. By Theorem \ref{Theorem3} there is a single control
set $D_{1}$ with $(x_{u},y_{u})\in\mathrm{int}(D_{1})$ for all $u\in(u_{\ast
},u^{\ast})$.

- Let $\Omega=[-1,1]$. Then the connected components of the set $E_{0}$ of
equilibria are
\begin{align*}
\mathcal{C}_{1}  &  =\left\{  (x_{u},y_{u})\left\vert u\in\left(  -\frac{1}%
{2},\frac{1}{2}\right)  \right.  \right\}  ,\quad\mathcal{C}_{2}=\left\{
(x_{u},y_{u})\left\vert u\in\left(  -1,-\frac{1}{2}\right)  \right.  \right\}
,\\
\mathcal{C}_{3}  &  =\left\{  (x_{u},y_{u})\left\vert u\in\left(  \frac{1}%
{2},1\right)  \right.  \right\}  ,
\end{align*}
and there are control sets $D_{i}$ with $\mathcal{C}_{i}\subset\mathrm{int}%
\left(  D_{i}\right)  $ for $i=1,2,3$. Since these sets of equilibria are
unbounded also the control sets are unbounded. Based on Proposition
\ref{Proposition_shift}, a lengthy argument involving the phase portraits for
constant controls shows that one cannot steer the system from $D_{2}$ to
$D_{3}$ or $D_{1}$ and from $D_{1}$ to $D_{3}$, hence these control sets are
pairwise different.
\end{example}

Next we take up the linear oscillator from Example \ref{Example6} and consider
an associated affine control system. We will show that there are two unbounded
control sets.

\begin{example}
\label{Example6_2}Consider the affine control system given by%
\[
\ddot{x}+3\dot{x}+(1+u(t))x=u(t)+d\text{ with }u(t)\in\lbrack-\rho,\rho],
\]
where $\rho\in\left(  1,\frac{5}{4}\right)  $ and $d\in\mathbb{R}$. Hence the
system equation has the form%
\[
\left[
\begin{array}
[c]{c}%
\dot{x}\\
\dot{y}%
\end{array}
\right]  =\left[
\begin{array}
[c]{cc}%
0 & 1\\
-1 & -3
\end{array}
\right]  \left[
\begin{array}
[c]{c}%
x\\
y
\end{array}
\right]  +u(t)\left[
\begin{array}
[c]{cc}%
0 & 0\\
-1 & 0
\end{array}
\right]  \left[
\begin{array}
[c]{c}%
x\\
y
\end{array}
\right]  +u(t)\left[
\begin{array}
[c]{c}%
0\\
1
\end{array}
\right]  +\left[
\begin{array}
[c]{c}%
0\\
d
\end{array}
\right]  .
\]
For the equilibria with $u\not =-1$ we find%
\begin{equation}
\left[
\begin{array}
[c]{c}%
x_{u}\\
y_{u}%
\end{array}
\right]  =-\left[
\begin{array}
[c]{cc}%
0 & 1\\
-1-u & -3
\end{array}
\right]  ^{-1}\left[
\begin{array}
[c]{c}%
0\\
u+d
\end{array}
\right]  =\left[
\begin{array}
[c]{cc}%
\frac{3}{1+u} & \frac{1}{1+u}\\
-1 & 0
\end{array}
\right]  \left[
\begin{array}
[c]{c}%
0\\
u+d
\end{array}
\right]  =\left[
\begin{array}
[c]{c}%
\frac{d+u}{1+u}\\
0
\end{array}
\right]  . \label{equi}%
\end{equation}
This yields that the connected components of the set $E_{0}$ of equilibria are%
\[
\mathcal{C}_{1}=\left\{  \left.  \left[
\begin{array}
[c]{c}%
\frac{d+u}{1+u}\\
0
\end{array}
\right]  \right\vert u\in(-\rho,-1)\right\}  ,\quad\mathcal{C}_{2}=\left\{
\left.  \left[
\begin{array}
[c]{c}%
\frac{d+u}{1+u}\\
0
\end{array}
\right]  \right\vert u\in\left(  -1,\rho\right)  \right\}  .
\]
For $d=1$ there is a single equilibrium given by $(x_{u},y_{u})^{\top
}=(1,0)^{\top}$ for every $u\not =-1$. Henceforth we assume $d\not =1$.

Let $d<1$. Then for $u\in\lbrack-\rho,-1)$ one obtains $d+u<1+u<0$, and for
$u\in(-1,\rho]$ one obtains $1+u>0$, hence%
\[
\mathcal{C}_{1}=\left\{  \left.  \left[
\begin{array}
[c]{c}%
x\\
0
\end{array}
\right]  \right\vert x\in\left(  \frac{d-\rho}{1-\rho},\infty\right)
\right\}  ,\quad\mathcal{C}_{2}=\left\{  \left.  \left[
\begin{array}
[c]{c}%
x\\
0
\end{array}
\right]  \right\vert x\in\left(  -\infty,\frac{d+\rho}{1+\rho}\right)
\right\}  .
\]

Let $d>1$. Then $u\in\lbrack-\rho,-1)$ yields $1+u<0$ and $u\in(-1,\rho]$
yields $1+u>0$, hence%
\[
\mathcal{C}_{1}=\left\{  \left.  \left[
\begin{array}
[c]{c}%
x\\
0
\end{array}
\right]  \right\vert x\in\left(  -\infty,\frac{d-\rho}{1-\rho}\right)
\right\}  ,\quad\mathcal{C}_{2}=\left\{  \left.  \left[
\begin{array}
[c]{c}%
x\\
0
\end{array}
\right]  \right\vert x\in\left(  \frac{d+\rho}{1+\rho},\infty\right)
\right\}  .
\]
Note that $\mathcal{C}_{1}\cap\mathcal{C}_{2}=\varnothing$ for all $d$. The
equilibria in $\mathcal{C}_{1}$ are hyperbolic, since here $\lambda
_{1}(u)<0<\lambda_{2}(u)$ with $\lambda_{2}(u)\rightarrow0$ for $u\rightarrow
-1$. The equilibria in $\mathcal{C}_{2}$ are stable nodes since here
$\lambda_{1}(u)<\lambda_{2}(u)<0$.

Next we check the assumptions of Theorem \ref{Theorem_unbounded}. For
$u^{0}=-1$ the matrix $A(-1)=\left[
\begin{array}
[c]{cc}%
0 & 1\\
0 & -3
\end{array}
\right]  $ has the eigenvalue $\lambda_{0}=0$ with eigenspace $\mathbb{R}%
\times\{0\}$, and $\operatorname{Im}A(-1)=\{(y,-3y)\left\vert y\in
\mathbb{R}\right.  \}$. Furthermore
\[
Cu^{0}+d=\left[
\begin{array}
[c]{c}%
0\\
1
\end{array}
\right]  (-1)+\left[
\begin{array}
[c]{c}%
0\\
d
\end{array}
\right]  =\left[
\begin{array}
[c]{c}%
0\\
d-1
\end{array}
\right]
\]
is not in the range of $A(-1)$. This verifies assumption (i) in Theorem
\ref{Theorem_unbounded}. In order to check the rank condition (\ref{Kalman2})
we compute for $u\not =-1$%
\begin{align*}
B^{\prime}(u)  &  =C+B\left[
\begin{array}
[c]{c}%
x_{u}\\
y_{u}%
\end{array}
\right]  =\left[
\begin{array}
[c]{c}%
0\\
1
\end{array}
\right]  +\left[
\begin{array}
[c]{cc}%
0 & 0\\
-1 & 0
\end{array}
\right]  \left[
\begin{array}
[c]{c}%
\frac{d+u}{1+u}\\
0
\end{array}
\right]  =\left[
\begin{array}
[c]{c}%
0\\
\frac{1-d}{1+u}%
\end{array}
\right]  ,\\
A(u)B^{\prime}(u)  &  =\left[
\begin{array}
[c]{cc}%
0 & 1\\
-1-u & -3
\end{array}
\right]  \left[
\begin{array}
[c]{c}%
0\\
\frac{1-d}{1+u}%
\end{array}
\right]  =\left[
\begin{array}
[c]{c}%
\frac{1-d}{1+u}\\
-3\frac{1-d}{1+u}%
\end{array}
\right]  .
\end{align*}
Hence $\mathrm{rank}\left[  B^{\prime}(u),A(u)B^{\prime}(u)\right]  =2$ for
$u\not =-1$. Theorem \ref{Theorem_unbounded} implies that there are unbounded
control sets $D_{i}$ containing the equilibria in $\mathcal{C}_{i},i=1,2$, in
the interior. For $u^{k}\rightarrow u^{0}=-1$, the equilibria $(x_{u^{k}%
},y_{u^{k}})=(x_{u^{k}},0)$ become unbounded for $k\rightarrow\infty$ and%
\[
\frac{(x_{u^{k}},0)}{\left\Vert (x_{u^{k}},0)\right\Vert }\in\ker
A(-1)\cap\mathbb{S}^{1}=\left\{  \left[
\begin{array}
[c]{c}%
1\\
0
\end{array}
\right]  ,\left[
\begin{array}
[c]{c}%
-1\\
0
\end{array}
\right]  \right\}  \text{ for all }k.
\]
In the simple case considered here, the latter assertion is already clear by
formula (\ref{equi}) for the equilibria.

While the asymptotic stability of the equilibria in $\mathcal{C}_{2}$ implies
that one can steer the system from $\mathcal{C}_{1}$ to $\mathcal{C}_{2}$, the
converse does not hold which follows by inspection of the phase portraits for
the controls in $\left[  -\rho,-1\right]  $ and $\left[  -1,\rho\right]  $. It
follows that $D_{1}\not =D_{2}$.
\end{example}

\textbf{Acknowledgements.} We would like to thank two anonymous reviewers
whose comments helped to improve the paper.


\begin{thebibliography}{99}                                                                                               %


\bibitem {ACKL16}\textsc{V. Ayala, E. Cruz, W. Kliemann, L.R. Laura-Guarachi,}%
\emph{ Controllability properties of bilinear systems in dimension 2}, Journal
of Mathematics and Computer Science, 16 (2016), pp. 554-575.

\bibitem {BacV13}\textsc{A. Bacciotti and J.-C. Vivalda}, \emph{On radial and
directional controllability of bilinear systems}, Systems Control Lett., 62(7)
(2013), pp. 575-580.

\bibitem {Bonn81}\textsc{B. Bonnard, }\emph{Contr\^{o}llabilit\'{e} des
syst\`{e}mes bilin\'{e}aires}, Math. Systems Theory, 15 (1981, pp. 79-92.

\bibitem {BJKS}\textsc{B. Bonnard, V. Jurdjevic, I. Kupka, and G. Sallet,}
\emph{Transitivity of families of invariant vector fields on the semi-direct
product of Lie groups,} Trans. Amer. Math. Soc., 271 (1982), pp. 525-535.

\bibitem {BW}\textsc{W. Boothby and E.N. Wilson, }\emph{Determination of
transitivity of bilinear systems,} SIAM\ J. Control Optim., 17 (1979), pp. 212-221.

\bibitem {BraSM96}\textsc{C.J. Braga Barros and L.A.B. San Martin,} \emph{On
the number of control sets on projective spaces, }Systems Control Lett., 29
(1996), pp. 21-26.

\bibitem {CanS21}\textsc{D. Cannarsa and M. Sigalotti,} \emph{Approximately
controllable finite-dimensional bilinear systems are controllable,} Systems
Control Lett., 157 (2021), Article 105028.

\bibitem {Cassels}\textsc{J.W.S. Cassels, }\emph{Introduction to Diophantine
Approximations}, Cambridge University Press, 1957.

\bibitem {ColK95}\textsc{F. Colonius and W. Kliemann,}\emph{ }
\emph{Asymptotic null controllability of bilinear systems}, in: Nonlinear
Control and Differential Inclusions, B. Jakubczyk and W. Respondek, eds.,
Banach Center Publications Vol. 32, Warsaw, 1995, pp. 139-148.

\bibitem {ColK00}\textsc{F. Colonius and W. Kliemann,}\emph{ The Dynamics of
Control}, Birkh\"{a}user 2000.

\bibitem {DoRoSMS06}\textsc{O. Do Rocio, L.A.B. San Martin, and A. J.
Santana,}\emph{ Invariant cones and convex sets for bilinear control systems
and parabolic type of semigroups,} J. Dynam. Control Systems 12(3) (2006), pp. 419-432.

\bibitem {DoRoSV09}\textsc{O. Do Rocio, A.J. Santana, and M. Verdi,}
\emph{Semigroups of affine groups, controllability of affine systems and
affine bilinear systems in }$Sl(2,\mathbb{R})\rtimes\mathbb{R}^{2}$, SIAM J.
Control Optim. 48(2) (2009), pp. 1080-1088.

\bibitem {Elliott}\textsc{D.L. Elliott,} \emph{Bilinear Control Systems,
Matrices in Action}, Kluwer Academic Publishers, 2008.

\bibitem {Engel}\textsc{R. Engelking,} \emph{General Topology,} PWN - Polish
Scientific Publishers, Warszawa, 1977.

\bibitem {GauB82}\textsc{J.P. Gauthier and G. Bornard,}
\emph{Controllabilit\'{e} des syst\`{e}mes bilin\'{e}aires}, SIAM J. Control
Optim., 20(3) (1982), pp. 377-384.

\bibitem {JurK81}\textsc{V. Jurdjevic and I. Kupka,} \emph{Control systems on
semi-simple Lie groups and their homogeneous spaces, }Annales de l'institut
Fourier, tome 31, no 4 (1981), p. 151-179.

\bibitem {JurS84}\textsc{V. Jurdjevic and G. Sallet}, \emph{Controllability
properties of affine systems,} SIAM J. Control Optim. 22(3) (1984), pp. 501-508.

\bibitem {Jurd97}\textsc{V. Jurdjevic}, \emph{Geometric Control Theory},
Cambridge University Press, 1997.

\bibitem {LeeM67}\textsc{E.B. Lee and L. Markus,}\emph{ Foundations of Optimal
Control Theory,} Robert E. Krieger Publishing Company, Original Edition 1967
Reprint Edition 1986 with corrections.

\bibitem {Mohler}\textsc{R.R. Mohler.} \emph{Bilinear Control Processes},
Academic Press, New York and London, 1973.

\bibitem {RinkM68}\textsc{R.E. Rink and R.R. Mohler,} \emph{Completely
controllable bilinear systems}, SIAM J. Control Optim., 6(3) (1968), pp. 477-
486\textsc{.}

\bibitem {Sach}\textsc{Yu.L.\ Sachkov,} \emph{On invariant orthants of
bilinear systems,} J. Dynam. Control Systems 4(1) (1998), pp. 137-147.

\bibitem {SanM93}\textsc{L. San Martin,} \emph{Invariant control sets on flag
manifolds,} Math. Control Signals Systems 6 (1993), pp.41-61.

\bibitem {Son98}\textsc{E. Sontag}, \emph{Mathematical Control Theory},
Springer-Verlag 1998\emph{.}

\bibitem {Tcheb}\textsc{P.L. Tchebychef,}\emph{ Sur une question
arithm\'{e}tique, }in: \emph{Oeuvres Tome I,} Imprimerie de l'Academie
Imp\'{e}riale des Sciences, St. Petersburg, 1899, pp. 639-684.

\bibitem {Tes}\textsc{G. Teschl,} \emph{Ordinary Differential Equations and
Dynamical Systems,} Graduate Studies in Math. Vol. 149, Amer. Math. Soc., 2012.
\end{thebibliography}
\end{document}